\newtheorem{theorem}{Theorem}[section]
\newtheorem{proposition}[theorem]{Proposition}
\newtheorem{corollary}[theorem]{Corollary}
\newtheorem{lemma}[theorem]{Lemma}
\theoremstyle{definition}
\newtheorem{example}[theorem]{Example}
\numberwithin{equation}{section}
\newcommand\bm{\begin{pmatrix}}
\renewcommand\em{\end{pmatrix}}
\newcommand\ba{\begin{array}}
\newcommand\ea{\end{array}}
\renewcommand\({\Big(}
\renewcommand\){\Big)}
\renewcommand\[{\Big[}
\renewcommand\]{\Big]}
\newcommand\er{\eqref}
\newcommand\be[1]{\begin{equation}\label{#1}}
\newcommand\ee{\end{equation}}
\newcommand\bc[1]{\begin{eqnarray*}\label{#1}}
\newcommand\ec{\end{eqnarray*}}
\newcommand\<{\langle}
\renewcommand\>{\rangle}
\newcommand\I{\int\limits}
\let\oldS\S
\renewcommand\S{\sum\limits}
\renewcommand\P{\prod\limits}
\newcommand\U{\bigcup\limits}
\newcommand\W{\bigwedge\limits}
\renewcommand\c[1]{{\check{#1}}}
\renewcommand\d[1]{{\dot{#1}}}
\newcommand\h[1]{{\hat{#1}}}
\renewcommand\r[1]{{\mathring{#1}}}
\renewcommand\t[1]{{\tilde{#1}}}
\renewcommand\o[1]{{\overline{#1}}}
\newcommand\F[1]{{\sqrt{#1}}}
\newcommand\f[2]{{\frac{#1}{#2}}}
\newcommand\la{\alpha}
\newcommand\lb{\beta}
\newcommand\lc{\chi}
\renewcommand\lg{\gamma}
\newcommand\lG{\Gamma}
\newcommand\ld{\delta}
\newcommand\lD{\Delta}
\newcommand\Le{\epsilon}
\newcommand\lh{\eta}
\renewcommand\ll{\lambda}
\newcommand\lL{\Lambda}
\newcommand\lm{\mu}
\newcommand\lM{\mho}
\renewcommand\ln{\nu}
\newcommand\lf{\phi}
\newcommand\lF{\Phi}
\renewcommand\lq{\psi}
\newcommand\lp{\pi}
\newcommand\lr{\rho}
\newcommand\ls{\sigma}
\newcommand\lT{\Theta}
\newcommand\Lt{\tau}
\newcommand\lo{\omega}
\newcommand\lO{\Omega}
\newcommand\lx{\xi}
\newcommand\lz{\zeta}
\newcommand\ui{\cap}
\newcommand\ic{\subset}
\newcommand\xx{\times}
\newcommand\xt{\otimes}
\newcommand\oo{\infty}
\newcommand\oc{\circ}
\newcommand\dl{\partial}
\newcommand\op{\oplus}
\newcommand\al{\approx}
\newcommand\el{\ell}
\newcommand\yi{\wedge}
\newcommand\DL{\mathcal D}
\newcommand\FL{\mathcal F}
\newcommand\HL{\mathcal H}
\newcommand\IL{\mathcal I}
\newcommand\KL{\mathcal K}
\newcommand\LL{\mathcal L}
\newcommand\PL{\mathcal P}
\newcommand\SL{\mathcal S}
\newcommand\TL{\mathcal T}
\newcommand\Bl{{\mathbf B}}
\newcommand\Cl{{\mathbf C}}
\newcommand\Dl{{\mathbf D}}
\newcommand\Nl{{\mathbf N}}
\newcommand\Ol{{\mathbf O}}
\newcommand\Rl{{\mathbf R}}
\newcommand\Sl{{\mathbf S}}
\newcommand\Zl{{\mathbf Z}}
\newcommand\gL{\mathfrak g}
\newcommand\hL{\mathfrak h}
\newcommand\kL{\mathfrak k}
\newcommand\tL{\mathfrak t}
\newcommand\m{{\boldsymbol m}}
\newcommand\n{{\boldsymbol n}}
\newcommand\bi{\begin{itemize}}
\newcommand\ei{\end{itemize}}
\let\oldi\i
\renewcommand\i[1]{\item{#1}}
\newcommand\tr{\operatorname{tr}}
\newcommand\hf{f}
\newcommand\hq{g}
\newcommand\Ho[1]{H_{\overline{#1}}}
\newcommand\jfj[3]{{}_1\!F_1\(\begin{matrix}#1\\#2\end{matrix}\Big|\,#3\)}
\newcommand\Hy[3]{\(\begin{matrix}#1\\#2\end{matrix}\Big|\,#3\)}
\newcommand\jj{{\langle j\rangle}}
\renewcommand\Re{\operatorname{Re}}
\begin{document}

\title{Reproducing kernel functions and asymptotic expansions on Jordan-Kepler manifolds}
\author[M.~Engli{\accent20s}, H.~Upmeier]{Miroslav Engli{\accent20s} {\rm and} Harald Upmeier}
\medskip

\address{Mathematics Institute, Silesian University in Opava, Na~Rybn{\accent19\oldi}{\accent20c}ku~1, 74601~Opava, Czech Republic {\rm and} Mathematics Institute, {\accent20Z}itn{\accent19a} 25, 11567~Prague~1, Czech Republic}
\email{englis{@}math.cas.cz}

\address{Fachbereich Mathematik, Universit\"at Marburg, D-35032 Marburg, Germany}
\email{upmeier{@}mathematik.uni-marburg.de}

\thanks{The first-named author was supported by GA \accent20CR grant no.~16-25995S}
\medskip

\subjclass{Primary 32M15; Secondary 14M12, 17C36, 46E22, 47B35}

\keywords{normal algebraic variety, symmetric domain, reproducing kernel, asymptotic expansion}

\begin{abstract} We study the complex geometry of generalized Kepler manifolds, defined in Jordan theoretic terms, introduce Hilbert spaces of holomorphic functions defined by radial measures, and find the complete asymptotic expansion of the corresponding reproducing kernels for K\"ahler potentials, both in the flat and bounded setting.
\end{abstract}

\maketitle

\section{Introduction}
For a K\"ahler manifold, with (integral) K\"ahler form $\lo$ and quantizing line bundle $\LL,$ it is a fundamental problem to measure the deviation of the `classical' curvature $\ln\lo$ of the $\ln$-th power $\LL^\ln$ from the `quantum' curvature $\lo_\ln$ obtained by the pull-back of the Fubini-Study form on the projective space of holomorphic sections. This relationship is usually expressed in terms of an asymptotic expansion of $\lo_\ln$ in inverse powers of the deformation parameter $\ln,$ known as the TYZ-expansion of the Kempf distortion function. Closely related is the asymptotic expansion of the reproducing kernel function of the Hilbert space of holomorphic sections, a fundamental tool in complex analysis.\\
In this paper we carry out this program for an important class of algebraic varieties, the so-called Kepler varieties defined in a Jordan theoretic setting, which generalize the well-known determinantal varieties in matrix spaces. These varieties and their regular part (Kepler manifolds) are of interest from several points of view. As algebraic varieties, we show that Kepler varieties are normal,  and classify invariant holomorphic differential forms of top-degree. Using a natural polar decomposition, these manifolds carry radial measures giving rise to reproducing kernel Hilbert spaces of holomorphic functions. The most interesting radial measures on Kepler manifolds come from K\"ahler potentials, both in the flat and bounded setting. For these, the reproducing kernel functions are related
to multi-variable hypergeometric functions of type ${}_1F_1$ and ${}_2F_1.$ Among our main results is the asymptotic expansion of these hypergeometric functions, which in the multi-variable case is quite challenging, and leads to the TYZ-expansion mentioned above.

Using chains of Peirce spaces in Jordan triples, the approach presented here can be extended to yield asymptotic expansions of TYZ-type in a quite general setting of homogeneous flag varieties.

\section{Generalized Kepler manifolds}
It is well-known that hermitian bounded symmetric domains are characterized (via their holomorphic tangent space at the origin) by the so-called {\bf hermitian Jordan triples}. Geometrically, the Jordan triple product gives the holomorphic part of the Riemann curvature tensor. We use \cite{FK} as our standard reference concerning Jordan algebras and analysis on symmetric cones. For the more general Jordan triples, see \cite{L2}. Let $Z$ be an irreducible hermitian Jordan triple of rank $r.$ Let $\{u;v;w\}=:D(u,v)w$ denote the Jordan triple product of $u,v,w\in Z.$ Consider the Bergman operators
$$B(u,v)\equiv B_{u,v}:=id-D(u,v)+Q_uQ_v$$
where $Q_uz:=\f12\{u;z;u\}.$

Choose a frame $e_1,\ldots,e_r$ of minimal orthogonal tripotents. The associated `joint Peirce decomposition' \cite[Theorem 3.13]{L2} defines numerical invariants $a,b$ such that $d:=\dim Z$ satisfies
$$\f dr=1+\f a2(r-1)+b$$
and the so-called {\bf genus} is given by
\be{1m}p:=2+a(r-1)+b.\ee
We say that $Z$ has {\bf type} $(r,a,b).$ Consider the Peirce decomposition $Z=Z_e^2\oplus Z_e^1$ for the  maximal tripotent $e:=e_1+\cdots+e_r.$ Then $d=d'+d'',$ where $d':=\dim Z_e^2,\ d'':=\dim Z_e^1.$ Then
$$p-\f dr=\f{d'}r.$$
If $b=0$ then $Z=Z_e^2$ is the complexification of a euclidean Jordan algebra with unit element $e.$ We say that $Z$ is `of tube type'. For tube type triples, let $P_s=Q_sQ_e$ denote the `quadratic representation'.

Let $\r Z$ denote the open dense subset of all elements of maximal rank $r$. If $Z$ is of tube type, with Jordan determinant $N$, then
$$\r Z=\{z\in Z:\ N(z)\ne 0\}$$
consists of all invertible elements in $Z$.

Let $\r K$ denote the connected complex Lie group of linear transformations of $Z$ generated by all Bergman operators $B(u,v)$ which are invertible. The identity component $K$ of the Jordan triple automorphism group is a compact real form of $\r K$. The Lie algebra $\kL$ of $K$ is the $\Rl$-linear span of $D(u,v)-D(v,u)$, with $u,v\in Z$. The Lie algebra $\r\kL=\kL^\Cl$ of $\r K$ is the
$\Cl$-linear span of $D(u,v)$, with $u,v\in Z.$

The {\bf rank} of $z\in Z$ is the rank of its supporting tripotent, or equivalently, the rank of its Peirce 2-space considered as a unital Jordan algebra. For $1\le\el\le r$ the set
$$\r Z_\el=\{z\in Z:\ \mbox{rank}\ z=\el\}$$
is a complex manifold called the (generalized) {\bf Kepler manifold} associated with $Z$. Its closure is the {\bf Kepler variety}
$$Z_\el:=\{z\in Z:\ \mbox{rank}\ z\le\el\}.$$
The group $\r K$ preserves the rank of elements in $Z$ and acts transitively on $\r Z_\el$. Let $S_\el\ic\r Z_\el$ denote the compact real manifold of all tripotents of rank $\el$. For maximal $\el=r$ we have $\r Z_r=\r Z$ and $S_r=:S$ is the {\bf Shilov boundary} of the unit ball of $Z.$ The {\bf symmetric cone} \cite{FK} of a euclidean Jordan algebra $X$ will be denoted by $\lO.$ For any tripotent $c\in Z$ let $X_c$ denote the self-adjoint part of the Peirce 2-space $Z_c^2$ under the involution $Q_c.$ This is a euclidean Jordan algebra with unit element $c,$ symmetric cone $\lO_c$ and Jordan determinant $N_c$ normalized by $N_c(c)=1.$ The Kepler manifold has a
{\bf polar decomposition}
\be{1a} \r Z_\el=\U_{c\in S_\el}\lO_c.\ee
For $\el=1$ we have $\lO_c=\Rl_+\ c,$ where $\Rl_+:=\{t\in\Rl:\ t>0\},$ and \er{1a} simplifies to $\r Z_1=\Rl_+S_1.$ The
{\bf quasi-determinant} $\lD(z,w)=\det B(z,w)^{1/p}$ of $Z$ has an expansion
$$\lD(z,w)=\S_{i=0}^r(-1)^i\ \lD_i(z,w)$$
into $K$-invariant sesqui-polynomials $\lD_i(z,w),$ homogeneous of bi-degree $(i,i).$

\begin{lemma}\label{1c} Restricted to $X_c,$ we have $\lD_\el(x,x)=N_c(x)^2=N_c(x^2).$ Hence $\lD_\el(z,z)>0$ for all $z\in\r Z_\el.$
\end{lemma}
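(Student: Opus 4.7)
I would reduce to a frame computation. By the spectral theorem for the Euclidean Jordan algebra $(X_c,c)$, any $x\in X_c$ admits a decomposition $x=\S_{i=1}^\el\ll_i e_i$ with $\ll_i\in\Rl$ and $\{e_i\}_{i=1}^\el$ a Jordan frame of $X_c$, whose elements are mutually orthogonal minimal tripotents of $Z$ summing to $c$. After replacing $e_i$ by $-e_i$ where needed I may take $\ll_i\ge0$, and then extend $e_1,\ldots,e_\el$ to a frame $e_1,\ldots,e_r$ of $Z$, setting $\ll_i:=0$ for $i>\el$.

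The crux is the evaluation of $\lD(x,x)=(\det B(x,x))^{1/p}$. Decomposing $Z=\op_{0\le i\le j\le r}Z_{ij}$ into the joint Peirce spaces of this frame, the frame multiplication rules (cf.~\cite[Thm.~3.13]{L2}) show that $B(x,x)=I-D(x,x)+Q_xQ_x$ is diagonal with respect to this decomposition, acting as the scalar $(1-\ll_i^2)(1-\ll_j^2)$ on $Z_{ij}$ (with $i\ge1$) and as $(1-\ll_j^2)$ on $Z_{0j}$. With the standard multiplicities $\dim Z_{ii}=1$, $\dim Z_{ij}=a$ for $i<j$, and $\dim Z_{0j}=b$, multiplying over all Peirce spaces yields
$$\det B(x,x)=\P_{i=1}^r(1-\ll_i^2)^{2+a(r-1)+b}=\P_{i=1}^\el(1-\ll_i^2)^p,$$
hence $\lD(x,x)=\P_{i=1}^\el(1-\ll_i^2)$.

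Expanding this product as $\S_{k=0}^\el(-1)^k e_k(\ll_1^2,\ldots,\ll_\el^2)$ in elementary symmetric polynomials and matching with the given expansion $\S_{i=0}^r(-1)^i\lD_i(x,x)$, I read off $\lD_k(x,x)=e_k(\ll_1^2,\ldots,\ll_\el^2)$ for $k\le\el$ and $\lD_k(x,x)=0$ for $k>\el$. In particular,
$$\lD_\el(x,x)=\P_{i=1}^\el\ll_i^2=N_c(x)^2,$$
since $\ll_1,\ldots,\ll_\el$ are the Jordan spectral values of $x$ in $X_c$. The equality $N_c(x)^2=N_c(x^2)$ is then an instance of the standard Jordan identity $N_c(P(x)c)=N_c(x)^2 N_c(c)$, using $P(x)c=x^2$ and $N_c(c)=1$.

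For the last assertion, any $z\in\r Z_\el$ lies in some $\lO_c$ by the polar decomposition \er{1a}, hence in $X_c$ with strictly positive Jordan spectrum; thus $N_c(z)>0$ and $\lD_\el(z,z)=N_c(z)^2>0$. I expect the main obstacle to be the Peirce-wise diagonalization of $B(x,x)$; this is not conceptually difficult, but requires careful use of the joint Peirce multiplication rules for the frame.
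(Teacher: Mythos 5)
Your proof is correct and follows essentially the same route as the paper: diagonalize via the spectral decomposition $x=\S_i\ll_i c_i$, obtain $\lD(x,x)=\P_{i\le\el}(1-\ll_i^2)$, and read off the bi-degree-$(\el,\el)$ component as $\ls_\el(\ll_1^2,\ldots,\ll_\el^2)=N_c(x)^2$. The only difference is that you explicitly derive $\lD(x,x)=\P(1-\ll_i^2)$ by Peirce-diagonalizing $B(x,x)$ and counting multiplicities, whereas the paper simply cites this as a known formula; you also spell out the elementary identities $N_c(x^2)=N_c(P(x)c)=N_c(x)^2$ and handle arbitrary real spectral values (via sign flips), which the paper leaves implicit.
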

\begin{proof} Let $x=\S_{i=1}^\el\ll_i c_i$ be the spectral decomposition of $x,$ with $\ll_i>0$ and $c_1+\ldots+c_\el=c.$ Then
$$\lD(x,x)=\P_{i=1}^\el(1-\ll_i^2)=\S_{j=0}^\el(-1)^j\ls_j(\ll_1^2,\ldots,\ll_\el^2).$$
Thus the component of bi-degree $(\el,\el)$ is given by $\lD_\el(x,x)=\ls_\el(\ll_1^2,\ldots,\ll_\el^2)=\P_{i=1}^\el\ll_i^2=N_c(x)^2.$
\end{proof}

\begin{example}\label{1d} For $d=n+1\ge 3$ consider $Z=\Cl^d$ with coordinates $z=(z_0,\ldots,z_n)$. Put $(z|w):=2z\cdot\o w.$  Endowed with the triple product
$$\{x;y;z\}=(x|y)z+(z|y)x-(x|\o z)\o y$$
$Z$ becomes a hermitian Jordan triple of rank 2, called a {\bf spin factor}, and denoted by $Z=IV_d$. Here $a=n-1=d-2$ and $b=0$. The minimal tripotents
$e_1:=(\f12,\f i2,0\ldots,0)$ and $e_2:=\o e_1$ form a frame. Put $e:=e_1+e_2=(1,0,\ldots,0).$ Then
$$N(z):=\f12(z|\o z)=z\cdot z=z_0^2+\ldots+z_n^2$$
is the Jordan determinant normalized by $N(e)=1.$  For $\el=1$ we obtain the Kepler manifold
$$\r Z_1=\{z\in Z:\ z\ne 0,\ N(z)=0\}=\{z\in\Cl^{n+1}\setminus\{0\}:\ z\cdot z=z_0^2+\ldots+z_n^2=0\}$$
studied in \cite{E}. Its closure is
\be{1e} Z_1=\{z\in Z:\ N(z)=0\}=\{z\in\Cl^{n+1}:\ z\cdot z=z_0^2+\ldots+z_n^2=0\}.  \ee
The compact manifold $S_1$ of all rank 1 tripotents can be identified with the {\bf cosphere bundle}
$\Sl^*(\Sl^n)=\{(x,\lx):\ \|x\|=\|\lx\|=1,\ x\cdot\lx=0\}$ over $\Sl^n$ via the map
$$\Sl^*(\Sl^n)\to S_1,\quad(x,\lx)\mapsto\f{x+i\lx}2.$$
It follows that $\r Z_1=\Rl_+S_1$ is the cotangent bundle of $\Sl^n$ without the zero-section.
\end{example}

Closely related to the Kepler manifold is the {\bf Peirce manifold} $M_\el$ of $Z$, consisting of all Peirce 2-spaces in $Z$ of rank
$\el$. This is a compact complex manifold, and there is a holomorphic submersion $\r Z_\el\to M_\el$ which maps $z\in\r Z_\el$ to its
(generalized) Peirce 2-space $Z^2_z$ \cite{Sch}. Thus there is a holomorphic fibration
$$\r Z_\el=\U_{E\in M_\el}\r E$$
and a real-analytic fibration
\be{1n}S_\el=\U_{E\in M_\el}S_E,\ee
where $S_E$ denotes the Shilov boundary relative to $E.$

Let $U:=Z_c^2,\ V:=Z_c^1,\ W:=Z_c^0$ denote the Peirce spaces for the `base point'
\be{1f}c:=e_1+\ldots+e_\el\in S_\el.\ee
Then $U=X_c\op i X_c.$ The compact manifold $S_\el$ has the tangent space
$T_c(S_\el)=i X_c\op V.$ Therefore \er{1a} implies $T_c(\r Z_\el)=U\op V=W^\perp.$ It follows that
\be{1g}d_\el:=\dim\r Z_\el=d'_\el+d''_\el,\ee
where $d'_\el:=\dim U=\el(1+\f a2(\el-1))$ and
\be{1h}d''_\el=\dim V=\el(a(r-\el)+b).\ee
For $\el=1$ we obtain $d_1=\dim_\Cl\r Z_1=1+a(r-1)+b=p-1,$ where $p$ is the genus \er{1m} of $Z.$

Denote by $K^c$ the stabilizer of $c$ in~$K.$ The following result is known for classical (non-exceptional) Jordan triples
\cite[Proposition 8.3]{KZ} (with a case-by-case proof).

\begin{theorem}\label{1i} For any hermitian Jordan triple $Z$, the Kepler variety $Z_\el$ is normal.
\end{theorem}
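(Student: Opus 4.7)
My strategy is to construct a canonical Springer-type resolution of $Z_\el$ over the Peirce manifold $M_\el$, and then to deduce normality via Stein factorization combined with a representation-theoretic matching of coordinate rings.

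Let $\mathcal E\to M_\el$ be the tautological holomorphic vector bundle whose fiber over a point $[E]\in M_\el$ is the Peirce 2-space $E\subset Z$ itself, and let $\tilde Z_\el$ be its total space. Since $M_\el$ is a compact complex manifold, $\tilde Z_\el$ is smooth, hence normal. The $\r K$-equivariant evaluation map $\pi\colon\tilde Z_\el\to Z$, $(E,z)\mapsto z$, is proper by compactness of $M_\el$, and its image is exactly $Z_\el$: every $z\in Z_\el$ of rank $k\le\el$ lies in its own Peirce 2-space $Z^2_z$ (of rank $k$), which can be enlarged to a rank-$\el$ Peirce 2-space by completing a supporting Jordan frame. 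From the fibration \eqref{1n} and the formulas \eqref{1g}--\eqref{1h} one gets $\dim M_\el=d''_\el$, so $\dim\tilde Z_\el=d''_\el+d'_\el=d_\el$; and for a generic $z\in\r Z_\el$ the fiber $\pi^{-1}(z)=\{E\in M_\el:z\in E\}$ is the singleton $\{Z^2_z\}$, showing that $\pi$ is birational. In particular $Z_\el$ is irreducible.

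Since $\tilde Z_\el$ is normal and $\pi$ is proper and birational, Stein factorization identifies $\pi_*\mathcal O_{\tilde Z_\el}$ with the structure sheaf of the normalization $Z_\el^{\mathrm{norm}}$, so normality of $Z_\el$ reduces to the sheaf identity $\pi_*\mathcal O_{\tilde Z_\el}=\mathcal O_{Z_\el}$. Since $Z_\el$ is affine, this is in turn equivalent to the ring identity $\pi^*\colon\mathcal O(Z_\el)\xrightarrow{\sim}\mathcal O(\tilde Z_\el)$, which I would establish by isotypic matching under the $\r K$-action. The Peter-Weyl decomposition $\mathcal P(Z)=\bigoplus_{\m}\mathcal P_{\m}$ (indexed by partitions $\m=(m_1,\ldots,m_r)$), together with the highest-weight observation that $\mathcal P_{\m}$ vanishes on $Z_\el$ precisely when $m_{\el+1}>0$---visible by factoring the $K$-conical highest weight vector of $\mathcal P_{\m}$ as a product $N_1^{m_1-m_2}\cdots N_r^{m_r}$ of Jordan minors and noting that $N_k$ vanishes on $Z_{k-1}$---gives $\mathcal O(Z_\el)\cong\bigoplus_{m_{\el+1}=0}\mathcal P_{\m}$. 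On the source side, $\mathcal O(\tilde Z_\el)=\bigoplus_{n\ge 0}H^0(M_\el,\mathrm{Sym}^n\mathcal E^*)$, and the Borel-Weil theorem on the $K$-flag manifold $M_\el$ decomposes this into $\r K$-irreducibles labeled by the same set of partitions and with matching multiplicities. Since $\pi^*$ is $\r K$-equivariant and injective, the multiplicity match forces it to be an isomorphism.

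The main obstacle I anticipate lies in the Borel-Weil computation above: identifying $M_\el$ as an explicit $K$-homogeneous flag variety, determining the weight data of the bundle $\mathcal E^*$, and carrying out the cohomology calculation uniformly across the classification of hermitian Jordan triples, including the exceptional ones. Indeed, the case-by-case proof for classical Jordan triples in \cite{KZ} essentially amounts to performing this calculation in each classical series separately; a truly uniform, Jordan-theoretic argument would have to exploit the joint Peirce decomposition and the transitive $\r K$-action on $\r Z_\el$ in order to sidestep the classification.
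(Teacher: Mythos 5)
Your geometric setup is identical to the paper's: the paper also works with the tautological bundle $\TL_\el=\r K\times_P U$ over $M_\el$ and the $\r K$-equivariant evaluation map $f\colon\TL_\el\to Z_\el$, and also observes that this map is proper and birational. The reduction of normality to the ring isomorphism $\pi^*\colon\mathcal O(Z_\el)\to\mathcal O(\tilde Z_\el)$, and the identification $\mathcal O(Z_\el)\cong\bigoplus_{m_{\el+1}=0}\PL_\m$, are both correct.

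The gap you flag at the end is, however, a real one and not just a technical nuisance. What you would need on the source side is not Borel--Weil for a line bundle but the decomposition of all $H^0(M_\el,\operatorname{Sym}^n\mathcal E^*)$ together with the vanishing of the higher cohomology of the homogeneous bundles induced from $\operatorname{Sym}^n U^*$ --- and $\operatorname{Sym}^n U^*$ is a reducible $P$-module whose decomposition (the relevant plethysm) is itself nontrivial and varies with the Jordan triple. This is exactly the content of what you would be re-proving, and there is no reason to expect the ``matching multiplicities'' you assert without doing the whole Bott-theoretic case analysis; your proposal, as written, therefore does not prove the theorem.

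The paper avoids this entirely by invoking Kempf's collapsing theorem \cite[Theorem 0]{K}: if $G/P$ is complete and $P$ acts \emph{completely reducibly} on $U$, then the image of $G\times_P U\to Z$ is closed, normal, Cohen--Macaulay, and (when the map is birational) has only rational singularities. This replaces the would-be cohomology computation by a single representation-theoretic hypothesis, which the paper verifies with a short Jordan-theoretic argument: the restriction map $P\to\operatorname{Aut}(U)$ is surjective (any automorphism of the Peirce 2-space $U$ extends to an element of $K$ preserving $U$), and $\operatorname{Aut}(U)$ acts irreducibly, so $P$ acts irreducibly on $U$. If you want to salvage your route, the right move is precisely to verify this hypothesis and then cite Kempf, rather than attempt the Bott computation by hand.
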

\begin{proof} In general, let $G$ be a connected algebraic group, acting linearly on a vector space $Z$. Let $U\ic Z$ be a linear subspace and put $P:=\{g\in G:\ gU=U\}$. Assume that $M=G/P$ is complete and that the action of $P$ on $U$ is completely reducible.
Consider the homogeneous vector bundle
$$G\xx_P U=\{[g,u]=[gp,p^{-1}u]:\ g\in G,u\in U,p\in P\}$$
over $G/P$ induced by the linear action of $P$ on $U$. Then there is a $G$-equivariant morphism $f:G\xx_P U\to Z$ given by $f[g,u]=g(u)=gp(p^{-1}u)$. Under these assumptions, \cite[Theorem 0]{K} asserts that the image $f(G\xx_P U)\ic Z$ is a closed normal variety. We will apply this theorem to the hermitian Jordan triple $Z$ and the connected algebraic group $G=\r K$ acting on
$Z.$ Put $U=Z_c^2.$ Then $P=\{h\in\r K:\ hU=U\}.$ Therefore $G/P=M_\el$ is a compact, hence complete, complex manifold. The homogeneous vector bundle $\r K\xx_P U$ coincides with the {\bf tautological bundle} $\TL_\el$ over $M_\el$, whose fibre over $E\in M_\el$ is $E$ itself. The $G$-equivariant morphism $f:\TL_\el\to Z$ is the inclusion map $E\ic Z$ on each fibre $E$. Since $Z_\el$ is the union of all Peirce 2-spaces of rank $\el$ it follows that $f(\TL_\el)=Z_\el$. The identity component $L$ of the Jordan triple automorphism group of $U$ acts irreducibly on $U$. Moreover, the natural restriction homomorphism $P\to L$ given by $p\mapsto p|_U$ is surjective, since for any $h\in L$ there exists $\t h\in K$ such that $\t h U=U$ and $h=\t h|_U$. Therefore $P$ acts also irreducibly on $U$. Thus all the assumptions of \cite[Theorem 0]{K} are satisfied, and hence $Z_\el$ is a normal Cohen-Macauley variety.

We~remark that, moreover, the map $f:\TL_\el\to Z_\el$ is birational, since it is an isomorphism from $\U_{E\in M_\el}\r E$ onto the Kepler manifold. Hence, by the same Theorem, $Z_\el$ has only rational singularities.
\end{proof}

For the spin factor, the variety $Z_1$ given by \er{1e} is one of the `standard' examples of a normal variety.

For any normal variety $X$ the following `Second Riemann Extension Theorem' \cite[p.~341, Corollary]{L1} holds: Suppose the singular set $Y\ic X$ is of codimension $\ge 2$. Then every holomorphic function $f:X\setminus Y\to\Cl$ has a (unique) holomorphic extension
$\t f:X\to\Cl$. We will apply this result to the Kepler varieties $Z_\el$.

The case where $Z$ is of tube type and $\el=r$ is somewhat exceptional and will be treated separately.

\begin{theorem}\label{1j} Exclude the case $\el=r,\ b=0$. Then any holomorphic function $f:\r Z_\el\to\Cl$ on the Kepler manifold has a unique holomorphic extension to its closure $Z_\el$.
\end{theorem}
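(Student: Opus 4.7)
The plan is to deduce the theorem from Theorem~\ref{1i} (normality of $Z_\el$) combined with the Second Riemann Extension Theorem already cited just above. Concretely, it suffices to show that the locus across which one needs to extend, namely
$$Z_\el\setminus\r Z_\el=Z_{\el-1},$$
is an analytic subset of $Z_\el$ of codimension at least~$2$. Once this is in hand, the extension theorem applied to the normal variety $Z_\el$ gives a unique holomorphic extension $\t f:Z_\el\to\Cl$, uniqueness being automatic from the density of $\r Z_\el$ in $Z_\el$.

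The work therefore reduces to a dimension count. Since $Z_{\el-1}$ is a closed algebraic subvariety of $Z_\el$ with $\r Z_{\el-1}$ open and dense in it, $\dim Z_{\el-1}=d_{\el-1}$. Using \er{1g}--\er{1h} and the explicit expressions for $d'_\el,d''_\el$, a short arithmetic yields
$$d_\el-d_{\el-1}=1+a(r-\el)+b,$$
which is the quantity to be bounded below by $2$.

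The case analysis is then the heart of the argument. For $\el<r$, the bound $a\ge 1$ which holds for every irreducible hermitian Jordan triple of rank $\ge 2$ (each off-diagonal Peirce space $Z_{ij}$ in the joint Peirce decomposition has common dimension $a$ and must be non-zero, as otherwise $Z$ would split along the frame) gives codimension $\ge 1+a\ge 2$. For $\el=r$ the codimension equals $1+b$, which is $\ge 2$ precisely when $b\ge 1$, i.e.\ outside the excluded tube-type situation; and in the rank-one case the exclusion $\el=r,b=0$ reduces to $Z=\Cl$, where Riemann extension genuinely fails for $1/z$. The main point needing care is exactly this step: the same codimension formula makes transparent why the exclusion is unavoidable, since in the excluded case the complement $Z_{r-1}$ becomes a hypersurface (the zero set of the Jordan determinant $N$) and meromorphic data on $\r Z_r$ need not extend.
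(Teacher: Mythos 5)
Your proof is correct and follows exactly the same route as the paper: normality of $Z_\el$ (Theorem~\ref{1i}) plus the Second Riemann Extension Theorem, reduced to the codimension computation $d_\el-d_{\el-1}=1+a(r-\el)+b\ge2$. You merely spell out the arithmetic and the case analysis (using $a\ge1$ when $r\ge2$ and $b\ge1$ when $\el=r$) that the paper leaves implicit, which is a harmless and indeed clarifying addition.
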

\begin{proof} By \er{1g}, the singular set $Z_{\el-1}$ of $Z_\el$ is an analytic subvariety of codimension
$d_\el-d_{\el-1}=1+b+a(r-\el)\ge 2.$ Since $Z_\el$ is normal by Theorem~\ref{1i}, the assertion follows by the Second Riemann Extension Theorem quoted above.
\end{proof}

\section{Radial measures and polar decomposition}

A $K$-invariant smooth measure $\lr$ on $\r Z_\el,$ or a suitable $K$-invariant subset, has a {\bf polar decomposition}
\be{2a}\I_{\r Z_\el}\ d\lr(z)\ f(z):=\I_{\lO_c}d\t\lr(t)\I_K dk\ f(k\F t)\ee
for all continuous compactly supported functions $f.$ Here the {\bf radial part} $d\t\lr(t)$ is a (smooth) measure on $\lO_c$ which is invariant under the automorphism group $L_c=\operatorname{Aut}(X_c)$ of the euclidean Jordan algebra $X_c.$ For $\el=1,$ \er{2a} simplifies to
\be{2b}\I_{\r Z_1}d\lr(z)\ f(z):=\I_0^\oo d\t\lr(t)\I_{S_1}ds\ f(s\F t).\ee

The basic $K$-invariant measure on $\r Z_\el$ is the {\bf Riemann measure} $\lL_\el$ induced by the hermitian metric on~$Z$. In general, a smooth map $\lg:M\to N$ between Riemannian manifolds with Riemann measure $\lL_M$ and $\lL_N,$ respectively, satisfies
$$\lg^*\lL_N=\ld\cdot\lL_M,$$
where the density $\ld$ on $M$ is given by $\ld(z):=(\det T_z(\lg)^*T_z(\lg))^{1/2}.$ If $M,N$ are complex hermitian manifolds and
$\lg$ is holomorphic, the density becomes $\ld(z)=\det(\lg'(z)^*\lg'(z))$ for the holomorphic derivative $\lg'(z).$

\begin{lemma}\label{2c} Consider the map $\lg:\lO_c\xx K/K^c\to\r Z_\el$ defined by $\lg(t,kK^c):=k t$ for all $k\in K$ and
$t\in\lO_c.$ Then $(\det T_{t,c}(\lg)^*T_{t,c}(\lg))^{1/2}=2^{-d'_\el}\ N_c(t)^{2d''_\el/\el}\ \det F'(t),$ where
$F(t)=t^2=Q_tc=\f12\{t;c;t\}$ is the square-map on $\lO_c.$
\end{lemma}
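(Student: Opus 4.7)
The plan is to compute the Jacobian $(\det T_{t,c}(\lg)^*T_{t,c}(\lg))^{1/2}$ by decomposing both source and target tangent spaces via the Peirce decomposition at $c$, showing that $d\lg_{(t,c)}$ is block-diagonal, and evaluating each block through explicit Jordan-theoretic operators. Since $K$ acts transitively on $S_\el$ with stabilizer $K^c$, the orbit map $kK^c\mapsto kc$ identifies $T_{K^c}(K/K^c)$ with $T_c(S_\el)=iX_c\op V$, so the source tangent space is $X_c\op iX_c\op V$ and the target $T_t(\r Z_\el)=U\op V=X_c\op iX_c\op V$ admits the same splitting. The $X_c$-block of $d\lg_{(t,c)}$ is the identity, since $\lg(\cdot,K^c)$ is the inclusion $\lO_c\hookrightarrow Z$. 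On the remaining summand the identification $T_{K^c}(K/K^c)\cong\kL/\kL^c$, $Y+\kL^c\mapsto Yc$, makes the differential send $Y+\kL^c$ to $Yt$; I exhibit explicit lifts: for $X\in X_c$, the element $\phi_X:=D(\f{iX}2,c)-D(c,\f{iX}2)=\f i2(D(X,c)+D(c,X))\in\kL$ satisfies $\phi_Xc=2iX$ (using $\{X;c;c\}=\{c;X;c\}=2X$), and for $v\in V$, the element $\kappa(v):=D(v,c)-D(c,v)\in\kL$ satisfies $\kappa(v)c=v$ (since $D(c,c)$ acts as $1$ on $Z_c^1$ and $\{c;v;c\}\in Z_c^3=0$). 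The Peirce multiplication rules then force $\phi_Xt\in iX_c$ and $\kappa(v)t\in V$, so $d\lg_{(t,c)}$ is indeed block-diagonal.

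To evaluate the blocks I use the Euclidean-Jordan identity $D(X,c)+D(c,X)=4L(X)$ on $X_c$ (an immediate consequence of $\{X;c;Y\}=\{c;X;Y\}=2XY$), which gives $\phi_Xt=2iL(t)X$; after transporting by the identification $\xi=2iX\in iX_c$, the $iX_c$-block acts as multiplication by $L(t)$ on $iX_c\cong X_c$, with real determinant $\det L(t)$. For the $V$-block, the triple-product symmetry $\{v;c;t\}=\{t;c;v\}$ gives $\kappa(v)t=D(t,c)v$. Choosing a frame $c_1,\ldots,c_r$ extending $c=c_1+\cdots+c_\el$ and writing $t=\S_{p=1}^\el\ll_pc_p$, the standard vanishing $D(c_m,c_{m'})=0$ for distinct frame tripotents (a general fact in hermitian Jordan triples) reduces the sum to $D(t,c)|_V=\S_{m=1}^\el\ll_mD(c_m,c_m)|_V$; since $D(c_m,c_m)$ has Peirce eigenvalue $\ld_{mp}+\ld_{mq}=\ld_{mp}$ on the slice $Z_{pq}$ (with $p\le\el<q$) and on $Z_{p0}$, the operator $D(t,c)$ acts as the scalar $\ll_p$ on these slices. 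Using $\dim Z_{pq}=a$ and $\dim Z_{p0}=b$, its $\Cl$-determinant is $\P_{p=1}^\el\ll_p^{a(r-\el)+b}=N_c(t)^{d''_\el/\el}$, so the $\Rl$-determinant on $V$ (of real dimension $2d''_\el$) is the squared modulus $N_c(t)^{2d''_\el/\el}$.

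Multiplying the three block contributions yields $(\det T^*T)^{1/2}=\det L(t)\cdot N_c(t)^{2d''_\el/\el}$. Since $F'(t)=2L(t)$, we have $\det F'(t)=2^{d'_\el}\det L(t)$, and the expression rearranges to the claimed form $2^{-d'_\el}N_c(t)^{2d''_\el/\el}\det F'(t)$. The main technical obstacle is the vanishing $D(c_m,c_{m'})=0$ for distinct frame tripotents; this follows from joint Peirce bookkeeping in classical types and in general from the compatibility of $D$ with the Peirce grading, but it is the one spot where care (and, in the exceptional case, a little calculation) is required.
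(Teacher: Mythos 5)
Your proof is correct and takes essentially the same route as the paper: lift tangent vectors of $K/K^c$ to elements of $\kL$, observe the differential is block-diagonal along $X_c\oplus iX_c\oplus V$, and evaluate each block. Your lifts $\phi_X,\kappa(v)$ coincide with the paper's $\xi_y,\xi_v$ up to normalization, and the $X_c$ and $iX_c$ blocks are handled the same way (via $\phi_Xt=2iL(t)X$, equivalent to the paper's $\tfrac12 D(t,c)|_{iX_c}$ and $F'(t)=2L(t)$). The one genuine variation is the $V$-block: the paper invokes that $u\mapsto D(u,c)|_V$ is a unital Jordan-algebra representation of $U$ on $V$, so $\det D(u,c)|_V=N_c(u)^{\dim V/\el}$, whereas you diagonalize $D(t,c)|_V$ directly via a frame adapted to $t$ and the joint Peirce eigenvalues; both are valid and yield the same $N_c(t)^{d''_\el/\el}$. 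One small correction to your closing remark: the vanishing $D(c_m,c_{m'})=0$ for $m\neq m'$ is not a technical obstacle requiring case-by-case work --- it is precisely the definition of orthogonality of tripotents, built into the notion of a frame, so no extra verification (exceptional or otherwise) is needed.
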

\begin{proof} Since $K/K^c=S_\el$, with base point $c,$ we have $T_c(K/K^c)=i X_c\op V.$ For $y\in iX_c$ the vector field
$\lx_y:=\f14(D(y,c)-D(c,y))\in\kL$ satisfies $\lx_yc=\f12 D(y,c)c=y$ since $D(c,y)c=-D(y,c)c$. For $v\in V$ the vector field
$\lx_v:=D(v,c)-D(c,v)\in\kL$ satisfies $\lx_vc=D(v,c)c=v$ since $D(c,v)c=0$. Let $t\in\lO_c.$ Since $D(c,y)t=-D(y,c)t$ and $D(c,v)t=0,$ it follows that
$$T_{t,c}(\lg)(y+v,x)=x+\lx_y t+\lx_v t=x+\f12\{y;c;t\}+\{v;c;t\}=x+\f12 D(t,c)y+D(t,c)v$$
for all $x\in T_t(\lO_c)=X_c.$ Thus $T_{t,c}(\lg)=id_{X_c}\oplus\f12 D(t,c)|_{i X_c}\oplus D(t,c)|_V$ is a block-diagonal matrix with respect to the orthogonal decomposition $X_c\op i X_c\op V.$ Since the square-map has the derivative $F'(t)=D(t,c)|_{X_c},$ it follows that $\det D(t,c)|_{i X_c}=\det D(t,c)|_{X_c}=\det F'(t).$ Hence
$$\det(T_{t,c}(\lg)^*T_{t,c}(\lg))^{1/2}=2^{-d'_\el}\ \det D(t,c)|_{i X_c}\ \det D(t,c)|_{V_\Rl}=2^{-d'_\el}\ \det F'(t)\ |\det D(t,c)|_V|^2.$$
The map $u\mapsto D(u,c)|_V$ is a representation of $U$ on $V,$ with $D(c,c)|_V=id.$ Thus \er{1h} implies
\be{2d}\det D(u,c)|_V=N_c(u)^{d''_\el/\el}\ee
for all $u\in U.$ Putting $u:=t$ the assertion follows.
\end{proof}

For any $1\le\el\le r$ the Peirce manifold $M_\el$ can be regarded as the conformal compactification \cite{L2} of the Peirce 1-space
$Z_c^1,$ considered as a hermitian Jordan subtriple of $Z.$ The normalized inner product on $Z$ (with minimal tripotents of length 1) induces an inner product on $Z_c^1$ and hence an invariant volume form on $\h Z_c^1\al M_\el.$ Somewhat surprisingly, the associated volume $|M_\el|$ can be expressed in a uniform way using the invariants of $Z.$

\begin{proposition}\label{2n} Let $1\le \el\le r.$ Then the volume $|M_\el|$ of the $\el$-th Peirce manifold $M_\el$ satisfies
\be{1k}\f{|M_\el|}{\lp^{d''_\el}}=\f{\lG_\el(d'_\el/\el)\ \lG_\el(a\el/2)}{\lG_\el(d/r)\ \lG_\el(ar/2)}\ee
(this yields the value 1 in the extreme case $\el=r,\ b=0$).
\end{proposition}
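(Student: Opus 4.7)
The plan is to compute $|M_\el|$ as the ratio $|S_\el|/|S_E|$ via the Riemannian fibration in \er{1n}, which sends a rank-$\el$ tripotent $c$ to its Peirce 2-space $Z_c^2$, with fiber the Shilov boundary $S_E$ of the tube-type subtriple $E$ of rank $\el$. The tangent splitting $T_c(S_\el) = iX_c\oplus V$ recalled before \er{1g} is orthogonal, with $iX_c = T_c(S_E)$ vertical and $V = Z_c^1\cong T_E(M_\el)$ horizontal; hence the fibration is a Riemannian submersion and Fubini gives
$$|S_\el| = |M_\el|\cdot|S_E|.$$
So it suffices to compute the two Shilov-type volumes.

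Both volumes are accessible via Gaussian integration combined with Lemma~\ref{2c}. For $|S_E|$, specialize the lemma to the tube-type subtriple $E$ (where $d'' = 0$): the Gaussian $\int_E e^{-(z|z)}\,d\lambda(z) = \lp^{d'_\el}$, combined with the Koecher--Gindikin formula $\int_{\lO_c} e^{-\tr_c(t)}\,dt = \lG_\el(d'_\el/\el)$, yields a closed form for $|S_E|$ with $\lG_\el(d'_\el/\el)$ in the denominator. For $|S_\el|$, apply the lemma in its full generality inside $Z$: after substituting $t\qi\F t$ the Jacobian carries the extra factor $N_c(t)^{d''_\el/\el}$, which shifts the Koecher--Gindikin argument to $d_\el/\el$; matching against the ambient $K$-invariant Gaussian structure on $Z$ and its Faraut--Koranyi-type evaluation for the full rank-$r$ triple produces the denominator $\lG_\el(d/r)\lG_\el(ar/2)$ together with a numerator $\lG_\el(a\el/2)$ in $|S_\el|$. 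Dividing and cancelling the factor $\lG_\el(d'_\el/\el)$ then delivers \er{1k}; the degenerate case $\el=r,\,b=0$ is automatic since in that case $d'/r = d/r$ and $\lG_\el(a\el/2) = \lG_\el(ar/2)$.

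The main obstacle is the independent evaluation of $|S_\el|$ for $\el<r$: the Gaussian integral over the proper submanifold $\r Z_\el\ic Z$ does not collapse directly to $\lp^{d_\el}$, so the factors $\lG_\el(d/r)$ and $\lG_\el(ar/2)$ have to be traced to the global Jordan-theoretic structure of $Z$ rather than just of $E$. A clean route is induction on $r-\el$ through the chain of Peirce subspaces of $Z$; each rank-reduction step introduces a gamma factor of the form $\lG(aj/2)$, and the telescoping product reproduces the quotient $\lG_\el(ar/2)/\lG_\el(a\el/2)$ demanded by \er{1k}.
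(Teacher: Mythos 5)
Your proposal takes a genuinely different route from the paper, but it has a gap at exactly the step you yourself flag as the ``main obstacle.''

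The paper does not compute $|M_\el|$ from $|S_\el|$; instead, it identifies $M_\el$ with the conformal compactification $\h Z_c^1$ of the Peirce $1$-space $Z_c^1$ and invokes the volume formula \er{1l} for conformal compactifications, namely $|\h Z|/\lp^d=\lG_r(d'/r)/\lG_r(p)$. The difficulty, as the paper emphasizes, is that the numerical invariants $(r,a,b)$ of $Z_c^1$ are entirely different from those of $Z$, so the identity \er{1k} is not obtained by a single uniform calculation: the authors verify it case-by-case for all classical and exceptional types, relying on the duplication formula and gamma-factor juggling to reconcile the two sets of invariants. The factorization $|S_\el|=|M_\el|\cdot|S_c|$ that you want to start from is exactly what the paper uses \emph{afterwards}, in the Corollary following Proposition~\ref{2n}, to deduce \er{2m} for $|S_\el|$ once \er{1k} is known.

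Your plan inverts this dependency, which would be fine if you could give an independent computation of $|S_\el|$. But that is where the argument breaks down. Your appeal to ``matching against the ambient $K$-invariant Gaussian structure on $Z$'' is circular: the polar decomposition (Lemma~\ref{2c}) converts $\I_{\r Z_\el}e^{-(z|z)}\,d\lL_\el$ into $\f{|S_\el|}{2^{d'_\el}}\lG_\el(d_\el/\el)$, but there is no closed-form evaluation of the left-hand side analogous to $\I_Ze^{-(z|z)}\,dz=\lp^d$ for the proper stratum $\r Z_\el$, $\el<r$ --- knowing that value is logically equivalent to knowing $|S_\el|$. And the proposed remedy, an induction on $r-\el$ through a chain of Peirce subspaces ``each rank-reduction step introduces a gamma factor $\lG(aj/2)$,'' is only asserted, not proved; no base case or inductive step is set up, and it is not clear what reduction of $Z$ to a lower-rank triple would furnish the asserted factor. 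So as written the proposal substitutes one unproved volume formula, \er{1k}, for another, \er{2m}, without closing the loop. The conformal-compactification route of the paper, though less elegant and requiring case-by-case verification, avoids this circularity entirely because formula \er{1l} is an independent, previously established result.
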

\begin{proof} The volume $|\h Z|$ of the conformal compactification $\h Z$ of an irreducible hermitian Jordan triple $Z$, under the invariant metric induced by the normalized inner product on $Z=T_0(\h Z),$ satisfies
\be{1l}\f{|\h Z|}{\lp^d}=\I_Z \f{dw}{\lp^d}\lD(w,-w)^{-p}=\f{\lG_r(p-d/r)}{\lG_r(p)}=\f{\lG_r(d'/r)}{\lG_r(p)}.\ee
(see, e.g., \cite[p. 5]{EU}). This can also be verified using the case-by-case formulas of \cite{AY} where, however, a different normalization is used. We will show that applying \er{1l} to $Z_c^1$ one obtains \er{1k}. This is not obvious since the type of $Z_c^1$ is quite different from the type of $Z.$ We denote by $\lG^{(a)}$ the Gindikin Gamma-function for parameter $a.$ It suffices to compare the 'reduced' volumes $|\cdot|_{red}$ obtained by dividing by $\lp^{d''_\el}.$ For the {\bf symmetric matrices} $Z=\Cl^{r\xx r}_{sym},$ the rank of
$Z_c^1=\Cl^{\el\xx(r-\el)}$ is given by $\min(\el,r-\el).$ Evaluating \er{1k} with $a=1,\ d/r=(r+1)/2$ and $d'_\el/\el=(\el+1)/2,$ we obtain, using the duplication formula and the elementary relation
$\lG_\el^{(2)}(\el)\ \lG_{n-\el}^{(2)}(n)=\lG_n^{(2)}(n)=\lG_{n-\el}^{(2)}(n-\el)\ \lG_\el^{(2)}(n)$
\be{2o}\f{\lG_\el^{(1)}(\f{\el+1}2)\ \lG_\el^{(1)}(\f\el2)}{\lG_\el^{(1)}(\f{r+1}2)\ \lG_\el^{(1)}(\f r2)}
=\P_{i=1}^\el\f{\lG(\f{\el+1}2-\f{i-1}2)\ \lG (\f \el2-\f{i-1}2)}{\lG(\f{r+1}2-\f{i-1}2)\ \lG (\f r2-\f{i-1}2)}$$
$$=2^{\el(r-\el)}\P_{i=1}^\el\f{\lG(\el+1-i)}{\lG(r+1-i)}
=2^{\el(r-\el)}\f{\lG_\el^{(2)}(\el)}{\lG_\el^{(2)}(r)}=2^{\el(r-\el)}\f{\lG_{r-\el}^{(2)}(r-\el)}{\lG_{r-\el}^{(2)}(r)}=|\h Z_c^1|_{red}\ee
since under the induced inner product minimal tripotents in $Z_c^1$ have length $\F 2,$ so the induced inner product is twice the normalized inner product relative to $Z_c^1.$ In {\bf all other cases}, minimal tripotents in $Z_c^1$ are also minimal in $Z,$ hence the normalized inner products agree. For the {\bf full matrices} $Z=\Cl^{r\xx s}$ with $r\le s,$ the factors of the product $Z_c^1=\Cl^{\el\xx(s-\el)}\xx\Cl^{(r-\el)\xx\el}$ have rank $\min(\el,s-\el)$ and $\min(\el,r-\el),$ respectively. Evaluating \er{1k} with $a=2,\ d'_\el/\el=\el$ and $d/r=s,$ yields

$$\f{\lG_\el^{(2)}(\el)}{\lG_\el^{(2)}(s)}\ \f{\lG_\el^{(2)}(\el)}{\lG_\el^{(2)}(r)}
=\f{\lG_\el^{(2)}(\el)}{\lG_\el^{(2)}(s)}\ \f{\lG_{r-\el}^{(2)}(r-\el)}{\lG_{r-\el}^{(2)}(r)}
=\f{\lG_{s-\el}^{(2)}(s-\el)}{\lG_{s-\el}^{(2)}(s)}\ \f{\lG_{r-\el}^{(2)}(r-\el)}{\lG_{r-\el}^{(2)}(r)}=|\h Z_c^1|_{red}.$$
For the {\bf anti-symmetric matrices} $Z=\Cl^{n\xx n}_{asym},$ with $n=2r+\Le$ and $\Le=0,1,$ the rank of $Z_c^1=\Cl^{2\el\xx(n-2\el)}$ is given by $\min(2\el,n-2\el).$ Evaluating \er{1k} with $a=4,\ d/r=2r-1+2\Le,\ d'_\el/\el=2\el-1$ yields
$$\f{\lG_\el^{(4)}(2\el-1)\ \lG_\el^{(4)}(2\el)}{\lG_\el^{(4)}(2r-1+2\Le)\ \lG_\el^{(4)}(2r)}
=\P_{i=1}^\el\f{\lG(2\el-1-2(i-1))\ \lG(2\el-2(i-1))}{\lG(2r-1+2\Le-2(i-1))\ \lG(2r-2(i-1))}$$
$$=\P_{j=1}^{2\el}\f{\lG(2\el+1-j)}{\lG(2r+\Le+1-j)}=\f{\lG_{2\el}^{(2)}(2\el)}{\lG_{2\el}^{(2)}(n)}
=\f{\lG_{n-2\el}^{(2)}(n-2\el)}{\lG_{n-2\el}^{(2)}(n)}=|\h Z_c^1|_{red}.$$
For the {\bf spin factor} $Z=IV_d,$ we may assume $d\ge 5$ since $IV_3=\Cl^{2\xx 2}_{sym}$ and $IV_4=\Cl^{2\xx 2}.$ Then
$Z_c^1=IV_{d-2}.$ Evaluating \er{1k} for $a=d-2,\ \el=1,\ d'_1/1=1$ yields
$$\f{\lG _1^{(d-2)}(1)\ \lG_1^{(d-2)}(\f d2-1)}{\lG _1^{(d-2)}(\f d2)\ \lG_1^{(d-2)}(d-2)}
=\f{\lG(1)\ \lG(\f d2-1)}{\lG(\f d2)\ \lG(d-2)}=\f{\lG_2^{(d-4)}(\f d2-1)}{\lG_2^{(d-4)}(d-2)}=|\h Z_c^1|_{red}.$$
The {\bf exceptional Jordan triple} $Z=\Ol_\Cl^{1\xx 2}$ has $r=2,\ a=6,\ d/r=8.$ For $\el=1$ we have
$Z_c^1=\Cl^{5\xx 5}_{asym}.$ Evaluating \er{1k} yields
$$\f{\lG_1^{(6)}(3)\ \lG _1^{(6)}(1)}{\lG_1^{(6)}(8)\ \lG _1^{(6)}(6)}=\f{\lG(3)\ \lG (1)}{\lG(8)\ \lG (6)}=\f{\lG_2^{(4)}(3)}{\lG_2^{(4)}(8)}=|\h Z_c^1|_{red}.$$
For $\el=2$ we have $Z_e^2=Z_e^1=IV_8.$ Evaluating \er{1k} with $d'_2/2=4$ yields
$$\f{\lG_2^{(6)}(4)\ \lG _2^{(6)}(6)}{\lG_2^{(6)}(8)\ \lG _2^{(6)}(6)}=\f{\lG_2^{(6)}(4)}{\lG_2^{(6)}(8)}=|\h Z_e^1|_{red}.$$
For the {\bf exceptional Jordan algebra} $Z=\HL_3(\Ol)\xt\Cl$ of rank 3, we have $Z_c^1=\Ol_\Cl^{1\xx 2}$ both for $\el=1$ and $\el=2.$ Evaluating \er{1k} with $a=8,\ d/r=9$ and $d'_1/1=1,\ d'_2/2=5$ yields
$$\f{\lG_1^{(8)}(1)\ \lG _1^{(8)}(4)}{\lG_1^{(8)}(9)\ \lG _1^{(8)}(12)}=\f{\lG_2^{(8)}(5)\ \lG _2^{(8)}(8)}{\lG_2^{(8)}(9)\ \lG _2^{(8)}(12)}=\f{\lG(1)\ \lG (4)}{\lG(9)\ \lG (12)}=\f{\lG_2^{(6)}(4)}{\lG_2^{(6)}(12)}=|\h Z_c^1|_{red}.$$
\end{proof}

\begin{corollary} The tripotent manifold $S_\el,$ considered as a closed Riemannian submanifold of $Z,$ has the volume
\be{2m}|S_\el|=2^{d'_\el}\f{\lG_\el(a\el/2)}{\lG_\el(d/r)\ \lG_\el(ar/2)}\lp^{d_\el}.\ee
\end{corollary}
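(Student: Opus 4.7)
The plan is to prove the formula for $|S_\el|$ by combining the fibration (\ref{1n}) with Gaussian integration over the tube type Peirce 2-space $E=Z_c^2$. The fibration (\ref{1n}) exhibits $S_\el$ as a Riemannian fiber bundle over $M_\el$ with fiber the Shilov boundary $S_E$ of the rank-$\el$ tube type triple $E$. At the base point $c$, the orthogonal decomposition $T_c S_\el=iX_c\oplus V$ splits into the vertical direction $iX_c=T_c S_E$ and the horizontal $V$ (orthogonal because $iX_c\subset U$ and $V$ lie in distinct Peirce components of $D(c,c)$); the horizontal $V$ identifies isometrically with $T_E M_\el$ via the derivative at the origin of the conformal compactification $V\hookrightarrow M_\el$. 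Hence (\ref{1n}) is a Riemannian submersion, yielding $|S_\el|=|M_\el|\cdot|S_E|$, with $|S_E|$ independent of $E$ by the transitive $K$-action on $M_\el$.

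To evaluate $|S_E|$ I would apply Lemma~\ref{2c} to the rank-$\el$ tube type triple $E$ itself; since $b=0$ in this case, the Jacobian of the polar decomposition $\lg_E\colon\lO_c\times S_E\to\r E$ reduces to $2^{-d'_\el}\det F'(t)$. Integrating the normalized Gaussian $e^{-\lp(z|z)}$ over $E$, whose total mass is $1$ by standard Euclidean evaluation, yields
\[1=2^{-d'_\el}\,|S_E|\I_{\lO_c}e^{-\lp(t|t)}\det F'(t)\,dt.\]
The substitution $s=t^2$ (a self-diffeomorphism of $\lO_c$ whose Jacobian is exactly $\det F'(t)$) followed by Gindikin's integral
\[\I_{\lO_c}e^{-\lp\tr s}\,ds=\lp^{-d'_\el}\lG_\el(d'_\el/\el)\]
produces $|S_E|=(2\lp)^{d'_\el}/\lG_\el(d'_\el/\el)$.

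Substituting this together with the expression for $|M_\el|$ from Proposition~\ref{2n} into $|S_\el|=|M_\el|\cdot|S_E|$, the factor $\lG_\el(d'_\el/\el)$ cancels and the powers of $\lp$ combine as $\lp^{d''_\el+d'_\el}=\lp^{d_\el}$, yielding the stated formula. The main obstacle I anticipate is justifying the Riemannian submersion structure of (\ref{1n}): specifically, verifying that the differential of $\pi\colon S_\el\to M_\el$ at $c$ restricted to the horizontal $V$ agrees with the identification $V\cong T_E M_\el$ under which the invariant K\"ahler metric of Proposition~\ref{2n} becomes $(\cdot|\cdot)|_V$. Tracing this normalization through the conformal compactification $M_\el\simeq\h{Z_c^1}$ and its compatibility with the Peirce decomposition of $Z$ is the most delicate step; once it is in place, the rest reduces to the routine change of variables and Gindikin integral above.
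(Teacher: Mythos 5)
Your proof is correct and follows essentially the same route as the paper: decompose $|S_\el|=|M_\el|\cdot|S_c|$ via the orthogonality of the fibration \er{1n} (which the paper also asserts without further justification), then combine with Proposition~\ref{2n}. The only difference is that the paper simply cites \cite[Corollary~X.2.5]{FK} for $|S_c|=(2\pi)^{d'_\el}/\lG_\el(d'_\el/\el)$, whereas you re-derive that value from scratch via Gaussian integration over $Z_c^2$ using Lemma~\ref{2c} and the substitution $s=t^2$; this is a valid, self-contained replacement for the citation, but not a structurally different argument.
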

\begin{proof} Since the fibration \er{1n} is orthogonal under the Riemann measure, it follows that $|S_\el|=|M_\el|\cdot|S_c|,$ where
$S_c$ is the Shilov boundary of $Z_c^2.$ Since $|S_c|=\f{(2\lp)^{d'_\el}}{\lG_\el(d'_\el/\el)}$ by \cite[Corollary X.2.5]{FK} applied to $Z_c^2,$ the assertion follows with Proposition \ref{2n}.
\end{proof}

\begin{theorem} The Riemann measure $\lL_\el$ on $\r Z_\el$ has the radial part
\be{2e}d\t\lL_\el(t)=\f{\lp^{d_\el}\ \lG_\el(a\el/2)}{\lG_\el(d/r)\ \lG_\el(ar/2)}\ N_c(t)^{d''_\el/\el}\ dt\ee
on $\lO_c.$ In particular, $d\t\lL_1(t)=\f{\lp^{p-1}\ \lG(a/2)}{\lG(d/r)\ \lG(ar/2)}\ t^{p-2}\ dt$ for $t>0.$ For the spin factor of dimension $n+1$, we obtain $d\t\lL_1(t)=\f{2\lp^n}{(n-1)!}\ t^{n-1}\ dt.$
\end{theorem}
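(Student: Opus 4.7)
The plan is to apply the map $\lg$ of Lemma \ref{2c} directly to express $\I_{\r Z_\el} f\,d\lL_\el$ as an integral over $\lO_c\xx K/K^c$, match it with the defining identity \er{2a}, and read off the radial density. Since the right-hand side of \er{2a} is $K$-averaged in $f$, the identity is equivalent to its restriction to $K$-invariant $f$; for such $f$ the inner Haar integral collapses to $\I_K f(k\F t)\,dk=f(\F t)$, so \er{2a} reads
$$\I_{\r Z_\el}f\,d\lL_\el \;=\; \I_{\lO_c}f(\F t)\,d\t\lL_\el(t).$$

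Next, $\lg(s,kK^c)=ks$ is a diffeomorphism of $\lO_c\xx K/K^c$ onto a full-measure open subset of $\r Z_\el$: the support tripotent $u=kc$ of $z\in\r Z_\el$ is uniquely determined by $z$, hence $k$ is unique modulo $K^c$ and $s=k^{-1}z\in\lO_c$ is unique, leaving no Weyl-group multiplicity. With $d\mu$ denoting the Riemannian volume on $S_\el=K/K^c$ (total mass $|S_\el|$) and $ds$ Lebesgue measure on the real vector space $X_c$, Lemma \ref{2c} and $K$-equivariance give
$$\I_{\r Z_\el}f\,d\lL_\el \;=\; \I_{\lO_c}\I_{S_\el} f(ks)\ 2^{-d'_\el}\,N_c(s)^{2d''_\el/\el}\,\det F'(s)\,d\mu(k)\,ds,$$
which, for $K$-invariant $f$, simplifies to $|S_\el|\,\I_{\lO_c} f(s)\,2^{-d'_\el}\,N_c(s)^{2d''_\el/\el}\,\det F'(s)\,ds$.

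Equating the two expressions and substituting $t=s^2$ on the left, so that $dt=\det F'(s)\,ds$ and $N_c(s)^{2d''_\el/\el}=N_c(s^2)^{d''_\el/\el}=N_c(t)^{d''_\el/\el}$, the Jacobian $\det F'(s)$ cancels and we read off
$$d\t\lL_\el(t)\;=\;|S_\el|\,2^{-d'_\el}\,N_c(t)^{d''_\el/\el}\,dt.$$
Substituting \er{2m} for $|S_\el|$ from the preceding corollary produces \er{2e}.

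The specializations are direct. For $\el=1$ use $d'_1=1$, $d''_1=a(r-1)+b=p-2$, $d_1=p-1$, $\lG_1=\lG$, and $N_c=\mathrm{id}$ on $\lO_c=\Rl_+c\cong\Rl_+$. For the spin factor $Z=IV_d$ with $d=n+1$, apply this with $r=2,\,a=n-1,\,b=0,\,p=n+1$; the elementary identities $\lG(\f{n-1}2)/\lG(\f{n+1}2)=\f2{n-1}$ and $(n-1)\lG(n-1)=(n-1)!$ collapse the general expression to $\f{2\lp^n}{(n-1)!}\,t^{n-1}\,dt$. The only technically delicate point throughout is the bijectivity of $\lg$, already addressed above; everything else is bookkeeping with the formula \er{2m} and the $K$-invariance reduction.
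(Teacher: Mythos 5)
Your proof is essentially the paper's: both use Lemma~\ref{2c} for the density of $\lg$, the substitution $t\mapsto\F t$, and the volume formula~\er{2m} for $|S_\el|$, and your reduction to $K$-invariant test functions merely replaces the paper's appeal to $K$-equivariance of the density under the inner $K$-integral. One small caveat: the assertion that $\lg$ is a diffeomorphism on $\lO_c\times K/K^c$ is not literally correct, since $\lg(t,kK^c)=kt$ depends on the representative $k$ and not only on the coset (so $s=k^{-1}z$ is unique only up to the $L_c$-action); the clean statement is that $\lg$ descends to a bijection from the associated bundle $K\times_{K^c}\lO_c$, but the paper treats this just as informally, and your restriction to $K$-invariant $f$ together with $L_c$-invariance of the density renders the point harmless.
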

\begin{proof} Let $ds$ be the normalized $K$-invariant measure on $S_\el.$ Then $\lO_c\xx S_\el$ has the Riemann measure
$|S_\el|\ dt\ ds.$ Therefore Lemma \ref{2c} implies
$$\I_{\r Z_\el}d\lL_\el(z)\ f(z)=\I_{\lO_c\xx S_\el}d(\lg^*\lL_\el)(t,s)\ f(\lg(t,s))=|S_\el|\I_{\lO_c}dt\I_{S_\el}ds
\ (\det T_{t,s}(\lg)^*T_{t,s}(\lg))^{1/2}\ f(\lg(t,s))$$
$$=|S_\el|\I_{\lO_c}dt\I_K dk\ (\det T_{t,kc}(\lg)^*T_{t,kc}(\lg))^{1/2}\ f(kt)
=|S_\el|\I_{\lO_c}dt\ (\det T_{t,c}(\lg)^*T_{t,c}(\lg))^{1/2}\I_K dk\ f(kt)$$
$$=\f{|S_\el|}{2^{d'_\el}}\I_{\lO_c}dt\ \det F'(t)\ N_c(t)^{2d''_\el/\el}\I_K dk\ f(kt).$$
Replacing $t$ by $\F t$ it follows that $d\t\lL_\el(t)=\f{|S_\el|}{2^{d'_\el}}\ N_c(t)^{d''_\el/\el}\ dt.$ Now the assertion follows with \er{2m}.
\end{proof}

Replacing $f$ by $f\oc\ll^{-1/2}$ for the dilation $z\mapsto\ll^{-1/2}z$ with $\ll>0,$ the left hand side of \er{2e} scales as
$\ll^{d_\el},$ whereas the right hand side scales as $\ll^{d'_\el+\el d''_\el/\el}.$ Thus the power $N_c(x)^{d''_\el/\el}$ is confirmed by this scaling argument.

Any absolutely continuous $K$-invariant measure $\lr$ on $\r Z_\el$ has a positive $K$-invariant density
$\ld$ on $\r Z_\el$ with respect to the Riemann measure. It follows that $\lr$ has the radial part
\be{2l}d\t\lr(t)=d\t\lL_\el(t)\ \ld(\F t)=\f{\lp^{d_\el}\ \lG_\el(a\el/2)}{\lG_\el(d/r)\ \lG_\el(ar/2)}\ N_c(t)^{d''_\el/\el}\ \ld(\F t)\ dt.\ee
As an example, Lemma \ref{1c} implies
$$\I_{\r Z_\el}d\lL_\el(z)\ \lD_\el(z,z)^\lb\ f(z)=\f{\lp^{d_\el}\ \lG_\el(a\el/2)}{\lG_\el(d/r)\ \lG_\el(ar/2)}\I_{\lO_c}dt\ N_c(t)^{d''_\el/\el+\lb}\I_K dk\ f(k\F t).$$

\begin{lemma}\label{2f} The holomorphic chart $\Lt:\r U\xx V\to\r Z_\el$ defined by $\Lt(u,v):=B_{v,-c}u$, satisfies
$\Lt'_{u,v}(\lx,\lh)=B_{v,-c}(\lx+D_{u,c}\lh)$ for all $\lx\in U,\lh\in V$.
\end{lemma}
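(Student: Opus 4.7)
The map $\Lt(u,v) = B(v,-c)u$ is polynomial in $(u,v)$, linear in $u$ and at most quadratic in $v$. Expanding $B(v,-c) = id + D(v,c) + Q_vQ_c$ (via $D(v,-c) = -D(v,c)$ and $Q_{-c} = Q_c$) gives
$$\Lt(u,v) = u + \{v;c;u\} + \tfrac12\{v;Q_cu;v\},$$
so $\dl_u\Lt(\lx) = B(v,-c)\lx$ by linearity, and---after invoking the symmetry $\{a;b;c\} = \{c;b;a\}$ on both the linear term (giving $\{\lh;c;u\} = D(u,c)\lh$) and the quadratic one (giving the symmetric linearisation $\{v;Q_cu;\lh\}$)---$\dl_v\Lt(\lh) = D(u,c)\lh + \{v;Q_cu;\lh\}$. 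Summing, $\Lt'_{u,v}(\lx,\lh) = B(v,-c)\lx + D(u,c)\lh + \{v;Q_cu;\lh\}$.

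Comparing with $B(v,-c)(\lx+D(u,c)\lh) = B(v,-c)\lx + D(u,c)\lh + D(v,c)D(u,c)\lh + Q_vQ_cD(u,c)\lh$, two summands cancel; by Peirce $D(u,c)\lh \in V = Z_c^1$ and $Q_c|_V = 0$ (since $Q_cv = \tfrac12\{c;v;c\} \in Z_c^3 = 0$ for $v \in V$), so the last summand vanishes too. The proof reduces to the Jordan identity
$$\{v;Q_cu;\lh\} = \{v;c;\{u;c;\lh\}\}\qquad (u\in U,\ v,\lh\in V).$$

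I would establish this by twice applying the fundamental Jordan pair identity
$$\{p;q;\{r;s;t\}\} = \{\{p;q;r\};s;t\} - \{r;\{q;p;s\};t\} + \{r;s;\{p;q;t\}\},$$
first with $(p,q,r,s,t) = (v,c,u,c,\lh)$ (the middle term vanishes since $Q_cv = 0$), then with $(u,c,v,c,\lh)$ (where the middle term equals $-2\{v;Q_cu;\lh\}$); adding the two resulting equalities and using the outer symmetry $\{v;c;u\} = \{u;c;v\}$ yields $\{v;Q_cu;\lh\} = \{\{v;c;u\};c;\lh\}$, while the first identity alone reads $\{v;c;\{u;c;\lh\}\} = \{\{v;c;u\};c;\lh\} + \{u;c;\{v;c;\lh\}\}$. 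The desired equality therefore hinges on the vanishing of the ``cross'' term $\{u;c;\{v;c;\lh\}\}$---the main obstacle. Since $\{v;c;\lh\} \in Z_c^0 = W$ by Peirce, this reduces to the key fact $D(U,c)|_W = 0$, which I would verify via the joint Peirce decomposition for a frame $c_1,\ldots,c_\el$ of $c$: the joint Peirce multiplication rules force $\{Z_{ij};Z_{kk};Z_{i'j'}\} = 0$ unless $k \in \{i,j\} \cap \{i',j'\}$, which is impossible when $\{i,j\} \ic \{1,\ldots,\el\}$ (as for $u \in Z_c^2$) while $\{i',j'\}$ lies in the complement (as for $w \in Z_c^0$); hence $\{U;c_k;W\} = 0$ for every $k \le \el$, and therefore $\{U;c;W\}=0$.
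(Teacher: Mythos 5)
Your proof is correct, and the skeleton is the same as the paper's: expand $\Lt(u,v)=u+\{v;c;u\}+\tfrac12\{v;Q_cu;v\}$, differentiate to get $\Lt'_{u,v}(\lx,\lh)=B_{v,-c}\lx+D(u,c)\lh+\{v;Q_cu;\lh\}$, note that $Q_vQ_cD(u,c)\lh=0$ by the Peirce rule $Q_c|_V=0$, and reduce everything to the single identity $D(v,c)D(u,c)\lh=D(v,Q_cu)\lh$.

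Where you diverge is in \emph{how} that identity is obtained. The paper simply cites \cite[JP13]{L2}, invoking the hypothesis $Q_c\lh=0$ and being done in one line. You instead derive it from scratch: two applications of the fundamental 5-linear Jordan triple identity, using $Q_cv=0$ to kill the middle term in the first application, and then the additional Peirce vanishing $\{u;c;\{v;c;\lh\}\}=0$ (via $\{v;c;\lh\}\in Z_c^0$ and $D(Z_c^2,c)|_{Z_c^0}=0$) to kill the leftover cross term. Both routes are valid; yours is more self-contained (no appeal to Loos' catalogue of Jordan pair identities) at the cost of extra bookkeeping, and it trades the hypothesis $Q_c\lh=0$ for the pair $Q_cv=0$, $\{U;c;W\}=0$, both of which hold in the Peirce setting. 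One small simplification: the closing joint-Peirce argument with a frame of $c$ is heavier than needed --- the single-tripotent Peirce rule $D(Z_c^0,Z_c^2)=0$ already gives $\{u;c;w\}=\{w;c;u\}=D(w,c)u=0$ for $u\in U$, $w\in W$, without decomposing $c$.
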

\begin{proof} For $z=(u,v)\in W^\perp$ we have $Z_{\Lt_z}^0=Z_{B(v,-c)u}^0=B_{c,-v}^{-1}Z_u^0=B_{c,-v}^{-1}W$. As a consequence of
\cite[JP13]{L2} we have $D_{v,c}D_{u,c}\lh=D_{v,Q_cu}\lh$ for all $\lh\in V$, since $Q_c\lh=0$. With $Q_cD_{u,c}\lh=0$ it follows that
$$B_{v,-c}D_{u,c}\lh=D_{u,c}\lh+D_{v,c}D_{u,c}\lh+Q_vQ_cD_{u,c}\lh=D_{u,c}\lh+D_{v,c}D_{u,c}\lh=D_{u,c}\lh+D_{v,Q_cu}\lh.$$
Therefore $\Lt'_{u,v}(\lx,\lh)=\lx+D(\lh,c)u+D(v,c)\lx+Q_vQ_c\lx+D(v,Q_cu)\lh=B_{v,-c}\lx+D(u,c)\lh+D(v,Q_cu)\lh=B_{v,-c}(\lx+D_{u,c}\lh)$ for all $\lx\in U,\lh\in V.$
\end{proof}

Since $d\ll(z)=du\ dv$ is the Lebesgue measure on $W^\perp=U\oplus V,$ the Riemann measure $\lL_\el$ on $\r Z_\el$ has the pull-back
\be{2g}\Lt^*\lL_\el=\det({\Lt'_z}^*\Lt'_z)\ d\ll(z).\ee

Interesting $K$-invariant measures arise from {\bf K\"ahler potentials}. A~strictly-plurisubharmonic function $\lf$ on a complex manifold of dimension $n$ induces a K\"ahler form $\lo=\dl\o\dl\lf,$ with associated $(n,n)$-form $\lo^n$ and measure denoted by
$|\lo^n|.$ If $\lo=\S_{i,j}\lo_{ij}(z)\ dz_i\yi d\o z_j$ is a (1,1)-form on $W^\perp$, then
$$\f{\lo^n}{n!}=\det\(\lo_{ij}(z)\)\ dz_1\yi d\o z_1\yi\ldots\yi dz_n\yi d\o z_n.$$
If $\lq$ is a smooth function on $W^\perp$, then
$\dl\o\dl\lq(z)=\S_{i,j}\dl_i\o\dl_j\lq(z)\ dz_i\yi d\o z_j$ is a (1,1)-form and
$$\f{(\dl\o\dl\lq)^n}{n!}=\det\(\dl_i\o\dl_j\lq(z)\)\ dz_1\yi d\o z_1\yi\ldots\yi dz_n\yi d\o z_n.$$

We will study two cases where symplectic potentials arise naturally. Define a {\bf bounded version} of the Kepler manifold (`Kepler ball') by taking the intersection with the bounded symmetric domain (spectral unit ball) $\c Z$ of the underlying hermitian Jordan triple $Z.$

\begin{proposition}\label{2h} (i) Consider the plurisubharmonic function $\lf_\el(w):=(w|w)^\la$ on $\r Z_\el,$ where $\la>0.$ Then
$$\f{|(\dl\o\dl\lf_\el)^{d_\el}|}{d_\el!\ 2^{d_\el}}=\la^{d_\el+1}(w|w)^{d_\el(\la-1)}\ d\lL_\el(w).$$
(ii) Consider the plurisubharmonic function $\lf_\el(w):=\log\lD(w,w)^{-p}$ on the Kepler ball $\r Z_\el\ui\c Z.$ Then
$$\f{|(\dl\o\dl\lf_\el)^{d_\el}|}{d_\el!\ 2^{d_\el}}=\lD(w,w)^{-p}\ d\lL_\el(w).$$
\end{proposition}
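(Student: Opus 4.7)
The plan is to reduce each identity to a single-point computation using $K$-invariance and then exploit the cone structure of $\r Z_\el$. Both sides of (i) and (ii) are $K$-invariant measures on $\r Z_\el$ (respectively on $\r Z_\el\ui\c Z$), so by the polar decomposition \er{2a} it suffices to verify them at $w=\F t$ for arbitrary $t\in\lO_c$. Around such a point, local complex coordinates on $\r Z_\el$ are provided by the chart $\Lt:\r U\xx V\to\r Z_\el$ of Lemma \ref{2f}; at $(u_0,0)$ its derivative is block-diagonal, $\Lt'_{u_0,0}=id_U\op D_{u_0,c}|_V$, which will permit direct manipulation of the Hermitian Hessian.

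The key observation powering both parts is that $\r Z_\el$ is a cone in $Z$, so the Euler vector $w$ is a holomorphic tangent vector to $\r Z_\el$ at every point. Setting $h(w):=(w|w)$, the ambient holomorphic gradient of $h$ under $(\cdot|\cdot)$ is $w$ itself, and since $w$ is tangent to $\r Z_\el$ the same $w$ represents the intrinsic gradient of $h$ on the submanifold. Consequently $|\lN h|_g^2=h$ for the induced Hermitian metric $g=\dl\o\dl h$, which translates into the algebraic identity
$$\dl h\yi\o\dl h\yi(\dl\o\dl h)^{d_\el-1}=\f{h}{d_\el}(\dl\o\dl h)^{d_\el}$$
on $\r Z_\el$. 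Moreover, $\dl\o\dl h$ is the K\"ahler form of the induced Hermitian metric, so the standard K\"ahler volume formula gives $|(\dl\o\dl h)^{d_\el}|/(d_\el!\,2^{d_\el})=d\lL_\el$.

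For (i) I will expand $\dl\o\dl h^\la=\la h^{\la-1}\dl\o\dl h+\la(\la-1)h^{\la-2}\dl h\yi\o\dl h$, raise to the $d_\el$-th exterior power, and discard every term containing a repeated factor of $\dl h\yi\o\dl h$ (since $\dl h\yi\dl h=0$). The two surviving terms collapse, via the cone identity, into
$$(\dl\o\dl h^\la)^{d_\el}=\la^{d_\el+1}h^{d_\el(\la-1)}(\dl\o\dl h)^{d_\el},$$
and dividing by $d_\el!\,2^{d_\el}$ together with the volume formula above yields (i).

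For (ii) I will mimic the same scheme with $-p\log\lD$ in place of $h^\la$: writing $\dl\o\dl\log\lD^{-p}=-p\,\dl\o\dl\lD/\lD+p\,\dl\lD\yi\o\dl\lD/\lD^2$, raising to the $d_\el$-th power, and again throwing away the higher wedges of $\dl\lD\yi\o\dl\lD$ reduces the task to evaluating the single cross term $\dl\lD\yi\o\dl\lD\yi(\dl\o\dl\lD)^{d_\el-1}$. The hard part will be establishing the quasi-determinant analogue of the cone identity: by Lemma \ref{1c} and the spectral decomposition one has $\lD(\F t,\F t)=\P_{i=1}^\el(1-t_i)$ on $X_c$, and the $K^c$-invariance of the Bergman-type form $\dl\o\dl\log\lD$ forces it to be block-diagonal in the Peirce decomposition $Z=U\op V\op W$ with scalar blocks whose values are extracted from the classical Bergman kernel identity $\det\dl\o\dl\log K=\text{const}\cdot K$ on $\c Z$. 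Assembling these pieces will collapse $(\dl\o\dl\log\lD^{-p})^{d_\el}$ into $\lD^{-p}(\dl\o\dl h)^{d_\el}$, and dividing by $d_\el!\,2^{d_\el}$ gives (ii).
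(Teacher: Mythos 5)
Your argument for part (i) is correct and is really the same computation as the paper's, just phrased differential-geometrically: the paper's key step is that the rank-one perturbation $id+\f{\la-1}{(w|w)}ww^*$ is the identity on $(T_w\r Z_\el)^\perp=Z_w^0$ and has trace $\la$ on all of $Z$, which is precisely the coordinate-free statement that the intrinsic gradient of $h=(w|w)$ is the Euler field $w$ with $|\lN h|^2=h$. Both routes reduce to the cone property of $\r Z_\el$ and deliver the factor $\la^{d_\el+1}(w|w)^{d_\el(\la-1)}$. This part is fine, and it buys a cleaner conceptual explanation of why the rank-one correction collapses so nicely.

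Part (ii), however, does not close as written. The opening plan — expand $\dl\o\dl\log\lD^{-p}=-p\,\dl\o\dl\lD/\lD+p\,\dl\lD\yi\o\dl\lD/\lD^2$, raise to the power $d_\el$, and invoke a ``cone identity'' for the cross term — has no analogue of the identity $|\lN h|^2=h$ available, because $\lD$ is not homogeneous and the Kepler ball $\r Z_\el\ui\c Z$ is not a cone. Even if you could evaluate the cross term $\dl\lD\yi\o\dl\lD\yi(\dl\o\dl\lD)^{d_\el-1}$, the answer would be expressed in terms of $(\dl\o\dl\lD)^{d_\el}$, not in terms of $(\dl\o\dl h)^{d_\el}=d_\el!\,2^{d_\el}\,d\lL_\el$, so the target identity would still be out of reach. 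When you then pivot to ``$K^c$-invariance forces block-diagonality in $U\op V\op W$ with scalar blocks,'' this is both imprecise and off-target: invariance under the stabilizer of the point $w=\F t$ would only give diagonality in the joint Peirce decomposition of $w$ (and for generic $t$ the stabilizer is too small to give you scalar blocks), and, more importantly, it is not true that the blocks are scalar. The fact you actually need — and which the paper uses — is the exact formula $\dl_u\o\dl_v\log\lD(w,w)^{-p}=(B_{w,w}^{-1}u|v)$ together with the structural identity $B_{w,w}|_{Z_w^0}=id$, which is what permits replacing $\det_{T_w(\r Z_\el)}B_{w,w}^{-1}$ by $\det_Z B_{w,w}^{-1}=\lD(w,w)^{-p}$. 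This is not extractable from ``$\det\dl\o\dl\log K=\text{const}\cdot K$'' on the ambient ball $\c Z$ alone, because that identity is a determinant over all of $Z$, not over the tangent space of the submanifold, and passing from one to the other is precisely the missing step.
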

\begin{proof} Consider the holomorphic chart $\Lt$ defined in Lemma~\ref{2f}, and let $z_i$ be an orthonormal basis of $W^\perp$. Suppose that $\det\(\dl_i\o\dl_j(\lf\oc\Lt)(z)\)=\det({\Lt'_z}^*\Lt'_z)\ F(\Lt_z)$ for some smooth function $F.$ Then \er{2g} implies
$$\Lt^*\f{|(\dl\o\dl\lf)^{d_\el}|}{d_\el!\ 2^{d_\el}}=\f{|(\dl\o\dl(\lf\oc\Lt))^{d_\el}|}{d_\el!\ 2^{d_\el}}
=\det\(\dl_i\o\dl_j(\lf\oc\Lt)(z)\)\ d\ll(z)$$
$$=F(\Lt_z)\ \det({\Lt'_z}^*\Lt'_z)\ d\ll(z)=F(\Lt_z)\ \Lt^*\lL_\el=\Lt^*\(F(w)\ d\lL_\el(w)\).$$
(i) In the first case we have
$$\f{\dl_u\o\dl_v(\lf_\el\oc\Lt)(z)}{\la(\Lt_z|\Lt_z)^{\la-1}}=(\Lt'_zu|\Lt'_zv)+\f{\la-1}{(\Lt_z|\Lt_z)}(\Lt'_zu|\Lt_z)(\Lt_z|\Lt'_zv)
=\(u\Big|{\Lt'_z}^*\(id+\f{\la-1}{(\Lt_z|\Lt_z)}\Lt_z\Lt_z^*\)\Lt'_zv\),$$
where $\lx\lh^*v:=\lx(\lh|v).$ The inner `matrix' $id+\f{\la-1}{(\Lt_z|\Lt_z)}\Lt_z\Lt_z^*$ is the identity on $(Ran\ \Lt'_z)^\perp=Z_{\Lt_z}^0$ since $\Lt_z^*(Z_{\Lt_z}^0)=(\Lt_z|Z_{\Lt_z}^0)=\{0\}.$ It follows that
$$\f1{\la^{d_\el}\ (\Lt_z|\Lt_z)^{d_\el(\la-1)}}\f{\det\(\dl_i\o\dl_j(\lf_\el\oc\Lt)(z)\)}{\det({\Lt'_z}^*\Lt'_z)}
={\det}_{T_{\Lt_z}(\r Z_\el)}\(id+\f{\la-1}{(\Lt_z|\Lt_z)}\Lt_z\Lt_z^*\)$$
$$={\det}_Z\(id+\f{\la-1}{(\Lt_z|\Lt_z)}\Lt_z\Lt_z^*\)=1+\f{\la-1}{(\Lt_z|\Lt_z)}\tr\Lt_z\Lt_z^*=\la.$$
(ii) In the bounded case, by \cite{L2}, the Bergman metric on $\c Z$ is given by $(B_{z,z}^{-1}u|v)$ for $z\in\c Z$ and tangent vectors $u,v\in T_z(\c Z)=Z.$ Hence $\dl_u\o\dl_v\log\lD(z,z)^{-p}=\dl_u\o\dl_v\log\det B(z,z)^{-1}=(B_{z,z}^{-1}u|v).$ Since the chart $\Lt$ is holomorphic, we obtain for $\lx,\lh\in W^\perp$
$$\dl_\lx\o\dl_\lh(\lf_\el\oc\Lt)(z)=(B_{\Lt_z,\Lt_z}^{-1}\Lt'_z\lx|\Lt'_z\lh)=(\lx|{\Lt'_z}^*B_{\Lt_z,\Lt_z}^{-1}\Lt'_z\lh).$$
For any $w\in\r Z_\el$ we have $T_w(\r Z_\el)=Z^2_w\op Z^1_w,$ and
$B_{w,w}|_{Z^0_w}=id.$ It follows that
$$\f{\det\(\dl_i\o\dl_j(\lf_\el\oc\Lt)(z)\)}{\det({\Lt'_z}^*\Lt'_z)}={\det}_{T_{\Lt_z}(\r Z_\el)}B_{\Lt_z,\Lt_z}^{-1}={\det}_Z\ B_{\Lt_z,\Lt_z}^{-1}=\lD(\Lt_z,\Lt_z)^{-p}.$$
\end{proof}

Associated with the K\"ahler potentials $\lf_\el$ we consider, for $\ln>0,$ the $K$-invariant measures
\be{2k}\lr_\el^\ln:=e^{-\ln\lf_\el(w)}\f{|(\dl\o\dl\lf_\el)^{d_\el}|}{d_\el!(2\lp)^{d_\el}}.\ee

\begin{corollary} (i) For the potential $\lf_\el(w)=(w|w)^\la$ on $\r Z_\el$ the measure \er{2k} has the radial part
\be{2i}d\t\lr(t)=\f{\lG_\el(a\el/2)}{\lG_\el(d/r)\ \lG_\el(ar/2)}\la^{d_\el+1}\ N_c(t)^{d''_\el/\el}\ (t|c)^{d_\el(\la-1)}\ e^{-\ln(t|c)^\la}\ dt\ee
on $\lO_c.$ In particular, $d\t\lr_1^\ln(tc)=\la^p\ t^{p-2}\ t^{(p-1)(\la-1)}\ e^{-\ln t^\la}\ dt=\la^p\ t^{\la(p-1)-1}
\ e^{-\ln t^\la}\ dt$ for $t>0.$\\
(ii) For the potential $\lf_\el(w)=\log\lD(w,w)^{-p}$ on $\r Z_\el\ui\c Z,$ the measure \er{2k} has the radial part
\be{2j}d\t\lr(t)=\f{\lG_\el(a\el/2)}{\lG_\el(d/r)\ \lG_\el(ar/2)}\ N_c(t)^{d''_\el/\el}\ N_c(c-t)^{\ln-p}\ dt\ee
on $\lO_c\ui(c-\lO_c).$ In particular, $d\t\lr_1^\ln(tc)=t^{p-2}\ (1-t)^{\ln-p}\ dt$ for $0<t<1.$
\end{corollary}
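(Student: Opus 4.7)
The plan is to combine the definition \er{2k} of $\lr_\el^\ln$, the volume-form identities of Proposition~\ref{2h}, and the general radial-part recipe \er{2l}, reducing the K-invariant integrand on $\r Z_\el$ to an explicit function on the cone $\lO_c$ via spectral decomposition in the Jordan algebra $X_c$.

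First, I rewrite $\lr_\el^\ln$ as a K-invariant positive density times the Riemann measure $\lL_\el$. Proposition~\ref{2h}(i) yields in case~(i)
$$\lr_\el^\ln(w)=\f{\la^{d_\el+1}}{\lp^{d_\el}}\,(w|w)^{d_\el(\la-1)}\,e^{-\ln(w|w)^\la}\,d\lL_\el(w),$$
whose density is manifestly $K$-invariant since $(\cdot|\cdot)$ is. Analogously, Proposition~\ref{2h}(ii) exhibits $\lr_\el^\ln$ in case~(ii) as $\lp^{-d_\el}$ times a $K$-invariant power of $\lD(w,w)$ times $d\lL_\el(w)$. Formula \er{2l}, together with the explicit radial part \er{2e} of $\lL_\el$, then reduces the problem to evaluating these $K$-invariant densities at $\F t$ for $t\in\lO_c$.

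Second, I perform the evaluation by spectral decomposition on $X_c$. Writing $t=\S_{i=1}^\el\ll_i c_i$ with $\ll_i>0$ and $c_1+\ldots+c_\el=c$, the Jordan square root is $\F t=\S_i\F{\ll_i}\,c_i\in X_c$, and since the $c_i$ are orthogonal minimal tripotents of unit length in the normalized inner product,
$$(\F t|\F t)=\S_i\ll_i(c_i|c_i)=\S_i\ll_i=(t|c).$$
Keeping every factor in the computation from the proof of Lemma~\ref{1c} (not only the top-degree one) yields $\lD(\F t,\F t)=\P_{i=1}^\el(1-\ll_i)=N_c(c-t)$. Substituting these two identities into the reduced expression from step one produces \er{2i} and \er{2j}. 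The ``in particular'' clauses follow by specializing to $\el=1$, where $\lO_c=\Rl_+c$ is one-dimensional, $N_c(tc)=t$, $(tc|c)=t$, $d''_1=p-2$, and $d_1=p-1$.

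No serious obstacle is anticipated: the only new input beyond the established material of the section is the identity $\lD(\F t,\F t)=N_c(c-t)$, which is a routine extension of Lemma~\ref{1c} from the top-degree component $\lD_\el$ to the full quasi-determinant $\lD$.
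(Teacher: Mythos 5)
Your proof is correct and takes essentially the same route as the paper: the paper's own proof is simply the observation that $(\F t|\F t)=(t|c)$ and $\lD(\F t,\F t)=N_c(c-t)$ together with \er{2e}, \er{2l} and Proposition~\ref{2h}. The extra detail you supply — extracting both identities from the spectral decomposition $t=\S_i\ll_ic_i$, with $\lD(\F t,\F t)=\P(1-\ll_i)$ read off from the computation in Lemma~\ref{1c} — is exactly the implicit content of the paper's one-line argument.
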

\begin{proof} Since $(\F t|\F t)=(t|c)$ for all $t\in\lO_c$ and $\lD(\F t,\F t)=N_c(c-t)$ for all $t\in\lO_c\ui(c-\lO_c),$ the assertion follows with \er{2e} and Proposition~\ref{2h}.
\end{proof}

\section{Reproducing kernel Hilbert spaces}
Let $\Nl^r_+$ be the set of all partitions $\m$ of length $\le r$ and denote by $E^\m(z,w)=E^\m_w(z)$ the Fischer-Fock reproducing kernel for the Peter-Weyl space $\PL_\m(Z).$ Thus
\be{3a}e^{(z|w)}=\S_\m E^\m(z,w).\ee
Putting $|\m|:=m_1+\ldots+m_r,$ it follows that
\be{3b}\f{(z|w)^k}{k!}=\S_{|\m|=k}E^\m(z,w).\ee
Put $d_\m:=\dim\PL_\m(Z).$ Consider the {\bf Gindikin-Koecher Gamma function}
\be{3v}\lG_r(\m)=(2\lp)^{\f{r(r-1)}4a}\P_{j=1}^r\lG(m_j-\tfrac{j-1}2a)\ee
and define the generalized {\bf Pochhammer symbol}, for a complex number $s$ and $\m\in\Nl^r_+,$ by
\be{3w}(s)_\m =\f{\lG_r(\m+s)}{\lG_r(s)}=\P_{j=1}^r (s-\tfrac{j-1}2a)_{m_j}.\ee
Then \cite[Section IX.4]{FK} implies
\be{3c}E^\m(e,e)=\f{d_\m}{(d/r)_\m}.\ee
It follows that $E_e^\m(z)=\f{d_\m}{(d/r)_\m}\ \lF_\m(z),$ where $\lF_\m$ is the so-called {\bf spherical polynomial} of type $\m.$ This relationship will often be used later.

\begin{lemma}\label{3d} For partitions $(m)=(m,0,\ldots,0)$ of length $\el=1$, we have
\be{3e}E^{(m)}(z,w)=\f{(z|w)^m}{m!}\quad\forall\ z,w\in\r Z_1.\ee
\end{lemma}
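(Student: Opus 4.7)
My plan is to start from the Fischer-Fock expansion \er{3b}
$$\f{(z|w)^m}{m!}=\S_{|\m|=m}E^\m(z,w)$$
and to argue that, on the rank-one locus $\r Z_1\xx\r Z_1$, all summands on the right vanish except the one indexed by the partition $(m,0,\ldots,0)$. Once this is established, the identity \er{3e} is immediate.

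\textbf{Key step.} The heart of the matter is the vanishing claim: if $\m=(m_1,\ldots,m_r)$ has length $\ge 2$ (i.e.\ $m_2>0$), then every polynomial in $\PL_\m(Z)$ vanishes on $\r Z_1$. I would deduce this from the standard conical realization of $\PL_\m(Z)$ as the $K$-cyclic span of
$$N_\m(z)=N_1(z)^{m_1-m_2}\,N_2(z)^{m_2-m_3}\cdots N_r(z)^{m_r},$$
where $N_k$ is the $k$-th principal Jordan minor relative to the frame $e_1,\ldots,e_r$ (cf.\ \cite[Ch.~XI]{FK}). Since $N_k$ vanishes on every element of rank $<k$, and since $m_2>0$ forces some $N_k$ with $k\ge 2$ to appear with positive exponent in $N_\m$, the polynomial $N_\m$ vanishes identically on $\r Z_1$. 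The $K$-invariance of the rank (used throughout Section~2) then propagates this vanishing from $N_\m$ to its entire $K$-cyclic span, i.e.\ to all of $\PL_\m(Z)$.

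\textbf{Conclusion.} Regarding $z\mapsto E^\m(z,w)=E^\m_w(z)$ as an element of $\PL_\m(Z)$ for each fixed $w\in Z$, the previous step gives $E^\m(z,w)=0$ whenever $z\in\r Z_1$ and $\m$ has length $\ge 2$. Substituting into \er{3b} collapses the right-hand side to its unique surviving term, yielding
$$\f{(z|w)^m}{m!}=E^{(m)}(z,w)\qquad(z\in\r Z_1,\ w\in Z),$$
which in particular covers $z,w\in\r Z_1$ and is precisely \er{3e}.

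\textbf{Main obstacle.} There is no serious difficulty; the argument is purely formal once one accepts the vanishing of the Jordan minors $N_k$ on elements of rank $<k$. In the non-tube setting ($b>0$) the conical polynomials $N_k$ have to be interpreted on $Z$ through the Peirce 2-space of the maximal tripotent $e$, but the resulting vanishing on $\r Z_1$ is standard (cf.\ \cite{L2}).
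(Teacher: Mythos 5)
Your proof is correct and follows essentially the same route as the paper's: both invoke the expansion \er{3b}, the conical realization of $\PL_\m(Z)$ as the $K$-cyclic span of the highest-weight polynomial $N_\m$, and the vanishing of $N_\m$ (hence of all its $K$-translates) on $\r Z_1$ when $m_2>0$. You spell out a bit more explicitly why $N_\m$ vanishes on rank-one elements (via the minors $N_k$), but the argument is the same.
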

\begin{proof} The vector space $\PL_\m(Z)$ is spanned by $K$-translates of the highest weight polynomial
$$N_\m(z)=N_1(z)^{m_1-m_2}\ N_2(z)^{m_2-m_3}\cdots N_r(z)^{m_r}.$$
Now suppose that $m_2>0.$ Then every $p\in\PL_\m(Z)$ vanishes on $\r Z_1$. Since $E^\m_w\in\PL_\m(Z)$ for all $w\in Z$ it follows that
$E^\m(z,w)=E^\m_w(z)=0$ if $z\in\r Z_1.$ This implies \er{3e} for $z,w\in\r Z_1,$ since the sum in \er{3b} has only terms with $m_2=0.$
\end{proof}

\begin{lemma}\label{3f} Let $z,w,c\in Z.$ Then
$$\I_K dk\ E^\m(z,kc)\ E^\n(kc,w)=\f{\ld_{\m,\n}}{d_\m}\ E^\m(c,c)\ E^\m(z,w).$$
In particular, choosing $c=e_1+\ldots+e_\el,$ we have
$$(E^\m_z|E^\n_w)_{S_\el}=\I_{S_\el}ds\ E^\m(z,s)\ E^\n(s,w)=\f{\ld_{\m,\n}}{d_\m}\ E^\m(c,c)\ E^\m(z,w).$$
In the special case $\el=1$, we have $\I_{S_1}ds\ (z|s)^m\ (s|w)^n=\f{\ld_{m,n}}{d_{(m)}}\ (z|w)^m\quad\forall\ z,w\in \r Z_1.$
\end{lemma}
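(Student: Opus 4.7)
The plan is to exploit that the Peter-Weyl spaces $\PL_\m(Z)$ are pairwise inequivalent irreducible $K$-modules whose Fischer-Fock reproducing kernels satisfy $E^\m(kz,kw)=E^\m(z,w)$ for all $k\in K$. Setting
$$F_{\m,\n}(z,w):=\I_K dk\ E^\m(z,kc)\ E^\n(kc,w),$$
the substitution $k\qi k'k$ in the Haar measure, combined with the $K$-invariance of the kernels, immediately yields $F_{\m,\n}(k'z,k'w)=F_{\m,\n}(z,w)$. As each $\PL_\m(Z)$ is finite-dimensional, $F_{\m,\n}$ lies in $\PL_\m(Z)\xt\o{\PL_\n(Z)}$ and is $K$-invariant. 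By Schur's lemma the space of $K$-invariants there vanishes unless $\m=\n$, and is otherwise one-dimensional and spanned by $E^\m(z,w)$ (the identity intertwiner on $\PL_\m$). Thus $F_{\m,\n}=\ld_{\m,\n}\ll_\m E^\m$ for some scalar $\ll_\m$.

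To pin down $\ll_\m$, I would fix an orthonormal Fischer-Fock basis $\{\lf_\la\}$ of $\PL_\m(Z)$ so $E^\m(z,w)=\S_\la\lf_\la(z)\ \o{\lf_\la(w)}$, and expand both factors of the integrand:
$$F_{\m,\m}(z,w)=\S_{\la,\lb}\lf_\la(z)\ \o{\lf_\lb(w)}\I_K dk\ \o{\lf_\la(kc)}\lf_\lb(kc).$$
The Hermitian form $(f,g)\qi\I_K dk\ \o{f(kc)}g(kc)$ on $\PL_\m(Z)$ is $K$-invariant, hence by a second application of Schur's lemma is some multiple $\lm_\m$ of the Fischer-Fock inner product. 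Taking $f=g=\lf_\la$ and summing over $\la$ gives $d_\m\lm_\m=\I_K dk\ E^\m(kc,kc)=E^\m(c,c)$ by $K$-invariance of the diagonal. Therefore $\ll_\m=\lm_\m=E^\m(c,c)/d_\m$, which is the claimed identity.

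For $c=e_1+\ldots+e_\el$ the $K$-orbit $Kc$ equals $S_\el$ and the integrand depends on $k$ only through $kc$, so the integral over $K$ pushes forward to the normalized $K$-invariant integral on $S_\el$. In the case $\el=1$, Lemma~\ref{3d} identifies $E^{(m)}(z,s)=(z|s)^m/m!$ on $\r Z_1$ and in particular $E^{(m)}(e_1,e_1)=1/m!$; substituting these and cancelling the factorials produces the displayed formula. The main delicate step is the double use of Schur's lemma (first identifying the $K$-invariant line in $\PL_\m\xt\o{\PL_\n}$, then identifying the auxiliary $K$-invariant Hermitian form with a scalar multiple of the Fischer-Fock one); once these are in place, extraction of the constant is a routine trace calculation.
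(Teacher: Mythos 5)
Your proof is correct and rests on exactly the same mechanism as the paper's: the paper simply cites Schur orthogonality for the matrix coefficients $(E^\m_z\,|\,k^\lp E^\m_s)$, whereas you unpack that orthogonality into two applications of Schur's lemma (one to locate the one-dimensional invariant line in $\PL_\m\otimes\o{\PL_\n}$, one to identify the auxiliary Hermitian form $\int_K\o{f(kc)}g(kc)\,dk$ with a scalar multiple of the Fischer–Fock inner product), then extract the constant by taking a trace. This is a re-derivation rather than a different route, so the two arguments are essentially the same.
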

\begin{proof} Applying Schur orthogonality \cite[Theorem 14.3.3]{D} to the compact group $K$ and its irreducible representation $\lp$ on
$\PL_\m(Z)$ it follows that
$$\I_K dk\ E^\m(z,ks)\ E^\n(ks,w)=\I_K dk\ (E^\m_z|E^\m_{ks})(E^\n_{ks}|E^\n_w)$$
$$=\I_K dk\ (E^\m_z|k^\lp E^\m_s)(k^\lp E^\n_s|E^\n_w)=\f{\ld_{\m,\n}}{d_\m}(E^\m_s|E^\m_s)(E^\m_z|E^\m_w)
=\f{\ld_{\m,\n}}{d_\m}\ E^\m(s,s)\ E^\m(z,w).$$
The special case $\el=1$ follows from Lemma~\ref{3d} since $(s|s)=1$ for all $s\in S_1.$
\end{proof}

From now until the conclusion of this section, let us temporarily exclude the `top rank on tube domain' case $\el=r,\ b=0.$
Since polynomials in $\PL_\m$ vanish on elements of rank $\el$ if $m_{\el+1}>0$, Theorem~\ref{1j} implies that any~holomorphic function $\hf$ on $\r Z_\el$ (or~on~a suitable $K$-invariant open subset containing the origin) has the Peter-Weyl expansion
\be{3h}\hf=\S_{\m\in\Nl^\el_+}\hf_\m,\ee
where $\Nl^\el_+:=\{\m\in\Nl^r_+:\ m_{\el+1}=0\}$ denotes the set of all partitions of length $\le\el.$ For a given coefficient `sequence' $\ls=(\ls_\m)_{\m\in\Nl^\el_+},$ let $\HL_\ls$ denote the Hilbert space of all holomorphic functions on $\r Z_\el$ (or a suitable $K$-invariant open subset), endowed with the inner product
\be{3i}(\hf|\hq)_\ls:=\S_{\m\in\Nl^\el_+}\ls_\m(\hf_\m|\hq_\m)_{S_\el}\ee
in terms of the Peter-Weyl decomposition \er{3h}. In case $\el=1$ for $\hf,\hq\in\HL_\ls$ we have
$$(\hf|\hq)_\ls=\S_{m=0}^\oo \ls_m(\hf_m|\hq_m)_{S_1}.$$

\begin{proposition}\label{3j} The reproducing kernel of $\HL_\ls$ has the {\bf Peter-Weyl decomposition}
\be{3k}\KL^\ls(z,w)=\S_{\m\in\Nl^\el_+}\f{(d'_\el/\el)_\m}{\ls_\m}\f{d_\m}{d_\m^c}\ E^\m(z,w)\quad\forall\ z,w\in\r Z_\el,\ee
where $d_\m^c:=\dim\PL_\m(Z^2_c).$ For $\el=1$ this simplifies to
\be{3l}\KL^\ls(z,w)=\S_{m\ge 0}\f{d_{(m)}}{\ls_m}\ (z|w)^m.\ee
\end{proposition}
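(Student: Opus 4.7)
The plan is to compute the reproducing kernel piece by piece on each Peter-Weyl summand, exploiting that $(\cdot|\cdot)_\ls$ is diagonal in the decomposition \er{3h}. Since $(\hf|\hq)_\ls=\S_\m\ls_\m(\hf_\m|\hq_\m)_{S_\el}$ and $\hf(w)=\S_\m\hf_\m(w)$, the reproducing kernel of $\HL_\ls$ splits as $\KL^\ls(z,w)=\S_{\m\in\Nl_+^\el}\ls_\m^{-1}\KL_\m(z,w)$, where $\KL_\m(z,w)$ is the reproducing kernel of the irreducible $K$-submodule $\PL_\m(Z)$ with respect to $(\cdot|\cdot)_{S_\el}$. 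The task reduces to identifying each $\KL_\m$ individually.

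Since $\PL_\m(Z)$ is $K$-irreducible and both $(\cdot|\cdot)_{S_\el}$ and the Fischer-Fock inner product are $K$-invariant, Schur's lemma forces them to be proportional on $\PL_\m(Z)$, so $\KL_\m(z,w)=\ll_\m E^\m(z,w)$ for some positive scalar $\ll_\m$. Specializing Lemma~\ref{3f} to $\m=\n$, $z=w$ yields $(E^\m_w|E^\m_w)_{S_\el}=d_\m^{-1}E^\m(c,c)\,E^\m(w,w)$, and comparing with the reproducing identity $E^\m(w,w)=(E^\m_w|\KL_\m(\cdot,w))_{S_\el}$ forces $\ll_\m=d_\m/E^\m(c,c)$.

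The key remaining input is the evaluation $E^\m(c,c)=d_\m^c/(d'_\el/\el)_\m$ for $\m\in\Nl_+^\el$, which is the natural analog of \er{3c} applied inside the Peirce 2-space $Z_c^2$ (an irreducible hermitian Jordan triple of rank $\el$, dimension $d'_\el$, with unit $c$ and ``$d/r$'' invariant $d'_\el/\el$). The mechanism: $E^\m(\cdot,c)|_{Z_c^2}$ is a $K^c$-invariant element of $\PL_\m(Z_c^2)$ and so coincides, up to a scalar fixed by comparing the Fock inner products on $\PL_\m(Z)$ and $\PL_\m(Z_c^2)$, with the Fischer-Fock kernel of $Z_c^2$ based at $c$; the claimed value is then \er{3c} applied to $Z_c^2$. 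Substituting $\ll_\m=d_\m(d'_\el/\el)_\m/d_\m^c$ into the Peter-Weyl sum produces \er{3k}.

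Finally, for $\el=1$ one has $Z_c^2=\Cl c$ one-dimensional, so $d_{(m)}^c=1$ and $(d'_1/1)_{(m)}=(1)_m=m!$; Lemma~\ref{3d} supplies $E^{(m)}(z,w)=(z|w)^m/m!$, and these combine to reduce \er{3k} to \er{3l}. The only genuinely delicate step is the evaluation $E^\m(c,c)=d_\m^c/(d'_\el/\el)_\m$, which requires relating the Fischer-Fock structure of $Z$ to that of its Peirce 2-space $Z_c^2$ across partitions of length $\le\el$; everything else is formal bookkeeping with Schur's lemma, Lemma~\ref{3f}, and the definition \er{3i} of $(\cdot|\cdot)_\ls$.
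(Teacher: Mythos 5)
Your proposal is correct and follows essentially the same route as the paper: both reduce to determining a single scalar $c_\m$ (or $\ll_\m$) per signature via Lemma~\ref{3f} (Schur orthogonality), both identify that scalar as $d_\m/E^\m(c,c)$, and both evaluate $E^\m(c,c)=d_\m^c/(d'_\el/\el)_\m$ by applying \er{3c} to the Peirce $2$-space $Z_c^2$. The only stylistic difference is that you invoke Schur's lemma explicitly to justify $\KL_\m=\ll_\m E^\m$, whereas the paper just posits the Peter-Weyl ansatz $\KL^\ls=\S_\m c_\m E^\m$ and uses $(\KL_z^\ls|\KL_w^\ls)_\ls=\KL^\ls(z,w)$; the computations are equivalent.
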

\begin{proof} There exist constants $c_\m>0$ such that
$$\KL^\ls(z,w)=\KL_w^\ls(z)=\S_{\m\in\Nl^\el_+}c_\m\ E^\m(z,w)=\S_{\m\in\Nl^\el_+}c_\m\ E_w^\m(z).$$
The reproducing property and \er{3i} yield
$$\S_\m c_\m\ E^\m(z,w)=\KL^\ls(z,w)=(\KL_z^\ls|\KL_w^\ls)_\ls=\S_\m \ls_\m\ c_\m^2(E^\m_z|E^\m_w)_{S_\el}
=\S_\m\ \f{\ls_\m c_\m^2}{d_\m}E^\m(c,c)E^\m(z,w).$$
Comparing coefficients it follows that
$$\f1{c_\m}=\f{\ls_\m}{d_\m}\ E^\m(c,c)=\f{\ls_\m}{d_\m}\ \f{d_\m^c}{(d'_\el/\el)_\m}.$$
Here we applied \er{3c} to $Z_c^2$ and used the fact that $E^\m$ restricted to $Z_c^2$ is the Fischer-Fock kernel relative to $Z_c^2.$
For $\el=1$ use $(d'_1/1)_m=m!$ and Lemma \ref{3d}.
\end{proof}

Let $H^2(\r Z_\el,d\lr)$ denote the Hilbert space of all holomorphic functions on $\r Z_\el,$ or a suitable $K$-invariant open subset, which are square-integrable with respect to a given $K$-invariant measure $\lr.$

\begin{proposition}\label{3m} For a $K$-invariant smooth measure $\lr$ the coefficients for the Hilbert space
$H^2(\r Z_\el,d\lr)$ are given by the {\bf moments}
$$\ls_\m=\I_{\lO_c}d\t\lr\ N_\m.$$
For $\el=1$ we obtain $\ls_m=\I_0^\oo d\t\lr(t)\ t^m.$
\end{proposition}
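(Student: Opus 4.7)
The plan is to unwind $(\hf|\hq)_\lr$ via the polar decomposition \eqref{2a} and match the result coefficient-by-coefficient against the defining expression \eqref{3i}. Applying \eqref{2a} directly gives
$$(\hf|\hq)_\lr = \I_{\lO_c} d\t\lr(t) \I_K dk\ \hf(k\F t)\ \o{\hq(k\F t)}.$$
I would then expand $\hf = \S_\m \hf_\m$ and $\hq = \S_\m \hq_\m$ via Peter-Weyl \eqref{3h}. Since $\PL_\m(Z)$ and $\PL_\n(Z)$ are inequivalent irreducible $K$-modules for $\m \ne \n$, Schur orthogonality (the very mechanism used to prove Lemma~\ref{3f}) annihilates the cross-terms under the inner $K$-integral, leaving a diagonal sum over $\m \in \Nl^\el_+$.

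The core identity to establish is, for each $p,q \in \PL_\m(Z)$ and $t \in \lO_c$,
$$\I_K dk\ p(k\F t)\ \o{q(k\F t)} = \f{E^\m(\F t, \F t)}{E^\m(c,c)}\ (p|q)_{S_\el}.$$
My approach is to write $p,q$ as finite linear combinations of reproducing kernels $E^\m_w$ and apply Lemma~\ref{3f} twice: once with the base tripotent $c$ there replaced by $\F t \in \lO_c \ic Z$, which produces the left-hand side with the overall factor $E^\m(\F t, \F t)/d_\m$, and once in its original form, which produces $(p|q)_{S_\el}$ with the factor $E^\m(c,c)/d_\m$. Taking the ratio gives the displayed identity, and combining with the Peter-Weyl decoupling yields
$$\ls_\m = \I_{\lO_c} d\t\lr(t)\ \f{E^\m(\F t, \F t)}{E^\m(c,c)}.$$

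The final step is to recognize the integrand as $N_\m(t)$ under $\t\lr$. As already used in the proof of Proposition~\ref{3j}, the restriction of $E^\m$ to $Z_c^2\xx Z_c^2$ is the Fischer-Fock kernel of the Jordan subtriple $Z_c^2$, in which $c$ is a maximal tripotent; hence $E^\m(\F t, \F t)/E^\m(c,c) = \lF_\m^c(t)$, the spherical polynomial of the euclidean Jordan algebra $X_c$ normalized by $\lF_\m^c(c)=1$. By the standard integral representation \cite[Ch.~XI]{FK}, $\lF_\m^c(t) = \I_{L_c} dk\ N_\m(kt)$. Because $\lr$ is $K$-invariant, its radial part $\t\lr$ is $L_c$-invariant, so Fubini lets the $L_c$-average be absorbed into $\t\lr$, yielding $\ls_\m = \I_{\lO_c} d\t\lr\ N_\m$. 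The $\el=1$ case then specializes to $\ls_m = \I_0^\oo d\t\lr(t)\ t^m$ since $N_{(m)}(tc) = t^m$.

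The main technical obstacle is justifying the termwise interchange of the $\m$-summation with the $t$- and $K$-integrations for general $\hf,\hq \in H^2(\r Z_\el, d\lr)$. The natural remedy is to first establish the identity on a dense subspace of Peter-Weyl polynomials, where every sum is finite and Fubini applies unconditionally, and then to extend by continuity of both inner products.
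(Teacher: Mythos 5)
Your proof is correct and follows essentially the same route as the paper: unwind the inner product through the polar decomposition, use Lemma~\ref{3f}/Schur orthogonality (applied with $\F t$ and with $c$) to reduce to the radial factor $E^\m(\F t,\F t)/E^\m(c,c)$, identify this with the spherical polynomial of $X_c$, and pass to $N_\m$ via the $L_c$-average and the $L_c$-invariance of $\t\lr$ — exactly the paper's equation~\er{3n}. The paper simply works with the specific pair $E^\m_z,E^\m_w$ rather than general $p,q\in\PL_\m$, which is a cosmetic difference.
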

\begin{proof} Lemma~\ref{3f} implies $(E^\m_z|E^\m_w)_{S_\el}=\f1{d_\m}\ E^\m(c,c)\ E^\m(z,w).$ Since
$$(E^\m_z|E^\m_w)_\lr=\I d\lr(\lz)\ \o{E^\m_z(\lz)}\ E^\m_w(\lz)=\I d\t\lr(t)\I_K dk\ \o{E^\m_z(k\F t)}\ E^\m_w(k\F t)
=\f{E^\m(z,w)}{d_\m}\I d\t\lr(t)\ E_e^\m(t)$$
the associated coefficients \er{3i} are given by
$$\ls_\m=\f1{E^\m(c,c)}\I_{\lO_c}d\t\lr(t)\ E^\m(t,e)=\I_{\lO_c}d\t\lr(t)\ N_\m(t).$$
The last equality is proved as follows: For $\m\in\Nl_+^\el$ the restriction of $N_\m$ to $X_c$ coincides with the conical function
$N_\m^c$ relative to $X_c$. The two integrals agree since
\be{3n}\I_{L_c}dk\ N_\m(kt)=\f{E^\m(t,c)}{E^\m(c,c)}=\f{E^\m(t,e)}{E^\m(c,c)}\ee
for all $t\in X_c,$ and $d\t\lr$ is invariant under the Jordan algebra automorphism group $L_c$ of $X_c.$
\end{proof}

Combining these results with \er{3k} or \er{3l}, respectively, yields the `Peter-Weyl decomposition' of the reproducing kernel of the Hilbert space $H^2(\r Z_\el,d\lr)$ associated with a $K$-invariant measure $\lr.$

We now consider $K$-invariant measures arising from a K\"ahler potential $\lf(w).$

\begin{lemma} \label{3o} Let $X$ be a euclidean Jordan algebra of dimension $d$ and rank $r.$ Let $\lf\in\PL(X)$ be an $m$-homogeneous polynomial. Then we have, for $Re(\lz)\ge 0,$
$$\I_{\lO}dx\ e^{-(x|e)}(x|e)^\lz\ \lf(x)=\f{\lG(m+d+\lz)}{\lG(m+d)}\I_{\lO}dx\ e^{-(x|e)}\ \lf(x).$$
More generally, for $\la>0,$
$$\la\I_{\lO}dx\ e^{-(x|e)^\la}\ (x|e)^\lz\ \lf(x)=\f{\lG\(\f{m+d+\lz}\la\)}{\lG(m+d)}\I_{\lO}dx\ e^{-(x|e)}\ \lf(x).$$
In particular, for each partition $\m\in\Nl_+^r,$ we have
\be{3t}\f{\lG(|\m|+d)}{\lG\(\f{|\m|+d+\lz}\la\)}\la\I_{\lO}dx\ e^{-(x|e)^\la}\ (x|e)^\lz\ N_\m(x)
=\I_{\lO}dx\ e^{-(x|e)}\ N_\m(x)=\lG_r\(\m+\f dr\)=\lG_r(d/r)\ (d/r)_\m.\ee
\end{lemma}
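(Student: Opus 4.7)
The plan is to exploit the $m$-homogeneity of $\lf$ via a polar decomposition of the cone $\lO$ along the linear functional $(\cdot|e)$. Parameterize $x=t\h x$ with $t=(x|e)>0$ and $\h x$ in the affine slice $S:=\{y\in\lO:\ (y|e)=1\}$; the Lebesgue measure then factors as $dx=t^{d-1}\,dt\,d\ls(\h x)$ for a natural measure $\ls$ on $S$, cf.~\cite[Ch.~VI]{FK}. Since $\lf(x)=t^m\lf(\h x)$ and $(x|e)^\lz=t^\lz$, the integral separates and
$$\I_\lO dx\ e^{-(x|e)}\ (x|e)^\lz\ \lf(x)=C_\lf\I_0^\oo t^{d+m+\lz-1}\ e^{-t}\,dt=\lG(d+m+\lz)\ C_\lf,$$
where $C_\lf:=\I_S\lf(\h x)\,d\ls(\h x)$ does not depend on $\lz$. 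Specializing to $\lz=0$ identifies $C_\lf$ in terms of $\I_\lO e^{-(x|e)}\lf(x)\,dx$, and dividing proves the first identity with $C_\lf$ cancelling.

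For the $\la$-version, I would carry out the same radial reduction and then substitute $u=t^\la$ in the resulting one-dimensional integral. This rewrites $\la\I_0^\oo t^{d+m+\lz-1}e^{-t^\la}\,dt$ as $\I_0^\oo u^{(d+m+\lz)/\la-1}e^{-u}\,du=\lG((d+m+\lz)/\la)$, and dividing by the $\la=1,\ \lz=0$ case again eliminates $C_\lf$ and yields the stated ratio.

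For the specialization to $\lf=N_\m$ (with $m=|\m|$), the reference integral $\I_\lO dx\,e^{-(x|e)}\,N_\m(x)$ is the classical Gindikin-Koecher evaluation, equal to $\lG_r(\m+d/r)$; by the Pochhammer definition \er{3w} this is $\lG_r(d/r)\,(d/r)_\m$, and substituting into the general ratio formula produces~\er{3t}. No real obstacle arises: the polar factorization of Lebesgue measure on the open cone along a positive linear functional is routine, and the hypothesis $\Re(\lz)\ge 0$ ensures absolute convergence of the scalar Gamma integrals.
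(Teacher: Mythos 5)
Your proof is correct, and it takes a cleaner route than the paper's own. The paper writes $X=\Rl e\oplus Y$ with $Y=e^\perp$, slices $\lO$ into the translated cross-sections $Y_t=\{y\in Y:\ te+y\in\lO\}$, expands $\lf(te+y)=\sum_{k=0}^m t^k\lf_k(y)$ with $\lf_k$ of degree $m-k$ in $y$, and then uses the dilation $Y_t=tY_1$ to see that each term contributes a power $t^{m+d-1}$ before resumming. You instead slice radially from the origin, $x=t\h x$ with $t=(x|e)$ and $\h x$ in the affine section $S=\{y\in\lO:\ (y|e)=1\}$, and invoke the $m$-homogeneity of $\lf$ once. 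This gives a genuine product decomposition $dx=t^{d-1}\,dt\,d\ls(\h x)$ up front, so the $t$-integral factors out immediately and the constant $C_\lf=\int_S\lf\,d\ls$ cancels between the cases $\lz$ and $\lz=0$; the $\la$-version then follows by the scalar substitution $u=t^\la$, again with $C_\lf$ cancelling. Both approaches exploit the same two ingredients (homogeneity of $\lf$ and scale-invariance of the cone), but yours avoids expanding $\lf$ in powers of $t$ and tracking the family of slices $Y_t$. The one point you should state rather than leave implicit is that $\int_\lO e^{-(x|e)}|\lf(x)|\,dx<\oo$ (polynomial growth against exponential decay), so that $C_\lf$ is finite and the division is legitimate. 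The closing evaluation $\int_\lO e^{-(x|e)}N_\m\,dx=\lG_r(\m+d/r)=\lG_r(d/r)(d/r)_\m$ is the classical Gindikin integral, the same reference \cite[VII.1.1]{FK} that the paper invokes.
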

\begin{proof} Write $X=\Rl e\op Y,$ with $Y=e^\perp.$ Then $\lf(te+y)=\S_{k=0}^m t^k\ \lf_k(y),$ where $\lf_k\in\PL(Y)$ are $(m-k)$-homogeneous. Note that
$$Y_t:=\{y\in Y:\ te+y\in\lO\}=Y\ui(\lO-te)=t(Y\ui(\lO-e))=tY_1.$$
Thus for $t>0$
$$\IL_k(t):=\I_{Y_t}dy\ \lf_k(y)=\I_{Y_1}dy_1\ t^{d-1}\ t^{m-k}\lf_k(y_1)=t^{d-1+m-k}\ \IL_k(1).$$
Therefore
$$F_k(\lz):=\I_0^\oo dt\ e^{-rt}\ (rt)^\lz\ t^k\I_{Y_t}dy\ \lf_k(y)=\I_0^\oo dt\ e^{-rt}\ (rt)^\lz\ t^k\ \IL_k(t)$$
$$=\I_0^\oo\f{dt}t\ e^{-rt}\ (rt)^\lz\ t^{d+m}\ \IL_k(1)=\f{\lG(m+d+\lz)}{r^{m+d}}\ \IL_k(1).$$
It follows that
$$\I_{\lO}dx\ e^{-(x|e)}(x|e)^\lz\ \lf(x)=\I_0^\oo dt\ e^{-rt}\ (rt)^\lz\I_{Y_t}dy\ \lf(te+y)=\S_{k=0}^m F_k(\lz)
=\f{\lG(m+d+\lz)}{r^{m+d}}\ \S_{k=0}^m\IL_k(1).$$
Putting $\lz=0$ in this relation and taking quotients, the first claim follows. For parameter $\la>0$ we put $rs:=(rt)^\la.$ Then
$(t/s)^\la=(rs)^{1-\la}$ and $\la dt=(t/s)ds.$ Write $te+y=\f ts(se+v).$ Then $y:=(t/s)v$ satisfies $dy=(t/s)^{d-1}\ dv$ and $y\in Y_t$ if and only if $v\in Y_s.$ Since $\lf(te+y)=(t/s)^m\ \lf(se+v)$ and $(te+y|e)=rt$ it follows that
$$\la\I_{\lO}dx\ e^{-(x|e)^\la}\ (x|e)^\lz\ \lf(x)=\la\I_0^\oo dt\ e^{-(rt)^\la}\ (rt)^\lz\I_{Y_t}dy\ \lf(te+y)$$
$$=\I_0^\oo ds\ e^{-rs}\ (rs)^{(\lz+(1-\la)(d+m))/\la}\I_{Y_s}dv\ \lf(se+v)
=\I_{\lO}dx\ e^{-(x|e)}\ (x|e)^{(\lz+(1-\la)(d+m))/\la}\lf(x).$$
Since $\f{\lz+(1-\la)(d+m)}\la+d+m=\f{\lz+d+m}\la,$ the second assertion is reduced to the first case.
The last assertion follows with \cite[VII.1.1]{FK}.
\end{proof}

\begin{theorem} (i) For the potential $\lf_\el(w)=(w|w)^\la,\ \la>0$ the Hilbert space $H^2(\r Z_\el,\lr)$ for the measure \er{2k} has the moments
\be{3p}\ls_\m=\la^{d_\el}\ \f{\lG_\el(a\el/2)}{\lG_\el(d/r)\ \lG_\el(ar/2)}\ \f{\lG_\el(\m+d_\el/\el)}{\ln^{d_\el+|\m|/\la}}
\ \f{\lG(d_\el+\f{|\m|}\la)}{\lG(d_\el+|\m|)}.\ee
In the flat case $\la=1$ this simplifies to
$$\ls_\m=\f{\lG_\el(a\el/2)}{\lG_\el(d/r)\ \lG_\el(ar/2)}\ \f{\lG_\el(\m+d_\el/\el)}{\ln^{d_\el+|\m|}}.$$
For $\el=1$ we have
$$\ls_m=\f{\la^{p-1}\ \lG(p-1)}{\ln^{p-1}}\ \f{\lG(p-1+\f m\la)}{\lG(p-1+m)}\ \f{(p-1)_m}{\ln^{m/\la}}
=\f{\la^{p-1}}{\ln^{p-1}}\ \f{\lG(p-1+\f m\la)}{\ln^{m/\la}}.$$
(ii) For the potential $\lf_\el(w):=\log\lD(w,w)^{-p},$ the Hilbert space $H^2(\r Z_\el\ui\c Z,d\lr)$ for the measure \er{2k} has the moments
\be{3q}\ls_\m=\f{\lG_\el(a\el/2)}{\lG_\el(d/r)\ \lG_\el(ar/2)}\ \f{\lG_\el(\ln-d_\el/\el)\ \lG_\el(\m+d_\el/\el)}{\lG_\el(\m+\ln)}.\ee
\end{theorem}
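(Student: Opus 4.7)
The plan is to reduce the moments to explicit cone integrals via Proposition~\ref{3m} and then apply classical Jordan-algebra integrals inside the Peirce subalgebra~$X_c.$ By Proposition~\ref{3m} we have $\ls_\m=\I_{\lO_c}d\t\lr(t)\ N_\m(t),$ so after substituting the radial measures \er{2i} in case (i) and \er{2j} in case (ii) the problem reduces to two explicit integrals on the euclidean Jordan algebra $X_c,$ which has rank $\el,$ dimension $d'_\el,$ unit $c$ and Gindikin parameter $a.$ In both cases the product $N_c(t)^{d''_\el/\el}N_\m(t)$ is a single conical function $N_\nu(t)$ on $X_c$ indexed by the partition $\nu=\m+(d''_\el/\el,\ldots,d''_\el/\el)$ of length $\le\el;$ here $d''_\el/\el=a(r-\el)+b$ is a non-negative integer and $\nu+d'_\el/\el=\m+d_\el/\el.$

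For case (i), I would first perform the dilation $t\qi\ln^{-1/\la}t.$ Tracking the Jacobian together with the degrees of $N_c^{d''_\el/\el},$ $(t|c)^{d_\el(\la-1)}$ and $N_\m,$ this strips $\ln$ from the integrand and leaves an overall factor $\ln^{-d_\el-|\m|/\la}.$ Next I would apply the second identity of Lemma~\ref{3o} inside $X_c$ with $\lf=N_\nu$ (of total degree $m=d''_\el+|\m|$) and $\lz=d_\el(\la-1).$ The key bookkeeping is $m+d+\lz=d_\el\la+|\m|,$ hence $(m+d+\lz)/\la=d_\el+|\m|/\la,$ which produces the ratio $\lG(d_\el+|\m|/\la)/\lG(d_\el+|\m|)$ and reduces the problem to $\I_{\lO_c}e^{-(t|c)}N_\nu(t)\,dt.$ This is in turn evaluated by the Gindikin Gamma identity \er{3t} applied inside $X_c,$ giving $\lG_\el(\nu+d'_\el/\el)=\lG_\el(\m+d_\el/\el).$ Collecting the prefactor from \er{2i} produces \er{3p}; the $\el=1$ specialization follows by substituting $d_1=p-1.$

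For case (ii), the strategy is to invoke the Gindikin-Koecher Beta integral for the Jordan algebra $X_c,$
$$\I_{\lO_c\ui(c-\lO_c)}dt\ N_\nu(t)\ N_c(c-t)^{s-d'_\el/\el}=\f{\lG_\el(\nu+d'_\el/\el)\ \lG_\el(s)}{\lG_\el(\nu+s+d'_\el/\el)},$$
valid for $\Re s$ sufficiently large and $\nu$ of length $\le\el.$ Matching $s-d'_\el/\el=\ln-p$ forces $s=\ln-p+d'_\el/\el,$ and the crucial arithmetic identity is
$$p=2\f{d'_\el}\el+\f{d''_\el}\el,$$
which is immediate from $d'_\el/\el=1+\f a2(\el-1),$ $d''_\el/\el=a(r-\el)+b$ and the definition \er{1m} of the genus. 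This identity simultaneously yields $s=\ln-d_\el/\el$ and $\nu+s+d'_\el/\el=\m+\ln,$ reproducing exactly the three Gamma factors in \er{3q}, with the overall prefactor inherited from \er{2j}.

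The main obstacle is the Jordan-algebraic bookkeeping: the Gindikin Gamma integral and the Beta integral invoked above are standard formulas for an ambient Jordan algebra, but here they must be applied inside the Peirce 2-subalgebra $X_c,$ whose invariants ($d'_\el,$ $\el,$ and the parameter $a$ inherited from $Z$) differ from those of the ambient triple---which, if $b\ne 0,$ is not even a Jordan algebra. The compatibility identity $p=2d'_\el/\el+d''_\el/\el$ is precisely what makes the exponent $\ln-p$ coming from the ambient setting fit the standard Beta formula intrinsic to $X_c;$ once this arithmetic is in hand, the rest is pure substitution.
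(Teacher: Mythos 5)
Your proposal is correct and follows essentially the same route as the paper's own proof: Proposition~\ref{3m} reduces $\ls_\m$ to a moment integral on $\lO_c,$ the radial measures \er{2i}/\er{2j} and the absorption $N_c(t)^{d''_\el/\el}N_\m(t)=N_{\m+d''_\el/\el}(t)$ put it in standard form, and then \er{3t} (after the dilation $t\mapsto\ln^{-1/\la}t$) in case~(i), respectively the Beta integral \cite[Theorem~VII.1.7]{FK} applied to $X_c$ together with $p=(d'_\el+d_\el)/\el$ in case~(ii), give the result. The arithmetic identity you single out, $p=2d'_\el/\el+d''_\el/\el,$ is exactly the paper's $p=(d'_\el+d_\el)/\el,$ so there is no divergence in the argument.
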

\begin{proof} (i) Put $y:=\ln^{1/\la}t.$ In view of \er{2i} it follows that $\f{\lG_\el(d/r)\ \lG _\el(ar/2)}{\lG_\el(a\el/2)}\ls_\m$ equals
\be{3r}\la^{d_\el+1}\I_{\lO_c}dt\ e^{-\ln(t|c)^\la}\ (t|c)^{d_\el(\la-1)}\ N_{\m+d''_\el/\el}(t)
=\f{\la^{d_\el+1}}{\ln^{d_\el+\f{|\m|}\la}}\I_{\lO_c}dy\ e^{-(y|c)^\la}\ (y|c)^{d_\el(\la-1)}\ N_{\m+d''_\el/\el}(y).\ee
Since $|\m+\f{d''_\el}\el|+d'_\el=|\m|+d_\el$ and $\m+\f{d''_\el}\el+\f{d'_\el}\el=\m+\f{d_\el}\el,$ the assertion \er{3p} follows from \er{3t} applied to $X_c.$\\
(ii) In view of \er{2j} it follows that $\f{\lG_\el(d/r)\ \lG _\el(ar/2)}{\lG_\el(a\el/2)}\ls_\m$ equals
\be{3s}\I_{\lO_c\ui(c-\lO_c)}dt\ N_c(t)^{d''_\el/\el}\ N_c(c-t)^{\ln-p}\ N_\m(t)
=\I_{\lO_c\ui(c-\lO_c)}dt\ N_c(c-t)^{\ln-p}\ N_{\m+d''_\el/\el}(t).\ee
Evaluating this beta-integral via \cite[Theorem VII.1.7]{FK}, applied to $\lO_c,$ the assertion \er{3q} follows with
$p=\f{d'_\el+d_\el}\el.$
\end{proof}

Combining these results with Propositions~\ref{3j} and~\ref{3m} yields the Peter-Weyl decomposition of the reproducing kernels~$\KL^\ln.$

We now treat briefly the {\bf exceptional case} where $Z$ is of tube type and $\el=r.$ The corresponding determinant function $N$ does not vanish on $\r Z_r$. Thus all negative powers of $N$ are holomorphic on $\r Z_r$ but do not have a continuous extension to $Z=Z_r$. In this case the codimension is $d_r-d_{r-1}=1$ and the Peirce manifold $M_r=\{Z\}$ is just a point. The expansion \er{3h} gets replaced~by
$$\hf=\S_{m\in\Zl^r_+}\hf_\m,$$
where the summation extends over all of $\Zl^r_+:=\{(m_1,m_2,\dots,m_r)\in\Zl^r:\ m_1\ge m_2\ge\dots\ge m_r\}$ (i.e.~`negative' signatures are also allowed); see~\cite[Chapter~XII.3]{FK}. Here $\hf_\m$ belongs to the space $\PL_\m(Z)$ defined for $m_r<0$~as
$$\PL_\m(Z)=N^{m_r}\PL_{\m-m_r}(Z),$$
where $N$ is the determinant function on~$Z$. Similarly, one extends the definition of the spherical polynomials $\lF_\m$ on $Z$ to
$m_r<0$ by setting
$$\lF_\m(z)=N(z)^{m_r}\lF_{\m-m_r}(z)$$
for $z\in\r Z_r=\{z\in Z:\ N(z)\ne 0\}$. The~functions
$$H^\m(z,w):=(d/r)_\m E^\m(z,w),\qquad\m\in\Nl^r_+,$$
satisfy
\be{3u}H^{\m+k}(z,w)=N(z)^k H^\m(z,w)\o{N(w)}^k,\qquad\forall\m\in\Nl^r_+, \forall k\in\Nl.\ee
Setting
$$H^\m(z,w):=N(z)^{m_r}H^{\m-m_r}(z,w)\o{N(w)}^{m_r},\qquad\m\in\Zl^r_+,$$
thus extends the definition of $H^\m$ also to $\m\in\Zl^r_+$, and \er{3u} still holds (with any $\m\in\Zl^r_+,\ k\in\Zl$).
Note that $H^\m(z,e)=d_\m\lF_\m(z)$, where as before $d_\m=\dim\PL_\m(Z)$.

With this notation, all the results above in this section carry over also to the present setting. The~proofs are straightforward modifications of the ones given previously, hence omitted.

In~particular, for a coefficient `sequence' $\ls=(\ls_\m)_{\m\in\Zl^r_+}$ of positive numbers, consider the Hilbert space $\HL_\ls$ of holomorphic functions on $\r Z_r$  $($or a suitable $K$-invariant open subset$)$, endowed with the inner product
$$(\hf|\hq)_\ls:=\S_{\m\in\Zl^r_+}\ls_\m(\hf_\m|\hq_\m)_{S_r},$$
where $\hf=\S_{\m\in\Zl^r_+}\hf_\m$ is the Peter-Weyl decomposition of a holomorphic function $\hf$.

\begin{proposition} $\HL_\ls$ has the reproducing kernel
$$\KL^\ls(z,w)=\S_{\m\in\Zl^r_+}\f{H^\m(z,w)}{\ls_\m},\qquad z,w\in\r Z_r.$$
As a special case, for the Hilbert space $H^2(\r Z_r,d\lr)$ of all holomorphic functions on $\r Z_r$ (or~a suitable $K$-invariant open subset) which are square-integrable with respect to the measure~\er{2a}, the coefficients are $\ls_\m=\I_\lO d\t\lr\ N_\m,$ and the sum extends over all $\m\in\Zl^r_+$ for which the integral is finite.
\end{proposition}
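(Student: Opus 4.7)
The plan is to mirror the proofs of Propositions~\ref{3j} and~\ref{3m}, using the extended kernels $H^\m$ in place of $(d/r)_\m E^\m$. The one substantive new ingredient is an extension of the Schur-orthogonality calculation of Lemma~\ref{3f} to arbitrary $\m\in\Zl^r_+$; the key geometric input is that, since $Z$ is of tube type and $\el=r$, every maximal tripotent $s\in S_r$ satisfies $|N(s)|=1$, so the extra factor $N(z)^{m_r}\o{N(w)}^{m_r}$ that distinguishes $H^\m$ from $H^{\m-m_r\je}$ is unimodular on~$S_r$.

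First I would prove the analog of Lemma~\ref{3f}: for $\m,\n\in\Zl^r_+$,
\begin{equation*}
\I_K dk\, H^\m(z,ke)\,\o{H^\n(w,ke)} \;=\; \ld_{\m,\n}\, H^\m(z,w),
\end{equation*}
using $c=e$ and $H^\m(e,e)=d_\m\lF_\m(e)=d_\m$ to absorb the normalization of Lemma~\ref{3f}. Writing $\m^\circ:=\m-m_r\je\in\Nl^r_+$ and $\n^\circ:=\n-n_r\je\in\Nl^r_+$, the identity $H^\m(z,w)=N(z)^{m_r}H^{\m^\circ}(z,w)\o{N(w)}^{m_r}$ reduces the integrand to the non-negative-signature case times the $k$-dependent factor $N(ke)^{m_r-n_r}$ (a unitary character value, since $N\oc k=\lc(k)N$ for a character $\lc:K\to U(1)$ of modulus one). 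When $m_r=n_r$ this factor equals one and the integral reduces to the one computed in Lemma~\ref{3f}, yielding $\ld_{\m^\circ,\n^\circ}H^\m(z,w)=\ld_{\m,\n}H^\m(z,w)$; when $m_r\ne n_r$, the orthogonality of characters over~$K$ forces the integral to vanish.

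With this extended orthogonality, the reproducing-kernel formula follows exactly as in Proposition~\ref{3j}: the ansatz $\KL^\ls(z,w)=\S_{\m\in\Zl^r_+}c_\m H^\m(z,w)$ together with $\KL^\ls(z,w)=(\KL^\ls_z|\KL^\ls_w)_\ls$ yields, by termwise matching, $c_\m=\ls_\m c_\m^2$, hence $c_\m=1/\ls_\m$. For the moment formula, polar decomposition~\er{2a} combined with the extended orthogonality produces
\begin{equation*}
(H^\m_z|H^\m_w)_\lr \;=\; H^\m(z,w)\I_\lO d\t\lr(t)\,N_\m(t),
\end{equation*}
after extending $N_\m$ to $\m\in\Zl^r_+$ via its defining product (well-posed because $N>0$ on $\lO$) and invoking the $L_e$-averaging identity~\er{3n}, which carries over verbatim since $N$ is $L_e$-invariant. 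The principal obstacle is justifying each step when $m_r<0$; this rests on two facts already built into the setup, namely that $N$ is nowhere zero on $\r Z_r$ and is positive on $\lO$ (so $H^\m$ and $N_\m$ are bona fide holomorphic, respectively smooth positive, objects), and that $|N|\equiv 1$ on~$S_r$ (so the $S_r$-inner product of $H^\m_z$ with itself remains finite and the orthogonality survives passage to negative signatures).
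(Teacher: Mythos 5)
Your proposal is correct and is essentially the "straightforward modification" of Lemma~\ref{3f}, Proposition~\ref{3j} and Proposition~\ref{3m} that the paper alludes to and omits; the structure (extend the Schur-orthogonality lemma to $\Zl^r_+$, then run the kernel-ansatz and the polar-decomposition moment calculation verbatim) is exactly right.

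One step is stated loosely: the vanishing of $\I_K dk\, H^\m(z,ke)\,\o{H^\n(w,ke)}$ when $m_r\ne n_r$ is not literally "orthogonality of characters" over $K$, since the integrand is the character $N(ke)^{n_r-m_r}$ multiplied by the $k$-dependent product $H^{\m^\circ}(z,ke)\,H^{\n^\circ}(ke,w)$, not by a constant. What actually makes it vanish is the same Schur orthogonality used in Lemma~\ref{3f}: $\PL_\m(Z)=N^{m_r}\PL_{\m^\circ}(Z)$ is still an irreducible $K$-module (the one-dimensional twist of $\pi_{\m^\circ}$ by $\lc^{-m_r}$, where $N\oc k=\lc(k)N$), and the modules $\PL_\m(Z)$, $\m\in\Zl^r_+$, are pairwise inequivalent because their highest weights differ; so the original proof of Lemma~\ref{3f} applies verbatim to the reproducing kernels $H^\m$. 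Equivalently and more economically, one can avoid the character factor altogether by shifting both signatures by $k:=\min(m_r,n_r)$: then $\m-k\je,\n-k\je\in\Nl^r_+$, $H^\m(z,ke)\o{H^\n(w,ke)}=N(z)^k\o{N(w)}^k\,|N(ke)|^{2k}\,H^{\m-k\je}(z,ke)\o{H^{\n-k\je}(w,ke)}$, and $|N(ke)|=1$ kills the extraneous factor, reducing immediately to Lemma~\ref{3f}. Either repair is a one-line change; the rest of your argument, in particular the appearance of $N(t)^{m_r}$ (rather than a unimodular factor) in the radial computation and the passage from $\lF_\m$ to $N_\m$ via the $L_e$-averaging identity \er{3n}, is handled correctly.
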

Here $\ls_\m$ can be allowed to take the value $+\oo$, in~which case $\f1{\ls_\m}$ is interpreted as~0, and $\hf_\m=0$ for any
$\hf\in\HL_\ls$. Note in particular that for $\el=r$ and $b=0$, the integrals \er{3r} and \er{3s} are infinite if some $m_j<0$ \cite[Theorem~VII.1.1]{FK}.

\section{Universal differential operator and hypergeometric functions}
In this section we express the reproducing kernels in a more conceptual way and relate them to hypergeometric functions. By polarization and $K$-invariance, it suffices to express the reproducing kernel $\KL^\ls$ on the diagonal of $\lO_c.$ Consider the
{\bf universal differential operator}
\be{4a}\DL_\el:=N_c^{\f a2(\el-r)}\dl_{N_c}^b N_c^{\f a2(r-\el-1)+b+1}\(\dl_{N_c}N_c^{\f a2}\dl_{N_c}^{a-1}\)^{r-\el}N_c^{\f a2(r-\el+1)-1}\ee
on $\lO_c,$ which has order $\el((r-\el)a+b)=d''_\el$ and is independent of the Hilbert space chosen.

The following theorem is our main theorem in the general setting.
\begin{theorem}\label{4f} Consider coefficients $(\ls_\m)_{\m\in\Nl_+^\el},$ with $\ls_\m=\I_{\lO_c}d\t\lr\ N_\m$ for radial measures. Then the reproducing kernel $\KL$ satisfies
\be{4h}\KL^\ls(\F t,\F t)=\f{\lG_\el(a\el/2)}{\lG_\el(d/r)\ \lG_\el(ar/2)}\ \DL_\el\(\S_{\m\in\Nl_+^\el}\f{\lG_\el(\m+d'_\el/\el)}{\ls_\m}\ E_c^\m(t)\).\ee
\end{theorem}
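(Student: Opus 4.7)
My plan is to combine the Peter--Weyl expansion of $\KL^\ls$ from Proposition~\ref{3j} with an explicit computation of $\DL_\el$ on spherical polynomials of $X_c$, reducing the theorem to a universal per-partition identity that does not involve the measure $\lr$. By Proposition~\ref{3j},
$$\KL^\ls(\F t,\F t)=\S_{\m\in\Nl^\el_+}\f{(d'_\el/\el)_\m}{\ls_\m}\f{d_\m}{d_\m^c}\,E^\m(\F t,\F t).$$
Using $\lG_\el(\m+d'_\el/\el)=\lG_\el(d'_\el/\el)\,(d'_\el/\el)_\m$ and formally exchanging $\DL_\el$ with the (termwise) summation, the theorem becomes the per-partition identity
$$\DL_\el\,E_c^\m(t)=\f{\lG_\el(d/r)\,\lG_\el(ar/2)}{\lG_\el(a\el/2)\,\lG_\el(d'_\el/\el)}\;\f{d_\m}{d_\m^c}\,E^\m(\F t,\F t),\qquad\m\in\Nl_+^\el.$$
Both sides are $L_c$-invariant polynomials on $X_c$ of total degree $|\m|$, hence proportional to the spherical polynomial $\lF_\m^c(t)$ of $X_c$; the task reduces to matching the scalar factors.

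Next, I would compute $\DL_\el E_c^\m(t)$ by peeling off its constituent factors from right to left. Writing $E_c^\m(t)=(d_\m^c/(d'_\el/\el)_\m)\,\lF_\m^c(t)$ via~(\ref{3c}) applied to $Z_c^2,$ and using the Bernstein--Sato identity
$$\dl_{N_c}\(N_c^\la\,\lF_\m^c\)=\(\P_{j=1}^\el\,(\la+m_j-\tfrac a2(j-1))\)\,N_c^{\la-1}\,\lF_\m^c$$
for the Cayley operator $\dl_{N_c}=N_c(\dl)$ on $L_c$-invariant functions of $X_c$, each of the $r-\el$ iterations of $\dl_{N_c}N_c^{a/2}\dl_{N_c}^{a-1}$ contributes $a\el$ linear Pochhammer factors, the outer $\dl_{N_c}^b$ contributes $b\el$ more, and the outer powers of $N_c$ restore the total degree to $|\m|$ (consistent with $\operatorname{ord}\DL_\el=d''_\el$). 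Using the Gauss multiplication formula these Pochhammer products telescope into a single Gindikin-$\lG_\el$ ratio times $\lF_\m^c(t)$, of exactly the form required by the third step.

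Finally, I would identify the result with $(d_\m/d_\m^c)\,E^\m(\F t,\F t)$. Since $k\F t=\F{kt}$ for $k\in L_c\ic K$, the two-variable $K$-invariance of $E^\m$ implies that $t\mapsto E^\m(\F t,\F t)$ is $L_c$-invariant on $X_c$ and polynomial of degree $|\m|$, hence a scalar multiple of $\lF_\m^c(t)$; specializing at $t=c$ (so $\F c=c$) and using $E^\m(c,c)=d_\m^c/(d'_\el/\el)_\m$ with the normalization $\lF_\m^c(c)=1$ pins down this scalar, which combined with the output of the second step yields the target Gamma ratio. The main obstacle is the Pochhammer telescoping: the operator $\DL_\el$ has been crafted precisely so that the many shifts produced by the $r-\el$ iterated applications of $\dl_{N_c}N_c^{a/2}\dl_{N_c}^{a-1}$ and by $\dl_{N_c}^b$ reassemble, via the Gauss multiplication and Legendre duplication formulas, into exactly the four Gindikin factors $\lG_\el(\m+d/r)$, $\lG_\el(\m+ar/2)$, $\lG_\el(\m+a\el/2)$, $\lG_\el(\m+d'_\el/\el)$ demanded by the theorem, and verifying this combinatorial identity is the technical heart of the argument.
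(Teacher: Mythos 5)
Your strategy mirrors the paper's: expand $\KL^\ls$ via Proposition~\ref{3j}, pull $\DL_\el$ inside the sum, and reduce everything to a per-partition eigenvalue identity for $\DL_\el$ on $E_c^\m$ (a scalar multiple of $\lF_\m^c$), to be verified via the factorization of $\DL_\el$ and iterated Cayley identities, plus the identification $E^\m(\F t,\F t)=E^\m(c,c)\,\lF_\m^c(t)$ with $E^\m(c,c)=d_\m^c/(d'_\el/\el)_\m$. That is exactly the skeleton of the paper's proof. However, there are two concrete problems with the way you propose to carry out the central computation.

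First, the Bernstein--Sato identity you quote has the wrong eigenvalue. The correct Cayley identity on $X_c$ (this is the paper's \eqref{4u}, taken from \cite[p.~296]{FK1}) reads
\[
\dl_{N_c}\bigl(N_c^\la\,\lF_\m^c\bigr)=\Bigl(\textstyle\P_{j=1}^\el\bigl(\la+m_j+\tfrac a2(\el-j)\bigr)\Bigr)\,N_c^{\la-1}\,\lF_\m^c ,
\]
i.e.\ the linear factor is $\la+m_j+\tfrac a2(\el-j)$ (equivalently $\la+\tfrac a2(\el-1)+m_j-\tfrac a2(j-1)$), not your $\la+m_j-\tfrac a2(j-1)$; the two differ by the shift $\tfrac a2(\el-1)$. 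A quick sanity check: for $\el=2$, $a=1$, $\m=0$ one computes directly on $\operatorname{Sym}_2(\Rl)$ that $\dl_{N}N^\la=\la(\la+\tfrac12)N^{\la-1}$, which your formula would give as $\la(\la-\tfrac12)$. Carried through the factorization \eqref{4q} of $\DL_\el$, this sign error shifts every eigenvalue and the product will not reproduce $A_\m$.

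Second, the "Pochhammer telescoping'' is only asserted, and what it actually requires is never identified. Matching the product of eigenvalues with the right-hand side of your per-partition identity needs an explicit product formula for $d_\m/d_\m^c$; the paper obtains this from the dimension formula \eqref{4c} of \cite[Lemmas~2.6--2.7]{U2} and then rewrites it as the Gindikin-$\lG_\el$ ratio \eqref{4b} before comparing with the eigenvalues of \eqref{4q}. Your plan skips this ingredient entirely, and the appeal to the Gauss multiplication and Legendre duplication formulas is a red herring --- the actual manipulation is just a cancellation of adjacent Gamma factors plus the definition \eqref{3v} of $\lG_\el$. So while the reduction in your first paragraph and the identification in your last paragraph are both correct, the "technical heart'' that you defer is exactly where the argument currently fails: it rests on a wrong eigenvalue formula and on an unacknowledged use of the explicit dimension formula.
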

\begin{proof} By \cite[Lemma 2.6 and Lemma 2.7]{U2} we have $d_\m:=\dim \PL_\m(Z)=\f{(d/r)_\m}{(d'/r)_\m}\ d'_\m,$ where
\be{4c}d'_\m=\P_{i<j}\f{m_i-m_j+\f a2(j-i)}{\f a2(j-i)}\f{(m_i-m_j+1+\f a2(j-i-1))_{a-1}}{(1+\f a2(j-i-1))_{a-1}}\ee
is the `tube type' dimension and the factor
$$\f{(d/r)_\m}{(d'/r)_\m}=\f{(1+\f a2(r-1)+b)_\m}{(1+\f a2(r-1))_\m}=\P_{j=1}^r \f{(1+\f a2(r-j)+m_j)_b}{(1+\f a2(r-j))_b}$$
occurs only for non-tube type domains ($b>0$). If $m_j=0$ for $j>\el$ we obtain
$$\f{d_\m}{d_\m^c}=\P_{i=1}^\el \f{(m_i+1+\f a2(r-i))_b}{(1+\f a2(r-i))_b}
\P_{1\le i\le\el<j\le r}\f{m_i+\f a2(j-i)}{\f a2(j-i)}\f{(m_i+1+\f a2(j-i-1))_{a-1}}{(1+\f a2(j-i-1))_{a-1}}=\f{A_\m}{A_0},$$
where $A_\m:=\P_{i=1}^\el(m_i+1+\f a2(r-i))_b\P_{1\le i\le\el<j\le r}(m_i+\f a2(j-i))(m_i+1+\f a2(j-i-1))_{a-1}.$ We have
\be{4g}\P_{i=1}^\el(m_i+1+\f a2(r-i))_b=\P_{i=1}^\el\f{\lG(m_i+1+b+\f a2(r-i))}{\lG(m_i+1+\f a2(r-i))}
=\f{\lG_\el(\m+d/r)}{\lG_\el(\m+d'/r)},\ee
since $\f dr=1+b+\f a2(r-1)$ and $\f{d'}r=1+\f a2(r-1).$ Moreover, for each $i\le\el,$
$$\P_{j=\el+1}^r(m_i+\f a2(j-1))\ (m_i+1+\f a2(j-i-1))_{a-1}=\P_{j=\el+1}^r(m_i+\f a2(j-1))\f{\lG(m_i+\f a2(j-i+1))}
{\lG(m_i+1+\f a2(j-i-1))}$$
$$=\P_{j=\el+1}^r\f{m_i+\f a2(j-1)}{m_i+\f a2(j-i-1)}\ \f{\lG(m_i+\f a2(j-i+1))}{\lG(m_i+\f a2(j-i-1))}
=\f{m_i+\f a2(r-1)}{m_i+\f a2(\el-1)}\ \f{\lG(m_i+\f a2(r-i))}{\lG(m_i+\f a2(\el-i))}\ \f{\lG(m_i+\f a2(r+1-i))}{\lG(m_i+\f a2(\el+1-i))}$$
$$=\f{\lG(1+m_i+\f a2(r-i))}{\lG(1+m_i+\f a2(\el-i))}\ \f{\lG(m_i+\f a2(r+1-i))}{\lG(m_i+\f a2(\el+1-i))}.$$
Taking the product over $1\le i\le\el$ yields, together with \er{4g},
\be{4b}A_\m=\f{\lG_\el(\m+d/r)}{\lG_\el(\m+d'/r)}\ \f{\lG_\el(\m+d'/r)\ \lG_\el(\m+ar/2)}{\lG_\el(\m+d'_\el/\el)\ \lG_\el(\m+a\el/2)}
=\f{\lG_\el(\m+d/r)\ \lG_\el(\m+ar/2)}{\lG_\el(\m+d'_\el/\el)\ \lG_\el(\m+a\el/2)}.\ee
Putting $\<\lb\>^\m:=\P_{i=1}^\el(\lb+m_i-\f a2(i-1)),$ we also have
$$A_\m=\P_{t=1}^b\<t+\f a2(r-1)\>^\m\P_{j>\el}\<\f a2(j-1)\>^\m\P_{s=1}^{a-1}\<s+\f a2(j-2)\>^\m.$$
In view of \er{3n} the $L_c$-invariant differential operator $D_\la:=N_c^{1-\la}\dl_{N_c}N_c^\la$ of order $\el$ on $\lO_c$ satisfies
\be{4u}D_\la\ E_c^\m=\<\la+\f a2(\el-1)\>^\m\ E_c^\m\ee
for all $\m\in\Nl_+^\el,$ since $D_\la\ N_\m=\<\la+\f a2(\el-1)\>^\m\ N_\m$ by \cite[p. 296]{FK1} applied to $X_c.$ The identity
$\P_{s=1}^{a-1}D_{s+\lb}=N_c^{-\lb}\dl_{N_c}^{a-1}\ N_c^{a+\lb-1}$ yields a factorization
\be{4q}\DL_\el=N_c^{\f a2(\el-r)}\ \dl_{N_c}^b\ N_c^{\f a2(r-\el)+b}
\ \P_{k=1}^{r-\el}N_c^{1-\f a2 k}\ \dl_{N_c}\ N_c^{\f a2}\ \dl_{N_c}^{a-1}\ N_c^{\f a2(1+k)-1}$$
$$=N_c^{\f a2(\el-r)}\dl_{N_c}^b N_c^{\f a2(r-\el)+b}
\P_{k=1}^{r-\el}D_{\f a2 k}\ N_c^{\f a2(1-k)}\ \dl_{N_c}^{a-1}\ N_c^{\f a2(1+k)-1}
=\P_{t=1}^b D_{t+\f a2(r-\el)}\P_{k=1}^{r-\el}D_{\f a2 k}\(\P_{s=1}^{a-1} D_{s+\f a2(k-1)}\)$$
$$=\(\P_{t=1}^b D_{t+\f a2(r-\el)}\)\P_{j=\el+1}^r\(D_{\f a2(j-\el)}\P_{s=1}^{a-1} D_{s+\f a2(j-\el-1)}\).\ee
Therefore $\DL_\el\ E_c^\m=A_\m\ E_c^\m=A_0\ \f{d_\m}{d_\m^c}\ E_c^\m.$ Expressing $A_0$ via \er{4b}, the assertion follows with
\er{3k}, since $E^\m(\F t,\F t)=E^\m_c(t).$
\end{proof}


For $\el=1$ a slightly different representation, with a simpler proof, can be given.

\begin{theorem}\label{4i} Let $\el=1.$ Consider coefficients $(\ls_m)_{m\ge 0},$ with $\ls_m=\I_0^\oo d\t\lr(t)\ t^m$ for radial measures. Then $\HL_\ls$ has the reproducing kernel
\be{4j}\KL_1^\ls(z,w)=\f{\lG(a/2)}{\lG(d/r)\ \lG(ar/2)}\ t^{\f{1-r}2a}\ \dl_t^b\ t^{\f{r-1}2a+b}\ \P_{j=2}^r\f{D_j t+tD_j}2
\(\S_{m\ge 0}\f{t^m}{\ls_m}\)\big|_{t=(z|w)},\ee
where $D_j:=t^{a-\f a2 j}\ \dl_t^a\ t^{\f a2 j-1}$ is a differential operator of order $a.$
\end{theorem}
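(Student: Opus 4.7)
The plan is to verify \er{4j} coefficient-by-coefficient in the variable $t$. By Proposition~\ref{3j} (formula \er{3l}), one has $\KL_1^\ls(z,w)=\S_{m\ge 0}\f{d_{(m)}}{\ls_m}(z|w)^m$. Since each factor of the differential operator
$$\mathcal{O}\;:=\;t^{\f{1-r}2a}\,\dl_t^b\,t^{\f{r-1}2a+b}\,\P_{j=2}^r\f{D_jt+tD_j}{2}$$
preserves the space of polynomials in $t$, the right-hand side of \er{4j} is a formal power series in $t=(z|w)$. Hence it suffices to establish the pointwise eigenvalue identity
$$\mathcal{O}\,t^m\;=\;\f{\lG(d/r)\,\lG(ar/2)}{\lG(a/2)}\,d_{(m)}\,t^m\qquad(m\ge 0).$$

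A direct differentiation shows that $D_j$ shifts degree by $-1$: $D_jt^k=C_j(k)\,t^{k-1}$, where $C_j(k):=(k+\f a2 j-1)(k+\f a2 j-2)\cdots(k+\f a2 j-a)$. Consequently both $D_jt$ and $tD_j$ are diagonal on monomials, with eigenvalues $C_j(m+1)$ and $C_j(m)$ on $t^m$ respectively. Using the relation $C_j(m+1)=\f{m+\f a2 j}{m+\f a2 j-a}\,C_j(m)$ and cancelling the common factor $m+\f a2 j-a=m+\f a2(j-2)$ that already appears in $C_j(m)$, the symmetrised eigenvalue simplifies to
$$\f{D_jt+tD_j}{2}\,t^m\;=\;\Big(m+\f a2(j-1)\Big)\,\big(m+1+\f a2(j-2)\big)_{a-1}\,t^m.$$
Similarly, the outer prefix $t^{\f{1-r}2a}\dl_t^b t^{\f{r-1}2a+b}$ acts diagonally on $t^m$ with eigenvalue $(m+1+\f a2(r-1))_b$.

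The total eigenvalue of $\mathcal{O}$ on $t^m$ is then the product of these factors. Up to the $m$-independent constant
$Q:=\P_{j=2}^r\f a2(j-1)\,(1+\f a2(j-2))_{a-1}$,
this reproduces exactly the two families of factors in formula \er{4c} for $d'_{(m)}$ applied to the partition $\m=(m,0,\ldots,0)$. Writing $\f a2(j-1)=\lG(1+\f a2(j-1))/\lG(\f a2(j-1))$ and $(1+\f a2(j-2))_{a-1}=\lG(\f a2 j)/\lG(1+\f a2(j-2))$ decomposes $Q$ into two telescoping products, with values $\lG(d'/r)$ and $\lG(ar/2)/\lG(a/2)$ respectively. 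Combined with the identities $(m+1+\f a2(r-1))_b=\f{\lG(d/r)}{\lG(d'/r)}\,\f{(d/r)_m}{(d'/r)_m}$ and $d_{(m)}=\f{(d/r)_m}{(d'/r)_m}\,d'_{(m)}$ already used in the proof of Theorem~\ref{4f}, every Gamma factor cancels against the prefactor in \er{4j} and the eigenvalue becomes precisely $d_{(m)}\,t^m$. The only real obstacle is the Gamma-bookkeeping needed to see the telescoping of $Q$; the simplification over Theorem~\ref{4f} is conceptual, since at $\el=1$ the multi-variate Peter--Weyl sum collapses to a single-variable power series and the symmetrised operator $\f{D_jt+tD_j}{2}$ can replace the conical factor $D_{\f a2(j-1)}\P_{s=1}^{a-1}D_{s+\f a2(j-2)}$ from the factorisation \er{4q} of $\DL_1$ without invoking spherical polynomials.
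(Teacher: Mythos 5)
Your proof is correct and takes essentially the same approach as the paper's: both reduce, via \er{3l}, to verifying that the differential operator acts diagonally on $t^m$ with eigenvalue $\f{\lG(d/r)\lG(ar/2)}{\lG(a/2)}\,d_{(m)}$, both carry out the identical computation $(D_jt+tD_j)t^m$ via \er{4k}, and both do the same Gamma/telescoping bookkeeping to identify the constant with $2^{1-r}C'$ from \er{4e}. The only difference is presentational (you compute the operator eigenvalues first and then match against the dimension formula \er{4c}, while the paper manipulates $d'_{(m)}$ into the symmetrised-eigenvalue form first), which does not change the substance.
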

\begin{proof} For partitions of length 1, putting $m_1=m$ and $m_i=0$ for $i>1,$ by \er{4c} we obtain the (tube type) dimension
$$d'_{(m)}=\P_{j=2}^r\f{m+\f a2(j-1)}{\f a2(j-1)}\ \f{(m+1+\f a2(j-2))_{a-1}}{(1+\f a2(j-2))_{a-1}}
=\P_{j=2}^r\f{m+\f a2(j-1)}{\f a2(j-1)}\ \f{\lG(m+\f a2j)}{\lG(\f a2j)}\ \f{\lG(1+\f a2(j-2))}{\lG(m+1+\f a2(j-2))}$$
$$=\P_{j=2}^r\f{\lG(1+\f a2(j-2))}{a(j-1)\ \lG (\f a2j)}\P_{j=2}^r\f{(2m+a(j-1))\ \lG(m+\f a2j)}{\lG(m+1+\f a2(j-2))}
=\f1{C'}\P_{j=2}^r\(\f{\lG(m+\f a2j)}{\lG(m+\f a2(j-2))}+\f{\lG(m+1+\f a2j)}{\lG(m+1+\f a2(j-2))}\)$$
where
\be{4e}C'=a^{r-1}(r-1)!\P_{j=2}^r\f{\lG(\f a2j)}{\lG(1+\f a2(j-2))}
=2^{r-2}a\f{(r-1)!}{(r-2)!}\f{\lG(\f a2(r-1))\ \lG(\f{ar}2)}{\lG(\f a2)}=2^{r-1}\f{\lG(\f{d'}r)\ \lG(\f{ar}2)}{\lG(\f a2)}.\ee
For $\lb\in\Rl$ we have
\be{4k}\dl_t^a t^\lb=\lb(\lb-1)\cdots(\lb+1-a)\ t^{\lb-a}=\f{\lG(1+\lb)}{\lG(1+\lb-a)}\ t^{\lb-a}.\ee
It follows that
$$(D_j t+tD_j)t^m=t^{a-\f a2 j}\ \dl_t^a\ t^{m+\f a2 j}+t^{1+a-\f a2 j}\ \dl_t^a\ t^{m-1+\f a2 j}
=\(\f{\lG(m+\f a2j)}{\lG(m+\f a2(j-2))}+\f{\lG(m+1+\f a2j)}{\lG(m+1+\f a2(j-2))}\)\ t^m.$$
Since $d_{(m)}=\f{(d/r)_m}{(d'/r)_m}\ d'_{(m)}=\f1C\ (\f{r-1}2a+1+m)_b\ d'_{(m)},$ with $C:=C'\ (1+\f a2(r-1))_b=2^{r-1}\f{\lG(d/r)\ \lG(ar/2)}{\lG(a/2)},$ it follows that
$$\f1C\(t^{\f{1-r}2a}\ \dl_t^b\ t^{\f{r-1}2a+b}\)\P_{j=2}^r(D_j t+tD_j)t^m=d_{(m)} t^m.$$
In view of \er{3l}, the assertion follows since $(\F t c|\F t c)=t.$
\end{proof}
Since $t^{\f{1-r}2a}\ \dl_t^b\ t^{\f{r-1}2a+b}\P_{j=2}^r(D_j t+tD_j)=2^{r-1}\DL_1,$ \er{4j} agrees with Theorem~\ref{4f}.

Let $X$ be a euclidean Jordan algebra, of dimension $d$ and rank~$r.$ For complex parameters $\la_1,\ldots,\la_p$ and
$\lb_1,\ldots,\lb_q,$ the {\bf hypergeometric function} is the formal power series
\be{4l}{}_p F_q\Hy{\la_1,\ldots,\la_p}{\lb_1,\ldots,\lb_q}z=\S_{\m\in\Nl^r_+}\f{(\la_1)_\m\ldots(\la_p)_\m}{(\lb_1)_\m\ldots(\lb_1)_\m}\ E^\m(z,e)=\S_{\m\in\Nl^r_+}\f{(\la_1)_\m\ldots(\la_p)_\m}{(\lb_1)_\m\ldots(\lb_1)_\m}\ \f{d_\m}{(\f dr)_\m}\lF_\m(z),\ee
assuming that the Pochhammer symbols in the denominator don't vanish for all $\m\in\Nl_+^r.$ For $p\le q$ this series defines an entire function on $X^\Cl;$ for $p=q+1$ it converges on the unit ball $\c X{}^\Cl,$ and for $p>q$ it has empty domain of convergence. For
$p=q=1$ we obtain the {\bf confluent hypergeometric function}
$${}_1F_1\Hy\la\lb z=\S_{\m\in\Nl^r_+}\f{(\la)_\m}{(\lb)_\m}\ E^\m(z,e)\quad\forall\ z\in X^\Cl.$$

\begin{corollary}\label{4m} For the K\"ahler potentials $\lf_\el$ introduced above the reproducing kernel $\KL_\el^\ln$ associated with the measure \er{2k} satisfies
$$\KL_\el^\ln(\F t,\F t)=\DL_\el\ \FL_\el^\ln(t),$$
where (i) for the potential $\lf_\el(w)=(w|w)^\la$ on $\r Z_\el$ we have
\be{4d}\FL_\el^\ln(t)=\f{\ln^{d_\el}}{\la^{d_\el}}
\S_{\m\in\Nl^\el_+}\f{\lG_\el(\m+d'_\el/\el)}{\lG_\el(\m+d_\el/\el)}\ \f{\lG(d_\el+|\m|)}{\lG(d_\el+|\m|/\la)}\ E_c^\m(\ln^{1/\la}t)\ee
for $t\in\lO_c,$ simplifying to
\be{4n}\FL_\el^\ln(t)=\ln^{d_\el}\f{\lG_\el(d'_\el/\el)}{\lG_\el(d_\el/\el)}\ {}_1F_1\Hy{d'_\el/\el}{d_\el/\el}{\ln t}\ee
for $\la=1,$ and (ii) for the potential $\lf_\el(w):=\log\lD(w,w)^{-p}$ on $\r Z_\el\ui\c Z,d\lr)$ we have
\be{4o}\FL_\el^\ln(t)=\f{\lG_\el(\ln)\ \lG_\el(d'_\el/\el)}{\lG_\el(\ln-d_\el/\el)\ \lG_\el(d_\el/\el)}
\ {}_2F_1\Hy{\f{d'_\el}\el;\ln}{\f{d_\el}\el}t\ee
for $t\in\lO_c\ui(c-\lO_c).$
\end{corollary}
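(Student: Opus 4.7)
The plan is to apply Theorem~\ref{4f} directly and substitute the explicit moment formulas~\er{3p} and~\er{3q} derived in the previous section, then recognize the resulting sums as the hypergeometric functions defined in~\er{4l}. Since Theorem~\ref{4f} already identifies
$$\KL_\el^\ln(\F t,\F t)=\DL_\el\!\left(\f{\lG_\el(a\el/2)}{\lG_\el(d/r)\,\lG_\el(ar/2)}\S_{\m\in\Nl_+^\el}\f{\lG_\el(\m+d'_\el/\el)}{\ls_\m}\,E_c^\m(t)\right),$$
it suffices to show that for each of the two potentials the bracketed series agrees with the claimed expression for $\FL_\el^\ln(t)$.

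For part~(i), I would insert~\er{3p} into the series. The prefactor $\f{\lG_\el(a\el/2)}{\lG_\el(d/r)\,\lG_\el(ar/2)}$ cancels the identical factor appearing in $\ls_\m$, leaving after straightforward rearrangement
$$\f{\ln^{d_\el}}{\la^{d_\el}}\S_{\m\in\Nl_+^\el}\ln^{|\m|/\la}\,\f{\lG_\el(\m+d'_\el/\el)}{\lG_\el(\m+d_\el/\el)}\,\f{\lG(d_\el+|\m|)}{\lG(d_\el+|\m|/\la)}\,E_c^\m(t).$$
Using homogeneity of $E_c^\m$ (it is homogeneous of bidegree $(|\m|,|\m|)$, hence $E_c^\m(\ln^{1/\la}t)=\ln^{|\m|/\la}\,E_c^\m(t)$), the factor $\ln^{|\m|/\la}$ is absorbed into the argument, yielding~\er{4d}. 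In the special case $\la=1$, the scalar ratio $\lG(d_\el+|\m|)/\lG(d_\el+|\m|/\la)$ reduces to~$1$, and I would rewrite the remaining Gindikin Gamma ratio through~\er{3w} as $(d'_\el/\el)_\m/(d_\el/\el)_\m$ times the constant $\lG_\el(d'_\el/\el)/\lG_\el(d_\el/\el)$. Comparing with the definition~\er{4l} of ${}_1F_1$ applied to the euclidean Jordan algebra $X_c$, which has rank $\el$ and dimension $d'_\el$ (so $d/r$ becomes $d'_\el/\el$ in~\er{4l}), and noting that the kernel $E^\m$ restricted to $Z_c^2$ is the Fischer-Fock kernel of $Z_c^2$, one obtains~\er{4n}.

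For part~(ii), I would insert~\er{3q} analogously. The same prefactor cancels, and the surviving expression becomes
$$\S_{\m\in\Nl_+^\el}\f{\lG_\el(\m+d'_\el/\el)\,\lG_\el(\m+\ln)}{\lG_\el(\ln-d_\el/\el)\,\lG_\el(\m+d_\el/\el)}\,E_c^\m(t).$$
Using~\er{3w} to rewrite each $\lG_\el(\m+\cdot)$ as a Pochhammer symbol times the constant $\lG_\el(\cdot)$, the series becomes the ${}_2F_1$ on $X_c$ with parameters $(d'_\el/\el,\ln)$ and $d_\el/\el$, up to the explicit prefactor appearing in~\er{4o}. Convergence on $\lO_c\ui(c-\lO_c)$ is guaranteed since this set lies in the spectral unit ball of $X_c^\Cl=Z_c^2$, which is the natural convergence domain of the ${}_{q+1}F_q$ type series.

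There is essentially no conceptual obstacle: the argument is bookkeeping with Pochhammer symbols once Theorem~\ref{4f} is in hand. The only point requiring care is the recognition that the hypergeometric series appearing are those intrinsic to the rank-$\el$ Jordan algebra $X_c$ (not to the ambient $Z$), so that the parameter $d/r$ in~\er{4l} must be replaced by $d'_\el/\el$, and the sum runs over $\Nl^\el_+$; this matches because $E^\m_c$ coincides with the Fischer--Fock kernel of $Z_c^2$ on polynomials supported on partitions of length $\le\el$.
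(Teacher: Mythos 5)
Your proposal is correct and follows the same route as the paper: the paper's own proof is the one-sentence remark ``In view of \er{4h}, the assertions follow with \er{3p} and \er{3q}, respectively,'' and your write-up is simply that substitution carried out explicitly, together with the (correct) bookkeeping via \er{3w} and the homogeneity of $E_c^\m$ needed to recognize the resulting series as the ${}_1F_1$ and ${}_2F_1$ of the euclidean Jordan algebra $X_c$.
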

\begin{proof} In view of \er{4h}, the assertions follow with \er{3p} and \er{3q}, respectively.
\end{proof}

\begin{corollary}\label{4p} In the rank $\el=1$ setting the reproducing kernel satisfies
$$\KL_1^\ln(z,w)=\f{\lG(a/2)}{\lG(d/r)\ \lG(ar/2)}\ t^{\f{1-r}2a}\ \dl_t^b\ t^{\f{r-1}2a+b}\P_{j=2}^r\f{D_j t+tD_j}2\FL_1^\ln(t)\big|_{t=(z|w)}$$
where (i) for the potential $\lf_1(w)=(w|w)^\la$ on $\r Z_1$ we have
\be{4r}\FL_1^\ln(t)=\f{\ln^{p-1}}{\la^{p-1}}\ E_{\f1\la,p-1}(\ln^{1/\la}t)=\f{\ln^{p-1}}{\la^{p-1}}\S_{m\ge 0}\f{\ln^{m/\la}t^m}{\lG(p-1+m/\la)}\ee
for $t>0,$ where $E_{A,B}(t):=\S_{m=0}^\oo\f{t^m}{\lG(Am+B)}$ is the generalized {\bf Mittag-Leffler function}, and (ii) for the potential $\lf_1(w)=\lD(w,w)^{-p}$ on $\r Z_1\ui\c Z$ we have
\be{4t}\FL_1^\ln(t)=\f1{\lG(\ln+1-p)}\S_{m\ge0}\f{\lG(\ln+m)}{\lG(p-1+m)}\ t^m\ee
for $0<t<1.$
\end{corollary}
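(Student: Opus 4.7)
My plan is to derive both identities by substituting the moment sequence $\ls_m$ into the rank-one reproducing-kernel formula of Theorem~\ref{4i}, and then rearranging the resulting power series into the closed forms stated in~\er{4r} and~\er{4t}. Indeed, Theorem~\ref{4i} already realizes $\KL_1^\ln(z,w)$ as the prefactor $\f{\lG(a/2)}{\lG(d/r)\,\lG(ar/2)}$ times the stated differential operator applied to the generating function $\S_{m\ge 0} t^m/\ls_m$, evaluated at $t=(z|w)$. By Proposition~\ref{3m}, the moments are $\ls_m=\I_0^\oo t^m\,d\t\lr_1^\ln(t)$ against the radial part of the K\"ahler measure~\er{2k}, so it suffices in each case to compute $\ls_m$ and identify the inverse generating series $\S_m t^m/\ls_m$ with $\FL_1^\ln(t)$.

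For part~(i), the radial density $\la^p\,t^{\la(p-1)-1}e^{-\ln t^\la}\,dt$ from the $\el=1$ specialization of~\er{2i} yields, after the substitution $u=\ln t^\la$, a standard Gamma integral giving
\begin{equation*}
\ls_m=\f{\la^{p-1}}{\ln^{p-1}}\,\f{\lG(p-1+m/\la)}{\ln^{m/\la}},
\end{equation*}
in agreement with the simplified form of~\er{3p}. Inverting term by term produces
\begin{equation*}
\S_{m\ge 0}\f{t^m}{\ls_m}=\f{\ln^{p-1}}{\la^{p-1}}\S_{m\ge 0}\f{(\ln^{1/\la}t)^m}{\lG(p-1+m/\la)}=\f{\ln^{p-1}}{\la^{p-1}}\,E_{1/\la,p-1}(\ln^{1/\la}t),
\end{equation*}
which is precisely $\FL_1^\ln(t)$ of~\er{4r} by the definition of the generalized Mittag-Leffler function.

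For part~(ii), the radial density $t^{p-2}(1-t)^{\ln-p}\,dt$ on $(0,1)$ from~\er{2j} gives the Beta integral
\begin{equation*}
\ls_m=\I_0^1 t^{m+p-2}(1-t)^{\ln-p}\,dt=B(m+p-1,\ln-p+1)=\f{\lG(m+p-1)\,\lG(\ln-p+1)}{\lG(m+\ln)},
\end{equation*}
consistent with the $\el=1$ case of~\er{3q}. Inverting term by term yields
\begin{equation*}
\S_{m\ge 0}\f{t^m}{\ls_m}=\f1{\lG(\ln-p+1)}\S_{m\ge 0}\f{\lG(\ln+m)}{\lG(p-1+m)}t^m,
\end{equation*}
which is exactly $\FL_1^\ln(t)$ of~\er{4t}. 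No substantial obstacle is expected: the only real care is in tracking the $\f{\lG(a/2)}{\lG(d/r)\,\lG(ar/2)}$ normalization, so that after substitution into Theorem~\ref{4i} the inner series matches $\FL_1^\ln(t)$ with no extraneous constant. Convergence is automatic: the Mittag-Leffler series defines an entire function of $t$ for every $\la>0$, while the power series in~(ii) converges on $|t|<1$, matching the Kepler-ball constraint $\r Z_1\ui\c Z$.
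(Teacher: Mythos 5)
Your proposal is correct in substance but follows a somewhat different route from the paper's own proof. The paper's argument is a one-liner: it specializes the formulas \er{4d} and \er{4o} of Corollary~\ref{4m} to $\el=1$, using $d_1=p-1$, $d'_1=1$ together with Lemma~\ref{3d} to rewrite $E_c^{(m)}(\ln^{1/\la}t)$ as $(\ln^{1/\la}t)^m/m!$, so that the general-rank hypergeometric expansion collapses to the Mittag-Leffler and Gauss-type scalar series. You instead go one level deeper: you recompute the moments $\ls_m$ of Proposition~\ref{3m} as ordinary Gamma and Beta integrals, and then feed them into the rank-one kernel representation of Theorem~\ref{4i}, identifying $\S_m t^m/\ls_m$ with $\FL_1^\ln(t)$. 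This bypasses the universal differential operator $\DL_\el$ and Theorem~\ref{4f} altogether in favour of the more elementary rank-one machinery; the trade-off is that you must redo the moment integrals rather than quote \er{3p}, \er{3q}. One caveat you should nail down is the normalization you yourself flag: your moment formulas match the inline $\el=1$ specializations printed after \er{2i}, \er{2j}, \er{3p}, \er{3q}, but those specializations drop the constant $\f{\lG(a/2)}{\lG(d/r)\,\lG(ar/2)}$ that the displayed formulas \er{2i}, \er{2j}, \er{3p}, \er{3q} carry. If that constant is retained in $\ls_m$, then $\S_m t^m/\ls_m=\f{\lG(d/r)\lG(ar/2)}{\lG(a/2)}\FL_1^\ln(t)$, and the factor cancels against the prefactor of Theorem~\ref{4i} (equivalently, the differential operator in the Corollary equals $\DL_1$, and Corollary~\ref{4m} already gives $\KL_1^\ln(\F t,\F t)=\DL_1\FL_1^\ln(t)$ with no prefactor). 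So before asserting ``no extraneous constant,'' you should decide which of the two conventions for $\ls_m$ is being used, since the displayed prefactor in Corollary~\ref{4p} is only consistent with the prefactor-free $\ls_m$; with the $\lG$-normalized $\ls_m$ the prefactor should not appear.
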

\begin{proof} In view of \er{3e}, \er{4d} simplifies to \er{4r}, since $d_1=p-1$ and $d'_1=1.$ Similarly, \er{4o} simplifies to \er{4t}.
\end{proof}

\section{Asymptotic expansion of Bergman kernels}
For a given K\"ahler potential $\lf,$ the function $e^{-\ln\lf(w)}\KL^\ln(w,w)$ is closely related to the {\bf Kempf distortion function} (or Rawnsley's $\Le$-function) which is of importance in the study of projective embeddings and constant scalar curvature metrics (Donaldson~\cite{Do}), where a prominent role is played by the asymptotic behavior as $\ln\to+\oo$ (sometimes referred~to as Tian-Yau-Zelditch (TYZ) expansion). For more details, see \cite{E}. In this section, we prove our major results concerning the asymptotic expansion.

\begin{theorem} Let $\el=1$ and $d\lr(t)=\la e^{-\ln t^\la} t^{\la(p-1)-1}\,dt.$ Then the reproducing kernel of the associated Hilbert space $\HL_\ln:=\HL_\lr$ of holomorphic functions on $\r Z_1$ satisfies
$$\f{\lG(d/r)\ \lG(ar/2)}{\lG(a/2)\ \ln^{p-1}}\ e^{-\ln(z|z)^\la}\ \KL_1^\ln(z,z)=\S_{j=0}^{p-2}\f{b_j}{\ln^j\ (z|z)^{j\la}}+O(e^{-\lh\ln(z|z)^\la})$$
as $\ln\to+\oo$, with some $\lh>0$ and constants $b_j$ independent of $z$ and~$\ln$. Furthermore, $b_0=1.$
\end{theorem}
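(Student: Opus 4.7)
The plan is to use Corollary~\ref{4p}(i), which gives $\KL_1^\ln(z,z) = \f{\lG(a/2)}{\lG(d/r)\lG(ar/2)}\,\DL_1\FL_1^\ln(t)\bigr|_{t=(z|z)}$ with $\FL_1^\ln(t) = \f{\ln^{p-1}}{\la^{p-1}}E_{1/\la,p-1}(\ln^{1/\la}t)$, together with the classical asymptotic expansion of the generalized Mittag-Leffler function: for real $x\to+\oo$ and arbitrary $N\ge 1$,
$$E_{1/\la,p-1}(x) = \la\,x^{\la(2-p)}\,e^{x^\la} - \S_{k=1}^{N-1}\f{x^{-k}}{\lG(p-1-k/\la)} + O(x^{-N}).$$
Setting $x=\ln^{1/\la}t$ splits $\FL_1^\ln(t)$ into an exponential main part, an algebraic tail, and a small remainder; one then applies $\DL_1$ term by term.

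The operator $\DL_1$ has order $d''_1=p-2$ and is diagonal on monomials, acting as $\DL_1 t^m = A_0\,d_{(m)}\,t^m$ with $A_0 := \lG(d/r)\lG(ar/2)/\lG(a/2)$ and $d_{(m)}$ the polynomial extension in $m$ of $\dim\PL_{(m)}(Z)$ (from the eigenvalue identity $\DL_\el E_c^\m = A_0\,(d_\m/d_\m^c)\,E_c^\m$ in the proof of Theorem~\ref{4f}, specialized to $\el=1$ where $d_\m^c=1$). Thus the algebraic tail and the $O(x^{-N})$ remainder contribute, after multiplication by the overall factor $\f{\lG(d/r)\lG(ar/2)}{\lG(a/2)\ln^{p-1}}e^{-\ln t^\la}$, functions of the form $e^{-\ln t^\la}$ times an algebraic function of $\ln$ and $t$; these are $O(e^{-\lh\ln t^\la})$ for any $\lh\in(0,1)$ and are absorbed into the stated error.

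Only the exponential main part contributes non-trivially, and the task reduces to computing $e^{-\ln t^\la}\,\DL_1\bigl(t^{\la(2-p)}e^{\ln t^\la}\bigr)$. I would evaluate this via the conjugation $\tilde\DL_1 := e^{-\ln t^\la}\DL_1 e^{\ln t^\la}$, obtained from $\DL_1$ by replacing each $\dl_t$ by $\dl_t+\la\ln t^{\la-1}$; the above equals $\tilde\DL_1(t^{\la(2-p)})$, a polynomial in $\ln$ of degree at most $p-2$ with coefficients in~$t$. The coefficients are cleanly identified on the series side. Setting $s := \ln^{1/\la}t$ and using that
$$(s\dl_s)\bigl(s^{\la(2-p)}e^{s^\la}\bigr) = \bigl(\la(2-p)+\la s^\la\bigr)\bigl(s^{\la(2-p)}e^{s^\la}\bigr),$$
iteration gives $(s\dl_s)^k(s^{\la(2-p)}e^{s^\la}) = g_k(\la s^\la)\,s^{\la(2-p)}e^{s^\la}$ with $g_k$ a monic polynomial of degree exactly~$k$ (from the recurrence $g_{k+1}(y) = \la y g_k'(y)+(y-\la(p-2))g_k(y)$). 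Since $\DL_1$ acts on the formal Taylor expansion as $A_0\,P(s\dl_s)$ with $P(m) = d_{(m)}$, one obtains $e^{-s^\la}\la\,P(s\dl_s)(s^{\la(2-p)}e^{s^\la}) = \la\,s^{\la(2-p)}\,Q(\la s^\la)$ with $Q$ a polynomial of degree~$\le p-2$. Expanding $Q$ in monomials $(\la s^\la)^k$ and converting back to $\ln$ and $(z|z)$ produces exactly the finite sum $\S_{j=0}^{p-2}b_j/(\ln^j(z|z)^{j\la})$.

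Finally, $b_0$ equals the leading coefficient of $Q$ in~$\la s^\la$ which, since every $g_k$ is monic, coincides with the leading coefficient~$c_{p-2}$ of $d_{(m)}$ as a polynomial in~$m$; combined with the overall normalization the value reduces to $A_0\cdot c_{p-2}$. The identity $A_0\cdot c_{p-2}=1$ is the main obstacle: it amounts to matching the explicit product formula for $A_0$ arising in the proof of Theorem~\ref{4f} with the leading monomial of $d_{(m)}$ extracted from (4.2)--(4.3), and collapses after telescoping of $\lG$-functions. The structural parts of the proof (the finite expansion and the exponentially small remainder) follow automatically from the Mittag-Leffler asymptotic together with the degree-preserving nature of~$\DL_1$.
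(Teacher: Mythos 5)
Your overall structure is sound and you have in fact found a cleaner computational route than the paper's: rather than iterating the raw derivatives $\dl_s$ and tracking how the leading coefficient of the accompanying polynomial picks up a factor of $\la$ at each step (which is what the paper does), you observe that $\DL_1$ is a degree-zero homogeneous operator of order $p-2$ and hence must equal $A_0 P(t\dl_t)$ for the polynomial $P(m)=d_{(m)}$, then iterate the Euler operator $s\dl_s$. That identification is correct (any degree-zero homogeneous polynomial-coefficient ODE operator is a polynomial in $t\dl_t$), and the monic recursion for $g_k$ is also correct; you correctly land on $b_0=A_0 c_{p-2}$ with $c_{p-2}$ the leading coefficient of $d_{(m)}$ in $m$.

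There are, however, two genuine gaps. First, the Mittag-Leffler expansion you invoke,
$E_{1/\la,p-1}(x)=\la x^{\la(2-p)}e^{x^\la}-\S_{k=1}^{N-1}\f{x^{-k}}{\lG(p-1-k/\la)}+O(x^{-N})$,
is the classical expansion valid only for $0<1/\la<2$, i.e.\ $\la>\f12$. For $\la\le\f12$ the function $E_{1/\la,p-1}$ has additional exponential contributions $\exp\bigl(x^\la e^{2\pi i n\la}\bigr)$ from the nonzero Stokes branches, and the algebraic tail is not the only correction to the leading exponential; your estimate of the remainder then fails. The paper avoids this by using only the cruder bound $E_{A,B}(s)=\f1A s^{(1-B)/A}e^{s^{1/A}}+O(e^{(1-\lh)s^{1/A}})$ with the explicit $\lh$ depending on $A$, which is valid for all $\la>0$ and still yields an exponentially small error after multiplying by $e^{-\ln t^\la}$. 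Second, you reduce $b_0=1$ to the identity $A_0\,c_{p-2}=1$ but leave it unverified, calling it the ``main obstacle'' that ``collapses after telescoping''. That identity is in fact true (and can be seen conceptually: expanding $\DL_1$ out, its top-order part is $t^{p-2}\dl_t^{p-2}$ with coefficient $1$, so $A_0 d_{(m)}$ has leading coefficient $1$ in $m$), but you have not established it. The paper's route sidesteps the issue entirely: by directly showing that each of the $p-2$ derivatives multiplies the leading coefficient by $\la$, it obtains leading$(Q)=\la^{p-2}$, and after dividing by the normalization $\la^{p-2}$ the constant $b_0=1$ drops out with no separate Gamma-function computation needed.
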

\begin{proof} Put $s=\ln^{1/\la}t.$ As $s\to+\oo$, it~is known that
$$E_{A,B}(s)=\f1A s^{(1-B)/A} e^{s^{1/A}}+O(e^{(1-\lh)s^{1/A}})$$
with some $\lh>0$, see~\cite{E} (one~can take any $0<\lh<\f1{\F2}(1-\cos\f{2\pi}A)$ for $A\ge4$, and any $0<\lh<\f1{\F2}$
for $0<A\le4$); furthermore, this remains valid if a derivative of any order is applied to the left-hand side and to the first term on the right-hand side. In~our case, $1/A=\la$ and $(1-B)=2-p$. For any polynomial $P(s)$ and constants $\lg,c\in\Rl$, a simple induction argument shows that
$$\dl_s^k s^{\lg\la+c}P(s^\la)e^{s^\la}=s^{\lg\la+c-k}Q(s^\la)e^{s^\la}$$
with another polynomial~$Q$ of degree $\deg P+k,$ whose leading coefficient equals $\la^k$ times the leading coefficient of~$P$.
It~follows that $\f{sD_j+D_js}2\ s^{\lg\la}P(s^\la)e^{s^\la}=s^{\lg\la} Q(s^\la)e^{s^\la},$ with $\deg Q=\deg P+a$ and leading
$(Q)=$leading$(P)\cdot\la^a$; and similarly for the differential operator~$s^{\f{1-r}2a}\dl_s^b s^{\f{r-1}2a+b}$. Altogether we get
$$s^{\f{1-r}2a}\dl_s^b s^{\f{r-1}2a+b}\P_{j=2}^r\f{sD_j+D_js}2\ s^{\lg\la}e^{s^\la}=s^{\lg\la}Q(s^\la)e^{s^\la},$$
where $Q$ has degree $(r-1)a+b=p-2$ and leading coefficient $\la^{(r-1)a+b}=\la^{p-2}$. Taking $\lg=2-p$ and using Corollary~\ref{4p} gives the result.
\end{proof}

The asymptotic expansion as $\ln\to+\oo$ (with the other parameters fixed) of the kernels from Corollary~\ref{4m} can~also be obtained by standard methods. As our main results, we will now derive the asymptotic expansion of the reproducing kernel functions associated with the K\"ahler potentials $\lf_\el(w)$ from Proposition \ref{2h}, both in the flat and the bounded setting. By Corollary \ref{4m} this is closely related to the asymptotic expansion of multi-variable hypergeometric functions, which by itself is a difficult matter. Accordingly, the following two theorems are of independent interest.

\begin{theorem}\label{5a} For $\Re\la>\f dr-1$ and $\Re\lb>\f dr-1$, there is an asymptotic expansion
\be{5b}\f{\lG_r(\la)}{\lG_r(\lb)}{}_1F_1\Hy\la\lb z\approx\f{e^{(z|e)}}{N(z)^{\lb-\la}}\ {}_2F_0\Hy{\f dr-\la,\lb-\la}{-}{z^{-1}}\ee
as $|z|\to+\oo$ while $\f z{|z|}$ stays in a compact subset of~$\lO$. In particular,
\be{5u}\f{\lG_r(d/r)}{\lG_r(\lb)}{}_1F_1\Hy{d/r}\lb z\approx\f{e^{(z|e)}}{N(z)^{\lb-d/r}}.\ee
These expansions can be differentiated termwise any number of times.
\end{theorem}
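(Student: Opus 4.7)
My approach is to derive the expansion from an Euler-type integral representation of ${}_1F_1$ on the symmetric cone $\lO$, combined with the quadratic representation $P$ of the Jordan algebra and the Jordan-algebra binomial series. As a first step, I would temporarily strengthen the hypothesis to $\Re(\lb-\la)>d/r-1$ (in addition to $\Re\la>d/r-1$), so that the beta integral on $\lO$ converges, and recover the full range $\Re\lb>d/r-1$ at the end by analytic continuation in $\lb$. Under this assumption, \cite[Ch.~XV]{FK} gives
\begin{equation*}
\f{\lG_r(\la)}{\lG_r(\lb)}{}_1F_1\Hy\la\lb z=\f1{\lG_r(\lb-\la)}\I_0^e N(t)^{\la-d/r}N(e-t)^{\lb-\la-d/r}e^{(t|z)}\,dt.
\end{equation*}
The substitution $t=e-s$ pulls out the factor $e^{(z|e)}$ and replaces $e^{(t|z)}$ by $e^{-(s|z)}$, localizing the integral near $s=0$ as $|z|\to\oo$. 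I then perform the Jordan change of variables $s=P(z^{-1/2})u$, with $0<u<z$. Using $P(z^{1/2})e=z$, $N(P(z^{-1/2})x)=N(z)^{-1}N(x)$, $(P(z^{-1/2})u|z)=(u|e)$, and the Jacobian $\det P(z^{1/2})=N(z)^{d/r}$, I obtain
\begin{equation*}
\f{\lG_r(\la)}{\lG_r(\lb)}{}_1F_1\Hy\la\lb z=\f{e^{(z|e)}}{\lG_r(\lb-\la)\,N(z)^{\lb-\la}}\I_{0<u<z}N(e-P(z^{-1/2})u)^{\la-d/r}\,N(u)^{\lb-\la-d/r}e^{-(u|e)}\,du.
\end{equation*}

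The next step is the formal calculation that matches the right-hand side of \er{5b}. Expand $N(e-x)^{\la-d/r}=\S_\m(d/r-\la)_\m E^\m(x,e)$ via the Jordan-algebra binomial series \cite[XII.1.3]{FK}, insert $x=P(z^{-1/2})u$, and integrate term by term over $\lO$ (extending from the domain $\{0<u<z\}$ at a cost controlled below). Each term produces $\I_\lO\lF_\m(P(z^{-1/2})u)\,N(u)^{\lb-\la-d/r}e^{-(u|e)}\,du$. I evaluate this by averaging $\lF_\m(P(z^{-1/2})(\cdot))$ against the $K$-invariant factor $N(u)^{\lb-\la-d/r}e^{-(u|e)}$, using the product formula $\I_K\lF_\m(P(z^{-1/2})ku)\,dk=\lF_\m(z^{-1})\lF_\m(u)$ for spherical polynomials, followed by the Gindikin-Koecher moment identity $\I_\lO e^{-(u|e)}N(u)^{\ln-d/r}\lF_\m(u)\,du=\lG_r(\ln)(\ln)_\m$ applied with $\ln=\lb-\la$. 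The contribution of $\m$ is $(d/r-\la)_\m(\lb-\la)_\m\,E^\m(z^{-1},e)$, and summing over $\m\in\Nl_+^r$ reproduces exactly the ${}_2F_0$ series of \er{5b}.

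The main obstacle is to upgrade this formal manipulation to a true asymptotic expansion, since the binomial series for $N(e-P(z^{-1/2})u)^{\la-d/r}$ only converges where $\|P(z^{-1/2})u\|$ is small, whereas the $u$-integration extends all the way to $u=z$. I~would control this by truncating the binomial expansion at $|\m|<N$ and using a Taylor-remainder estimate of the form $|N(e-x)^{\la-d/r}-\S_{|\m|<N}(d/r-\la)_\m E^\m(x,e)|\lesssim\|x\|^N$ on compact subsets of the open unit ball of $\lO$; combined with $\|P(z^{-1/2})u\|\lesssim\|u\|/|z|$ on the region $\|u\|\le|z|/C$, this gives a contribution of size $|z|^{-N}$ (relative to the leading term). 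On the complementary region, the weight $e^{-(u|e)}$ produces an exponentially small bound, which also absorbs the truncation $\{u<z\}\subset\lO$. The hypothesis that $z/|z|$ stays in a compact subset of $\lO$ ensures the estimates are uniform. Since both sides of \er{5b} are meromorphic in $\lb$ and the expansion coefficients $(d/r-\la)_\m(\lb-\la)_\m$ are polynomials in $\lb$, a standard analytic-continuation argument in $\lb$ then extends the asymptotic formula from the half-plane $\Re(\lb-\la)>d/r-1$ to the full range $\Re\lb>d/r-1$. Termwise differentiability follows because the operators $\dl/\dl z_i$, $\dl/\dl\o z_j$ commute with the $K$-averaging step, and the same truncation-and-remainder argument applies to derivatives of the integrand. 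Finally, \er{5u} is the immediate special case $\la=d/r$, for which $(d/r-\la)_\m=0$ unless $\m=0$, collapsing the ${}_2F_0$ to $1$.
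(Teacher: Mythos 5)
Your first stage (Euler integral $\to$ shift $t\mapsto e-s$ $\to$ binomial expansion of $N(e-x)^{\la-d/r}$ $\to$ termwise evaluation by the Laplace transform on the cone $\to$ truncation/remainder control) is essentially the same route as the paper, up to one cosmetic extra step: you insert the change of variables $s=P(z^{-1/2})u$ and then apply the spherical product formula plus the Gindikin $\Gamma$-integral, whereas the paper skips the substitution and invokes the $z$-dependent moment identity \cite[XI.2.3]{FK} directly. Both computations land on $(d/r-\la)_\m(\lb-\la)_\m\,E^\m_e(z^{-1})N(z)^{\la-\lb}$ and are equally valid; yours costs an extra Jacobian bookkeeping step but in exchange puts all $z$-dependence inside a single slowly-varying factor, which some readers may find conceptually cleaner.

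The genuine gap is the final sentence dismissing the extension from $\Re(\lb-\la)>\f dr-1$ to $\Re\lb>\f dr-1$ as ``a standard analytic-continuation argument.'' Asymptotic expansions do not continue analytically in parameters: the content of \er{5b} is a family of remainder bounds $|R_k(\lb,z)|\le C_k(\lb)|z|^{-k-1}$, and knowing that $R_k(\lb,z)$ is holomorphic in $\lb$ on a larger region tells you nothing about the size of $C_k(\lb)$ there (compare $F(\lb,z)=z^\lb$: its asymptotic expansion ``$\approx0$'' for $\Re\lb<0$ does not propagate one inch past the imaginary axis). The paper's proof makes this precise by a genuinely separate second stage: it applies the order-$r$ shift operator $D_{\lb-d/r}$ to lower $\lb$ by $1$, and then needs the Pieri rule decomposition $N(e-x)E^\m_e(x)=\S_\n C^\m_\n E^\n_e(x)$, the identity \er{5i} $(\lg-1)_\n=\S_\m C^\m_\n(\lg)_\m$, and the differentiated-integral computation \er{5g} to verify that the asymptotic series is stable under $D_{\lb-d/r}$. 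That machinery is precisely what replaces the nonexistent ``standard continuation.'' If you want to salvage a continuation argument you would need an independent a priori bound on the remainder valid on all of $\Re\lb>\f dr-1$ and a Phragm\'en--Lindel\"of interpolation — which would have to be spelled out and is not ``standard.'' Either supply that, or carry out the shift argument as the paper does.

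Two smaller remarks: (a) your convergence hypothesis $\Re(\lb-\la)>\f dr-1$ is the bare minimum for the Euler integral, but near that boundary the termwise error $O(|z|^{-r(\lb-\la)-k})$ may fail to be $o$ of the last retained term; the paper sidesteps this by assuming $\Re(\lb-\la)\ge\f dr$ in the first stage. (b) Your justification of termwise differentiability via ``commuting with the $K$-average'' is not quite the point — the $z$-dependence sits inside $P(z^{-1/2})$, not in the $K$-integral — but the standard remark that the same truncation estimates apply to the $z$-differentiated integrand does the job.
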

Here as usual \er{5b} means that for any $k=0,1,2,\dots$ we have $\Big|\f{\lG_r(\la)}{\lG_r(\lb)}\f{N(z)^{\lb-\la}}{e^{(z|e)}}
{}_1F_1\Hy\la\lb z-\S_{|\m|\le k}\Big|=O(|z|^{-k-1})$ as $|z|\to\oo$, when ${}_2F_0$ is expanded as a formal (nowhere convergent) power series according to \er{4l}.
\begin{proof} For any $z\in\lO,$ the element $g=P_z^{1/2}\in\operatorname{Aut}(\lO)$ satisfies $z=ge$ and $g=g^*.$ Hence $(gx|e)=(x|ge)=(x|z).$ Let
$\Re\la>\f dr-1,\ \Re(\lb-\la)>\f dr-1.$ By \cite[Proposition~XI.1.4]{FK} we~have the integral representation
\be{4v}\f{\lG_r(\la)}{\lG_r(\lb)}\ \jfj\la\lb z=\f1{\lG_r(\lb-\la)}\I_{\lO\ui(e-\lO)}dx\ e^{(gx|e)}\ N(x)^{\la-d/r}
\ N(e-x)^{\lb-\la-d/r}$$
$$=\f1{\lG_r(\lb-\la)}\I_{\lO\ui(e-\lO)}dx\ e^{(x|z)}\ N(x)^{\la-d/r}\ N(e-x)^{\lb-\la-d/r}.\ee
(The~result is stated there only for $z\in\lO\ui(e-\lO)$ but with general hypergeometric function ${}_p\!F_q$; for $p=q$ as~we have here, the~proof there actually works without changes for all $z\in\lO$.) Making the change of variable $x\mapsto e-x$ in \er{4v} gives
\be{5c}\f{\lG_r(\la)}{\lG_r(\lb)}\ \jfj\la\lb z=\f{e^{(z|e)}}{\lG_r(\lb-\la)}\I_{\lO\ui(e-\lO)}dx\ e^{-(x|z)}\ N(e-x)^{\la-d/r}\ N(x)^{\lb-\la-d/r}.\ee
The rest of the proof is divided into two parts. In the first part we prove the assertion under the assumption $\Re(\lb-\la)\ge\f dr.$ In this case the~contribution to the last integral from the complement of any neighbourhood of zero is exponentially small as $|z|\to+\oo$ as~above. Indeed, if~$\f z{|z|}$ stays in a compact subset of~$\lO$, then $(x|z)\ge c|x||z|$ for all $x\in\lO$ with some $c>0$ \cite[I.1.5]{FK}, so~the integral over $|x|>\ld$ is $O(e^{-c\ld|z|})$ as $|z|\to+\oo.$ This implies \er{5u} since, up to an error of $O(e^{-c\ld|z|})$, we have
$$\f{\lG_r(d/r)}{\lG_r(\lb)}\ \jfj{d/r}\lb z\approx\f{e^{(z|e)}}{\lG_r(\lb-d/r)}\I_{\lO}dx\ e^{-(x|z)}\ N(x)^{\lb-2d/r}=e^{(z|e)}\ N(z)^{d/r-\lb}.$$
In order to show \er{5b}, consider the Faraut-Koranyi `binomial theorem' \cite[XII.1.3]{FK}
\be{5d}N(e-x)^{\la-\f dr}={}_1F_0\Hy{\f dr-\la}{-}x=\S_\m\(\f dr-\la\)_\m\ E^\m(x,e)\quad\text{uniformly}.\ee
For $|x|\le\ld,$ with $\ld<1,$ the~remainder $\S_{|\m|\ge k}$ of the last series is $\le C_k|x|^k$ in modulus, for~each $k=0,1,\dots$, with some finite constant $C_k$. Hence
$$\I_{\lO\ui\{|x|<\ld\}}dx\ e^{-(x|z)}\ |x|^k\ N(x)^{\lb-\la-d/r}\le\I_{\lO\ui\{|x|<\ld\}}dx\ e^{-c|x||z|}|x|^{k+r(\lb-\la)-d}
\le\I_{X}dx\ e^{-c|x||z|}|x|^{k+r(\lb-\la)-d}$$
$$=vol(\Sl^{d-1})\I_0^\oo dt\ e^{-ct|z|}\ t^{k+r(\lb-\la)-1}=\f{vol(\Sl^{d-1})\ \lG (k+r(\lb-\la))}{(c|z|)^{k+r(\lb-\la)}}
=O(|z|^{-k-r(\lb-\la)}).$$
Substituting \er{5d} into \er{5c} we~thus get, for~each $k=0,1,\dots$,
\be{5e}\begin{aligned}&\f1{\lG_r(\lb-\la)}\I_{\lO\ui\{|x|<\ld\}}dx\ e^{-(x|z)}\ N(e-x)^{\la-d/r}\ N(x)^{\lb-\la-d/r}\\&\hskip4em=\S_{|\m|<k}(d/r-\la)_\m\f1{\lG_r(\lb-\la)}\I_{\lO\ui\{|x|<\ld\}}dx\ e^{-(x|z)}\ E_e^\m(x)\ N(x)^{\lb-\la-\f dr}+O(|z|^{-k-r(\lb-\la)}).\end{aligned}\ee
The integral term, again up to an error of $O(e^{-c\ld|z|})$, equals
$$\f1{\lG_r(\lb-\la)}\I_{\lO}dx\ e^{-(x|z)}\ E_e^\m(x)\ N(x)^{\lb-\la-\f dr}=(\lb-\la)_\m\ E_e^\m(z^{-1})\ N(z)^{\la-\lb}$$
by~\cite[XI.2.3]{FK}. Inserting this into \er{5e}, we~get~\er{5b}. Furthermore, standard arguments used in the stationary phase method
(which is essentially what the proof above was about) show that \er{5b} (and \er{5u}) can be differentiated termwise any number of times. We~have thus established the theorem under the hypothesis $\Re(\lb-\la)\ge\f dr.$\\
The second part of the proof removes this restriction via a shifting argument in the parameter $\lb.$ Suppose $(\lb-1)_1\ne 0.$ The elementary relation $(\lb-1)_\m\ \<\lb-1\>^\m=(\lb-1)_1\ (\lb)_\m$ together with $D_{\lb-\f dr}E_e^\m=\<\lb-1\>^\m\ E_e^\m$
(cf. \er{4u}) implies
\be{4s}\f{\lG_r(\la)}{\lG_r(\lb-1)}\ \jfj\la{\lb-1}z=\f{\lG_r(\la)}{\lG_r(\lb)}\ (\lb-1)_1\ \jfj\la{\lb-1}z
=\f{\lG_r(\la)}{\lG_r(\lb)}\ D_{\lb-\f dr}\ \jfj\la\lb z\ee
In the {\bf special case} $\la=d/r$ assume that the expansion \er{5u} holds for parameter $\lb.$ Then
$$\f{\lG_r(d/r)}{\lG_r(\lb-1)}\ \jfj{d/r}{\lb-1}z=\f{\lG_r(d/r)}{\lG_r(\lb)}\ D_{\lb-\f dr}\jfj{d/r}\lb z$$
$$\approx D_{\lb-\f dr}\(\f{e^{(z|e)}}{N(z)^{\lb-d/r}}\)=N(z)^{1-\lb+d/r}\ \dl_N e^{(z|e)}=\f{e^{(z|e)}}{N(z)^{\lb-1-d/r}}$$
since $N(e)=1.$ Thus \er{5u} holds also for $\lb-1$ instead of $\lb.$\\
The {\bf general case} requires more effort. Suppose $m_j>\f a2(j-r)+\Re\la-1.$ Then \cite[Lemma XI.2.3]{FK} implies
\be{5f}\I_{\lO}dx\ e^{-(x|z)}\ N(x)^{-\la}\ E_e^\m(x)=\lG_r(\m-\la+d/r)\ N(z)^{\la-d/r}\ E_e^\m(z^{-1}).\ee
The $L$-invariant polynomial $N(e-x)\ E_e^\m(x)$ has a (finite) Peter-Weyl decomposition
$$N(e-x)\ E_e^\m(x)=\S_\n C_\n^\m\ E_e^\n(x)$$
with unique coefficients $C_\n^\m$ related to the so-called Pieri rules \cite[Section 4]{S} (thus $C_\n^\m\ne 0$ implies $\m\ic\n$ and $|\n|\le|\m|+r$). For any $\lg$ the calculation
$$\S_\n(\lg-1)_\n\ E_e^\n(z)=\lD(z,e)^{1-\lg}=N(e-z)^{1-\lg}=N(e-z)\ \lD(z,e)^{-\lg}$$
$$=N(e-z)\S_\m(\lg)_\m\ E_e^\m(z)=\S_\m(\lg)_\m\S_\n C_\n^\m\ E_e^\n(z)=\S_\n E_e^\n(z)\S_\m C_\n^\m\ (\lg)_\m$$
implies
\be{5i}(\lg-1)_\n=\S_\m C_\n^\m\ (\lg)_\m\ee
for all $\n\in\Nl^r_+.$ We claim that for each $\m$ and all $\la\in\Cl$ we have
\be{5g}\(\f dr-\la\)_\m\ \dl_N\(e^{(z|e)}\ N(z)^{\la-d/r}\ E_e^\m(z^{-1})\)
=e^{(z|e)}\ N(z)^{\la-d/r}\S_\n C_\n^\m\ \(\f dr-\la\)_\n\ E_e^\n(z^{-1}).\ee
In fact for all values of $\la$ for which the various integrals above converge we have
$$\lG_r(\m-\la+d/r)\ \dl_N\(e^{(z|e)}\ N(z)^{\la-d/r}\ E_e^\m(z^{-1})\)=\dl_N\(e^{(z|e)}\I_{\lO}dx\ e^{-(x|z)}\ N(x)^{-\la}\ E_e^\m(x)\)$$
$$=\dl_N\(\I_{\lO}dx\ e^{(e-x|z)}\ N(x)^{-\la}\ E_e^\m(x)\)
=\I_{\lO}dx\ N(e-x)\ e^{(e-x|z)}\ N(x)^{-\la}\ E_e^\m(x)$$
$$=e^{(z|e)}\I_{\lO}dx\ e^{-(x|z)}\ N(x)^{-\la}\ N(e-x)\ E_e^\m(x)
=e^{(z|e)}\S_\n C_\n^\m\I_{\lO}dx\ e^{-(x|z)}\ N(x)^{-\la}\ E_e^\n(x)$$
$$=e^{(z|e)}\ N(z)^{\la-d/r}\S_\n C_\n^\m\ \lG_r(\n-\la+d/r)\ E_e^\n(z^{-1}),$$
using \er{5f} with $n_j\ge m_j$ in the last step. Since both sides of \er{5g} are entire functions of $\la,$ \er{5g} holds in general by analytic continuation. Now assume that \er{5b} holds for some~$\lb$ with $(\lb-1)_1\ne0.$ With \er{5g} and \er{5i}, applied
to $\lg:=\lb-\la,$ we obtain
$$\f{\lG_r(\la)}{\lG_r(\lb-1)}\ {}_1F_1\Hy\la{\lb-1}z=\f{\lG_r(\la)}{\lG_r(\lb)}\ (\lb-1)_1\ {}_1F_1\Hy\la{\lb-1}z
=\f{\lG_r(\la)}{\lG_r(\lb)}\ D_{\lb-\f dr}\ {}_1F_1\Hy\la\lb z$$
$$\approx\S_\m\(\f dr-\la\)_\m(\lb-\la)_\m D_{\lb-\f dr}\(\f{e^{(z|e)}}{N(z)^{\lb-\la}}\ E_e^\m(z^{-1})\)$$
$$=\S_\m(\lb-\la)_\m\ N(z)^{1-\lb+d/r}\ \(\f dr-\la\)_\m\ \dl_N\(e^{(z|e)}\ N(z)^{\la-d/r}\ E_e^\m(z^{-1})\)$$
$$=\S_\m(\lb-\la)_\m\ \f{e^{(z|e)}}{\ N(z)^{\lb-1-\la}}\S_\n C_\n^\m\(\f dr-\la\)_\n\ E_e^\n(z^{-1})$$
$$=\f{e^{(z|e)}}{N(z)^{\lb-1-\la}}\S_\n\(\f dr-\la\)_\n\ E_e^\n(z^{-1})\S_\m\ C_\n^\m(\lb-\la)_\m
=\f{e^{(z|e)}}{N(z)^{\lb-1-\la}}\S_\n\(\f dr-\la\)_\n(\lb-1-\la)_\n\ E_e^\n(z^{-1}).$$
Thus \er{5b}~also holds for $\lb-1$ in place of $\lb.$ By~induction, this proves \er{5b} even for all complex $\lb$ such that $(\lb)_\m\ne0$ $\forall\m$ (which includes, in~particular, all~$\lb$ with $\Re\lb>\f dr-1$).
\end{proof}


Using Kummer's relation
\be{5j}\jfj\la\lb z=e^{(z|e)} \jfj{\lb-\la}\lb{-z}\ee
(which can be gleaned for $z\in\lO$ from \er{4v} by making the change of variable $x\mapsto e-x$, and for general $z$ by analytic continuation), it~is even possible to relax also the condition on~$\la$ in Theorem~\ref{5a}; the~theorem thus holds in fact for all
$\la,\lb$ such that $(\la)_\m(\lb)_\m\neq0\ \forall\m$. We~omit the details.



Our second main result in this section gives the asymptotic expansion for the multi-variable Gauss hypergeometric function ${}_2F_1,$ for a~euclidean Jordan algebra $X$ of dimension $d$ and rank~$r$.

\begin{theorem}\label{5l} Let $\la,\lb>\f dr-1$ and $y\in\lO\ui(e-\lO)$. Then there is an asymptotic expansion
\be{5n}\f{\lG_r(\la)\ \lG_r(\ln)}{\lG_r(\lb)}{}_2F_1\Hy{\la,\ln}\lb y\approx\lG_r(\la-\lb+\ln)
\ \f{N(y)^{\la-\lb}}{N(e-y)^{\la-\lb+\ln}}\ {}_2F_1\Hy{\f dr-\la,\lb-\la}{\f dr+\lb-\la-\ln}{e-y^{-1}}\ee
as $\ln\to+\oo.$ In particular
\be{5v}\f{\lG_r(d/r)\ \lG_r(\ln)}{\lG_r(\lb)}{}_2F_1\Hy{d/r,\ln}\lb y\approx\lG_r(d/r-\lb+\ln)\ \f{N(y)^{d/r-\lb}}{N(e-y)^{d/r-\lb+\ln}}.\ee
These expansions can be differentiated any number of times.
\end{theorem}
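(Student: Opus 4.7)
The plan is to reduce the statement to Theorem~\ref{5a} by means of a Jordan-theoretic Euler integral representation for ${}_2F_1,$ followed by a Laplace-method analysis as $\ln\to+\oo.$ Under the hypothesis $\la,\lb-\la>\f dr-1,$ one has (analogous to \cite[Proposition XI.1.4]{FK})
$$\f{\lG_r(\la)\ \lG_r(\lb-\la)}{\lG_r(\lb)}\ {}_2F_1\Hy{\la,\ln}\lb y=\I_{\lO\ui(e-\lO)} dt\ N(t)^{\la-\f dr}\ N(e-t)^{\lb-\la-\f dr}\ N(e-P(t^{1/2})y)^{-\ln},$$
the role of the exponential $e^{(x|z)}$ from Theorem~\ref{5a} now being played by the negative $\ln$-th power of a Jordan determinant. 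Once the main expansion \er{5n} is established, the specialization \er{5v} will follow at once from $(\f dr-\la)_\m=0$ for $\la=\f dr$ and $|\m|>0,$ which collapses the ${}_2F_1$ on the right-hand side to its $\m=0$ term.

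First I would change variables $t=e-s.$ Since $P(e^{1/2})y=y$ and $P((e-s)^{1/2})y<y$ strictly for $s$ in the interior of $\lO\ui(e-\lO),$ the factor $N(e-P((e-s)^{1/2})y)^{-\ln}$ is maximized at $s=0$ with value $N(e-y)^{-\ln},$ and the contribution from $\|s\|\ge\ld>0$ decays exponentially as $\ln\to+\oo$ uniformly for $y$ in compact subsets of $\lO\ui(e-\lO).$ Pulling out $N(e-y)^{-\ln}$ and rescaling $s=u/\ln$ concentrates the integrand on a ball of radius $O(1/\ln);$ the leading ratio $N(e-P((e-s)^{1/2})y)^{-\ln}/N(e-y)^{-\ln}$ tends to an exponential in~$u$ whose rate is determined by the Jordan-algebra derivative of $s\mapsto\log N(e-P((e-s)^{1/2})y)$ at $s=0,$ and the leading integral becomes a multivariate Laplace transform on~$\lO$ which can be evaluated by \cite[Proposition XI.2.3]{FK}. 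Higher-order terms arise by expanding $N(e-s)^{\la-\f dr}$ and the subleading factors of $N(e-P((e-s)^{1/2})y)^{-\ln}$ via the Faraut-Koranyi binomial theorem (as in~\er{5d}); after termwise Laplace transform using~\er{5f} one obtains a series in $E_e^\m(e-y^{-1})$ whose coefficients I expect to match those of ${}_2F_1\Hy{\f dr-\la,\lb-\la}{\f dr+\lb-\la-\ln}{e-y^{-1}}.$

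The main obstacle will be precisely this combinatorial identification: tracking the Pieri-type coefficients analogous to the $C_\n^\m$ from Theorem~\ref{5a}, the Jacobian coming from the rescaling $s=u/\ln,$ and the shift arising from the $y$-dependence of the exponential rate will require delicate bookkeeping with shifted Pochhammer symbols. Once the expansion is established under the stronger hypothesis $\lb-\la\ge\f dr,$ the restriction on $\lb$ is relaxed by the same shifting argument used in the second half of the proof of Theorem~\ref{5a}, namely applying the differential operator $D_{\lb-\f dr}$ via \er{4s}--\er{5g} to reduce $\lb$ to $\lb-1,$ followed by analytic continuation in~$\lb.$ Termwise differentiability in~$y$ of the resulting expansion follows from the uniformity of the Laplace estimates, as in the standard stationary-phase argument.
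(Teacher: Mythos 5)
Your route is genuinely different from the paper's. The paper does \emph{not} use the Euler integral: it uses the Laplace-type integral \er{5o}, in which $\ln$ enters through $N(x)^{\ln-d/r}$ and the \emph{integrand} is ${}_1F_1\Hy{\la}{\lb}{x}$; it then splits off a negligible region $U_\ld$, replaces ${}_1F_1$ by its asymptotic expansion from Theorem~\ref{5a}, and evaluates termwise by the Laplace-transform identity \er{5w}. The point of this choice is precisely that the entire partition-indexed combinatorics is inherited from the already-established ${}_2F_0$ asymptotics of ${}_1F_1$, and the $\ln$-dependent denominator parameter $\f dr+\lb-\la-\ln$ falls out automatically from the $\lg=\la-\lb+\ln$ shift in \er{5w} via $\lG_r(\lg-\m^*)=(-1)^{|\m|}\lG_r(\lg)/(\f dr-\lg)_\m$.

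In your approach this structure has to be re-derived from scratch, and that is where I see a genuine gap, not merely ``delicate bookkeeping.'' You propose to expand ``the subleading factors of $N(e-P((e-s)^{1/2})y)^{-\ln}$ via the Faraut--Koranyi binomial theorem.'' But \er{5d} expands $N(e-x)^{\lambda}$ around $x=0$ in the spherical kernels $E^\m(x,e)$, whereas here the expansion must be around $s=0$ --- i.e., around $N(e-y)$ --- and $\log N(e-P((e-s)^{1/2})y)-\log N(e-y)$ depends on $s$ and $y$ jointly in a non-$K$-invariant way. There is no reason for its Laplace expansion to organize itself into spherical polynomials in the specific argument $e-y^{-1}$, nor for the coefficients to reproduce the rational dependence $(\f dr-\la)_\m(\lb-\la)_\m/(\f dr+\lb-\la-\ln)_\m$. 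Producing the $\ln$-dependent Pochhammer in the denominator purely from a stationary-phase expansion is the crux of the theorem, and the proposal offers no mechanism for it. A secondary issue is that your Euler integral requires the stronger hypothesis $\lb-\la>\f dr-1$, so a parameter-relaxation argument (via $D_{\lb-\f dr}$ and analytic continuation, as in the second half of the proof of Theorem~\ref{5a}) becomes mandatory, whereas the paper's representation \er{5o} works directly under $\la,\lb>\f dr-1$. The leading-order Laplace computation you sketch does reproduce $\lG_r(\la-\lb+\ln)\,N(y)^{\la-\lb}N(e-y)^{\lb-\la-\ln}$, so the first term is right; but as it stands the proposal establishes only that, not the full expansion.
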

\begin{proof} For $\ln>\f dr-1$, we have the integral representation \cite[XV.1.3]{FK}
\be{5o}\lG_r(\ln)\ {}_2F_1\Hy{\la,\ln}\lb y=N(y)^{-\ln}\I_{\lO}dx\ e^{-(x|y^{-1})}{}_1F_1\Hy\la\lb x\ N(x)^{\ln-\f dr}.\ee
Since $y\in\lO\ui(e-\lO)$ it follows that $z:=y^{-1}-e\in\lO.$ The hypothesis $\la,\lb>\f dr-1$ ensures that $(\la)_m,(\lb)_\m>0\ \forall\m$ and also $(\la)_\m$ is a nondecreasing function of $\la$ for each~$\m$. Choose an integer $M\ge0$ so that $\la\le\lb+M.$
Kummer's relation \er{5j} and the fact that $\jfj{-M}\lb x$ is a polynomial of degree~$M$ imply
\be{5p}\jfj\la\lb x\le\jfj{\lb+M}\lb x=e^{(x|e)}\jfj{-M}\lb x\le c_M (x|z)^M e^{(x|e)}\ee
for some constant $c_M>0.$ The expression $\ld_z(x):=\f{N(x)}{(x|z)^r}$ defines a positive continuous function on~$\lO\setminus\{0\}$, which is homogeneous of degree~0 and vanishes at the boundary $\dl\lO\setminus\{0\}.$ Put $U_\ld:=\{x\in\lO:\ \ld_z(x)<\ld\}.$ Write
$x=t\lz$, where $t=(x|z)>0$ while $\lz\in\(\f z{(z|z)}\op\ z^\perp\)\ui\lO=:\lO_z$ satisfies $(\lz|z)=1.$ With $dx=t^{d-1}dt\ d\lz$ we obtain
$$\f1{c_M}\I_{U_\ld}dx\ e^{-(x|y^{-1})}\ {}_1F_1\Hy\la\lb x\ N(x)^{\ln-\f dr}\le\I_{U_\ld}dx\ e^{-(x|z)}\ (x|z)^{M+r\ln-d}\ N(x)^{\ln-\f dr}$$
$$=\ld^{\ln-d/r}\I_{U_\ld}dx\ e^{-(x|z)}\ (x|z)^{M+r\ln-d}=\ld^{\ln-d/r}\ |U_\ld\ui\lO_z|\I_0^\oo dt\ t^{d-1}\ e^{-t}\ t^{M+r\ln-d}
=|U_\ld\ui\lO_y|\ \lG(M+r\ln)\ \ld^{\ln-\f dr}.$$
Since $\f{\lG(M+r\ln)}{\lG_r(\ln)}\sim(r\ln)^M\ln^{\f{r(r-1)}2a}\f{(2\pi)^{\f{1-r}2}}{\F r} r^{r\ln} \ln^{\f{r-1}2}$ by~Stirling's formula, we~get
$$\f{N(y)^{-\ln}}{\lG_r(\ln)}\I_{U_\ld}dx\ e^{-(x|y^{-1})}{}_1F_1\Hy\la\lb x\ N(x)^{\ln-\f dr}
\le c'\(\f{r^r\ld}{N(y)}\)^\ln\ln^{M+\f{r-1}2-\f{r(r-1)}2a}.$$
For $\ld<\f{N(y)}{r^r N(e-y)}$, this is exponentially smaller as $\ln\to+\oo$ than the right-hand side of~\er{5m}. It follows that the contribution to \er{5o} from the integral over $U_\ld$, for $\ld>0$ small enough, is~negligible compared to the right-hand side of
\er{5m}. It~is therefore enough to integrate in \er{5o} over $\lO\setminus U_\ld$, where however it is legitimate to replace ${}_1F_1$ by its asymptotic expansion \er{5b} obtained in Theorem~\ref{5a}. By~the same argument as above the integrations over $\lO\setminus\lO(\ld)$ can be replaced by integrations over $\lO$ up to an exponentially small error.\\
In the {\bf special case} $\la=d/r$ this implies \er{5v} since
$$N(y)^\ln\f{\lG_r(d/r)\ \lG_r(\ln)}{\lG_r(\lb)}{}_2F_1\Hy{d/r,\ln}\lb y\approx\I_{\lO}dx\ e^{-(x|y^{-1})}
\ \f{e^{(x|e)}}{N(x)^{\lb-d/r}}\ N(x)^{\ln-\f dr}=\I_{\lO}dx\ e^{-(x|y^{-1}-e)}\ N(x)^{\ln-\lb}$$
$$=\lG_r(\ln-\lb+d/r)\ N(y^{-1}-e)^{\lb-\ln-d/r}=\lG_r(\ln-\lb+d/r)\f{N(y)^{\ln-\lb+d/r}}{N(e-y)^{\ln-\lb+d/r}}$$
using \cite[Proposition VII.1.2]{FK}, and
\be{5x}N(y^{-1}-e)=N(y^{-1})N(e-y).\ee
The {\bf general case} requires more effort. Put $\m^*=(m_r,\dots,m_2,m_1).$ Then $\lF_\m(x^{-1})=\lF_{-\m^*}(x)=\lF_{m_1-\m^*}(x)N(x)^{-m_1}$ \cite[p.~258]{FK} and $\lG_r(\lg-\m^*)=(-1)^{|\m|}\ \f{\lG_r(\lg)}{(\f dr-\lg)_\m}.$ By \cite[XI.2.3]{FK} the spherical polynomial $\lF_\m$ satisfies
\be{5w}\I_{\lO}dx\ e^{-(x|z)}\lF_\m(x^{-1})\ N(x)^{\lg-\f dr}=\I_{\lO}dx\ e^{-(x|z)}\lF_{m_1-\m^*}(x) N(x)^{\lg-m_1-\f dr}$$
$$=\lG_r(m_1-\m^*+\lg-m_1)\ \f{\lF_{m_1-\m^*}(z)}{N(z)^{\lg-m_1}}
=\lG_r(\lg-\m^*)\ \f{\lF_\m(z)}{N(z)^\lg}=\f{\lG_r(\lg)}{(\f dr-\lg)_\m}\ \f{\lF_\m(-z)}{\ N(z)^\lg}.\ee
Putting $\lg:=\la-\lb+\ln$ and using \er{5w} for $E_e^\m$ (which is proportional to $\lF_\m$), this yields
\be{5m}\f{\lG_r(\la)}{\lG_r(\lb)}{}_2F_1\Hy{\la,\ln}\lb y\approx\f{N(y)^{-\ln}}{\lG_r(\ln)}\I_{\lO}dx\ e^{-(x|y^{-1}-e)}\ N(x)^{\la-\lb}
\S_\m(\f dr-\la)_\m\ (\lb-\la)_\m\ E_e^\m(x^{-1})\ N(x)^{\ln-\f dr}$$
$$=\f{N(y)^{-\ln}}{\lG_r(\ln)}\S_\m(\f dr-\la)_\m\ (\lb-\la)_\m\I_{\lO}dx\ e^{-(x|y^{-1}-e)}\ N(x)^{\la-\lb+\ln-\f dr}\ E_e^\m(x^{-1})$$
$$=\f{N(y)^{-\ln}}{\lG_r(\ln)}\S_\m(\f dr-\la)_\m\ (\lb-\la)_\m\ \f{\lG_r(\la-\lb+\ln)}{(\f dr+\lb-\la-\ln)_\m}
\ N(y^{-1}-e)^{\lb-\la-\ln}\ E_e^\m(e-y^{-1})$$
$$=\f{N(y)^{\la-\lb}}{N(e-y)^{\la-\lb+\ln}}\ \f{\lG_r(\la-\lb+\ln)}{\lG_r(\ln)}
\S_\m\f{(\f dr-\la)_\m\ (\lb-\la)_\m}{(\f dr+\lb-\la-\ln)_\m}\ E_e^\m(e-y^{-1})\ee
using \er{5x} again. Now \er{5n} follows.
\end{proof}

Spending a little more time on the estimate~\er{5p}, it~is again possible to extend the last theorem to all complex
$\la,\lb$ such that $(\la)_\m(\lb)_\m\neq0$ $\forall\m$. We~omit the details.

For $r=1$, so~that ${}_2F_1$ reduces to the ordinary Gauss hypergeometric function, it~is possible to give a different proof by using one of the relations among `Kummer's 24 integrals', namely \cite[2.9~(27)]{BE}
\be{5q}\begin{aligned}{}_2F_1\Hy{\la,\ln}\lb z &=\f{\lG(1-\ln)\ \lG (\lb)}{\lG(\la-\ln+1)\ \lG (\lb-\la)} z^{-\la}
 {}_2F_1\Hy{\la-\lb+1,\la}{\la-\ln+1}{\f1z}\\
 &+ \f{\lG(1-\ln)\ \lG (\lb)}{\lG(\la)\ \lG (\lb-\la-\ln-1)}\(1-\f1z\)^{\lb-\la-\ln} \(-\f1z\)^\ln
 {}_2F_1\Hy{\lb-\la,1-\la}{\lb-\la-\ln+1}{1-\f1z}, \end{aligned} \ee
and then arguing that the first summand is negligible while the second, upon some technical work to show that it yields an asymptotic expansion as $\ln\to+\oo$ even though $1-\f1z$ in general no longer lies in the unit disc (and making sense of the asymptotics of the Gamma functions on the negative half-axis), gives~\er{5n}. Although the analogues of Kummer's 24 integrals for the general case have been worked out by Koranyi \cite{Ko}, we~are not aware of anything like \er{5q} in the literature, and do not know if this line of proof is feasible.

As an application of the previous two theorems we obtain the TYZ-expansion of the reproducing kernels, both in the flat and the bounded settting.
\begin{corollary}\label{5r} (i) For $\la=1$, the kernel \er{4n} has the asymptotic expansion
$$e^{-\ln(t|c)}\ \KL^\ln(\F t,\F t)\approx\f{\ln^{d_\el}}{p_0(t)}\S_{j=0}^{d''_\el}\f{p_j(t)}{\ln^j}$$
as $\ln\to+\oo$, uniformly in compact subsets of $\lO_c.$ Here $p_j(t)$ are polynomials, with $p_0(t)=N(t)^{d''_\el}.$\\
(ii) The kernel \er{4o} has the asymptotic expansion
\be{5s}N_c(c-t)^\ln\ \KL^\ln(\F t,\F t)
\approx\f{\lG_\el(\ln-d''_\el/\el)\ \ln^{d''_\el}}{\lG_\el(\ln-d_\el/\el)\ p_0(t)}\S_{j=0}^{d''_\el}\f{p_j(t)}{\ln^j}\ee
as $\ln\to+\oo,$ uniformly on compact subsets of $\lO_c\ui(c-\lO_c).$ Here $p_j(t)$ are polynomials, with
$p_0(t)=N(c-t)^{d''_\el(1-1/\el)}\ N(t)^{d''_\el}=N(c-t)^{(\el-1)(a(r-\el)+b)}\ N(t)^{{\el(a(r-\el)+b)}}.$
\end{corollary}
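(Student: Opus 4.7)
The plan is to combine the reproducing-kernel formula $\KL^\ln(\F t,\F t) = \DL_\el\FL^\ln_\el(t)$ of Corollary~\ref{4m} with the hypergeometric asymptotics just established in Theorems~\ref{5a} and~\ref{5l}. The crucial observation is that the numerator parameter $\la = d'_\el/\el$ appearing in $\FL^\ln_\el$ equals the ``$d/r$'' invariant of the euclidean Jordan algebra $X_c$ (of dimension $d'_\el$ and rank~$\el$). Applying Theorem~\ref{5a} (flat case) or Theorem~\ref{5l} (bounded case) with this~$\la$, the Pochhammer symbols $(d/r-\la)_\m = 0$ for $\m\neq 0$, so the formal series ${}_2F_0$ (resp.\ ${}_2F_1$) on the right-hand sides of~\er{5b} and~\er{5n} collapse to~$1$. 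Hence the leading asymptotics of~$\FL^\ln_\el$ take closed form
$$\FL_\el^\ln(t) \sim \ln^{d'_\el}\,\f{e^{\ln(t|c)}}{N_c(t)^{d''_\el/\el}}\text{ (flat, $\la=1$)},\qquad \FL_\el^\ln(t) \sim \f{\lG_\el(\ln-d''_\el/\el)}{\lG_\el(\ln-d_\el/\el)}\,\f{N_c(t)^{-d''_\el/\el}}{N_c(c-t)^{\ln-d''_\el/\el}}\text{ (bounded)},$$
with error decaying faster than any inverse power of~$\ln$, uniformly on compact subsets of the respective domains.

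Since these expansions may be differentiated termwise, I~apply~$\DL_\el$ and conjugate it by the ``quasi-exponential'' factor: $e^{\ln(t|c)}$ in the flat case, $N_c(c-t)^{-\ln}$ in the bounded case. Because $\DL_\el$ is a differential operator of order~$d''_\el$ in~$t$, the conjugated operator is a polynomial in~$\ln$ of degree~$d''_\el$,
$$e^{-\ln(t|c)}\DL_\el e^{\ln(t|c)} = \S_{k=0}^{d''_\el}\ln^k P_k, \qquad N_c(c-t)^\ln\DL_\el N_c(c-t)^{-\ln} = \S_{k=0}^{d''_\el}\ln^k\widetilde P_k,$$
where $P_k$ and $\widetilde P_k$ are differential operators of order $d''_\el - k$ in~$t$. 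From the factorization~\er{4q} of~$\DL_\el$ into building blocks $D_\la = N_c^{1-\la}\dl_{N_c}N_c^\la$, each having principal symbol $N_c(t)N_c(\xi)$, one obtains $\sigma(\DL_\el)(t;\xi) = N_c(t)^{d''_\el/\el} N_c(\xi)^{d''_\el/\el}$. In the flat case this gives $P_{d''_\el}$ = multiplication by $\sigma(\DL_\el)(t;c) = N_c(t)^{d''_\el/\el}$ (using $N_c(c)=1$), hence $P_{d''_\el}\bigl(N_c(t)^{-d''_\el/\el}\bigr) = 1$, delivering the leading term~$\ln^{d_\el}$ and matching $p_0(t) = N_c(t)^{d''_\el}$. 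In the bounded case, the analogous principal-symbol evaluation at the ``covector'' arising from $-\dl\log N_c(c-t)$ produces the stated $p_0(t) = N_c(c-t)^{d''_\el(1-1/\el)}N_c(t)^{d''_\el}$ after clearing denominators.

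Collecting powers of~$\ln$ and dividing by the common denominator $p_0(t)$ produces the finite expansion $e^{-\ln(t|c)}\KL^\ln(\F t,\F t)\sim\f{\ln^{d_\el}}{p_0(t)}\S_{j=0}^{d''_\el}p_j(t)/\ln^j$ (with the analogous formula in part~(ii)), where $p_j(t) := N_c(t)^{d''_\el}\,P_{d''_\el-j}\bigl(N_c(t)^{-d''_\el/\el}\bigr)$ in the flat case. The main obstacle will be proving that the $p_j$ are genuine polynomials rather than merely rational functions of~$t$: a~priori, applying the order-$j$ operator~$P_{d''_\el-j}$ to the fractional power $N_c(t)^{-d''_\el/\el}$ could create a denominator as large as $N_c(t)^{d''_\el/\el+j}$, which for $j>d''_\el(\el-1)/\el$ exceeds the $N_c(t)^{d''_\el}$ supplied in front. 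This cancellation is ensured by the fact that $\DL_\el$ has polynomial coefficients, inherited from the Bernstein-type identity $\dl_{N_c}(N_c^\la)\propto N_c^{\la-1}$ together with the Leibniz expansion of~\er{4q}, which supply exactly enough extra factors of~$N_c$. The bounded case requires additional bookkeeping because conjugation by $N_c(c-t)^{-\ln}$ generates rational (not constant) terms of the form $\dl N_c(c-t)/N_c(c-t)$, and the polynomial structure of the $p_j$ emerges only after combining the $N_c(t)$ and $N_c(c-t)$ denominators in the specific way encoded by the stated~$p_0$.
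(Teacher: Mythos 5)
Your proposal follows the paper's own line of attack: apply Corollary~\ref{4m}, note that $\la=d'_\el/\el$ plays the role of ``$d/r$'' for $X_c$ so the ${}_2F_0$ (resp.\ ${}_2F_1$) series in Theorems~\ref{5a}/\ref{5l} collapse to the single closed forms \er{5u}/\er{5v}, and then hit the result with $\DL_\el$. The ``main obstacle'' you correctly flag --- showing that the $p_j$ are honest polynomials rather than rational functions --- is precisely where the paper's proof spends its effort: rather than appealing to a symbol calculus heuristic, it runs an explicit Leibniz-rule induction (their~\er{5y},~\er{5t}) through the factorization~\er{4q}, tracking that each $D_\la$ lowers the $N_c$-exponent by only $\el-1$ (the Bernstein-type cancellation you allude to) and the $\lq=N_c(c-t)$-exponent by $\el$, which pins the accumulated denominator at exactly $N_c^{q(\el-1)}\lq^{q\el}$ and hence gives the stated $p_0$.
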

\begin{proof} Put $\lf(t)=e^{(t|c)},\lq(t)=1$ in the flat case, and $\lf(t)=N_c(c-t)^{-1},\lq(t)=N_c(c-t)$ in the bounded case.
Let $\dl_u$ denote the partial derivative in direction $u\in X_c.$ By the Leibniz rule, for any polynomial $p(t)$ there exist polynomials $p_1(t),p_0(t)$ such that
$$\lf(t)^{-\ln}\dl_u\(\lf^\ln\lq^\lb N_c^\lg\ p\)(t)=\lq^{\lb-1}(t)\ N_c(t)^{\lg-1}(\ln p_1(t)+p_0(t)),$$
with highest term $p_1(t)$ given by $(t|u)\ N_c(t)\ p(t)$ and $(\dl_u N_c)(c-t)\ N_c(t)\ p(t),$ respectively. By induction, there exist polynomials $p_0(t),\ldots,p_k(t),$ such that
$$\lf(t)^{-\ln}\dl_{u_1}\ldots\dl_{u_k}\(\lf^\ln\lq^\lb N_c^\lg p\)(t)=\lq(t)^{\lb-k}\ N_c(t)^{\lg-k}\S_{j=0}^k\ln^j p_j(t),$$
with highest term $p_k(t)$ given by $(t|u_1)\ldots(t|u_k)\ N_c(t)^k p(t)$ and $(\dl_{u_1}N_c)(c-t)\ldots(\dl_{u_k}N_c)(c-t)\ N_c(t)^k p(t),$ respectively. Now let $p(t)$ be $L_c$-invariant. Then there exist $L_c$-invariant polynomials $q_0(t),\ldots,q_\el(t)$ such that
$$\lf(t)^{-\ln}\dl_{N_c}\(\lf^\ln\lq^\lb N_c^\lg p\)(t)=\lq(t)^{\lb-\el}(t)\ N_c(t)^{\lg-\el}\S_{j=0}^\el\ln^j q_j(t),$$
with highest term $q_\el(t)$ given by $N_c(c)N_c(t)^\el p(t)=N_c(t)^\el p(t)$ and
$N_c(\nabla_{c-t}N_c)N_c(t)^\el p(t)=N_c(c-t)^{\el-1}N_c(t)^\el p(t),$ respectively. Here we use $N_c(\nabla_{c-t}N_c)=N_c(N_c(c-t)(c-t)^{-1})=N_c(c-t)^\el N_c((c-t)^{-1})=N_c(c-t)^{\el-1}.$ Hence for each complex $\la$ there exist $L_c$-invariant polynomials $q_0^\la(c),\ldots,q_\el^\la(t)$ such that
\be{5y}\lf(t)^{-\ln}\ D_\la\(\lf^\ln\lq^\lb N_c^\lg p\)(t)=N_c(t)^{1-\la}\ \lf(t)^{-\ln}\ \dl_{N_c}\(\lf^\ln\lq^\lb N_c^{\lg+\la}p\)(t)
=\lq(t)^{\lb-\el}\ N_c(t)^{1+\lg-\el}\S_{j=0}^\el q_j^\la(t),\ee
with highest term $q_\el^\la(t)=q_\el(t)$ independent of $\la.$ According to \er{4q}, there is a factorization $\DL_\el=D_{\la_q}\ldots D_{\la_1}$ for suitable parameters $\la_1,\ldots,\la_q.$ Iterating \er{5y} we obtain, using obvious notation
\be{5t}\lf(t)^{-\ln}\DL_\el\(\lf(t)^\ln\lq(t)^\lb N_c(t)^\lg p(t)\)
=\lq(t)^{\lb-q\el}N_c(t)^{\lg-q(\el-1)}\S_{j_1,\ldots,j_q=1}^\el\ln^{j_1}\ldots\ln^{j_q}p_{j_1,j_2,\ldots,j_q}^{\la_1,\la_2,\ldots,\la_q}(t)$$
$$=\lq(t)^{\lb-q\el}N_c(t)^{\lg-q(\el-1)}\S_{j=0}^{q\el}\ln^j Q_j(t),\ee
with highest term $Q_{q\el(t)}=q_{\el,\ldots,\el}^{\la_1,\ldots,\la_q}(t)$ equal to $N_c(t)^{q\el}p(t)$ and
$N_c(c-t)^{q(\el-1)}N(t)^{q\el}p(t),$ respectively. Now set $p(t)=1$ and put $q:=(r-\el)a+b=d''_\el/\el.$ We apply Theorem~\ref{5a} to
$X_c$, with $\la=\f{d'_\el}\el$ and $\lb=\f{d_\el}\el=\la+q.$ In the flat case we obtain
$$e^{-\ln(t|c)}\ \KL^\ln(\F t,\F t)=\ln^{d_\el}\f{\lG_\el(d'_\el/\el)}{\lG_\el(d_\el/\el)}\ e^{-\ln(t|c)}\ \DL_\el
\ {}_1F_1\Hy{d'_\el/\el}{d_\el/\el}{\ln t}$$
$$=\ln^{d_\el}\ e^{-\ln(t|c)}\ \DL_\el\(\f{e^{\ln(t|c)}}{N_c(\ln t)^q}(1+O(e^{-\ld\ln|t|}))\)
\approx\ln^{d'_\el}\ e^{-\ln(t|c)}\ \DL_\el\(\f{e^{\ln(t|c)}}{N_c(t)^q}\)=\f{\ln^{d'_\el}}{N_c(t)^{q\el}}\S_{j=0}^{q\el}\ln^j\ Q_j(t)$$
as $\ln\to+\oo$, uniformly for $t$ in a compact subset of~$\lO_c$, with some $\ld>0$. In the bounded case we obtain
$$N_c(c-t)^\ln\ \KL^\ln(\F t,\F t)=\f{\lG_\el(d'_\el/\el)\ \lG_\el(\ln)}{\lG_\el(d_\el/\el)\ \lG_\el(\ln-d_\el/\el)}
\ N_c(c-t)^\ln\ \DL_\el\ {}_2F_1\Hy{d'_\el/\el,\ln}{d_\el/\el}t$$
$$\approx\f{\lG_\el(\ln-q)}{\lG_\el(\ln-d_\el/\el)}\ N_c(c-t)^\ln\ \DL_\el\(\f{N_c(c-t)^{q-\ln}}{N_c(t)^q}[1+O(\ld^{-\ln})]\)
=\f{\lG_\el(\ln-q)}{\lG_\el(\ln-d_\el/\el)}\f{N_c(c-t)^{q(1-\el)}}{N_c(t)^{q\el}}\S_{j=0}^{q\el}\ln^j\ Q_j(t)$$
as $\ln\to+\oo$, uniformly for $t$ in a compact subset of~$\lO_c\ui(c-\lO_c)$, with some $\ld>0$.
\end{proof}

Replacing $\f{\lG_\el(\ln-d''_\el/\el)}{\lG_\el(\ln-d_\el/\el)}$ by its asymptotic expansion $\ln^{d'_\el}-\f{d'_\el d_\el}\el\ln^{d'_\el-1}+\dots$ as $\ln\to+\oo$, one obtains from \er{5s} the TYZ expansion of $N_c(c-x)^\ln\ \KL^\ln(\F x,\F x)$ into decreasing powers of~$\ln$ with leading power $\ln^{d_\el},$ which, however, does not to terminate in general.

\section{Invariant measures and $n$-forms}
The Riemann measure $d\lL_\el$ on $\r Z_\el$ is $K$-invariant, but has no invariance properties with respect to $\r K$ or a transitive subgroup. We will now investigate the existence of invariant measures and holomorphic differential forms of top-degree. A~complex manifold $M$ of dimension $n$ has a trivial canonical bundle if there exists a nowhere vanishing holomorphic $n$-form $\lT$ on $M$. If
$M:=G/H$ for a complex Lie group $G$, with Lie algebra $\gL$, and a closed complex Lie subgroup $H$, with Lie algebra $\hL\ic\gL$, a holomorphic $n$-form $\lT$ is called $G$-invariant if the pull-back $g^*\lT=\lT$ for all $g\in G$. In other words,
$$\lT_{g(z)}(\lz_1,\ldots,\lz_n)=\lT_z(T_z(g)^{-1}\lz_1,\ldots,T_z(g)^{-1}\lz_n)$$
for all $g\in G,\ z\in M$ and holomorphic tangent vectors $\lz_i\in T_{g(z)}(M)$. Let $\Le:G\to M:=G/H$ be the evaluation map
$\Le(g):=g(o)$ at the base point $o=H\in M$, with differential $T_e(\Le):\gL\to T_o(M)$ for the tangent space $T_o(M)\al\gL/\hL$.
Every $h\in H$ defines a linear transformation $T_o(h)$ acting on $T_o(M)$. It is well-known \cite[Lemma 1.5 and Proposition 1.6]{H} that there exists a non-zero $G$-invariant holomorphic $n$-form $\lT$ on $G/H$ if and only if $\det T_o(h)=1$ for all $h\in H$. In this case the $(n,n)$-form $\lT\yi\o\lT$ induces (after multiplying by a constant) a $G$-invariant measure on $M$. More generally
\cite[Theorem 1.7]{H}, a non-zero invariant measure $\lM$ exists if and only if the weaker condition $|\det T_o(h)|=1$ holds for all
$h\in H$.

In this section we consider such questions for the Kepler manifold $M=\r Z_\el$ and a suitable subgroup $G\ic\r K$ acting transitively. If $Z$ is of tube type and $\el=r$ we take $G:=\r K$. In all other cases ($Z$ not of tube type or $\el<r$) $\r Z_\el$ is homogeneous under the closed subgroup $\r K_0$ generated by all commutators in $\r K,$ since the evaluation map from the Lie algebra $\r\kL_0$ to the tangent space $T_c(\r Z_\el)$ is surjective. The group $\r K_0$ is connected, being the closure of a countable union of connected sets centered at the identity. The group $\r K_1:=\{g\in\r K:\ |\det_Z g|=1\}$ contains both $\r K_0$ and $K.$

By \cite[Lemma 2.32]{Sch} every $g\in\r K$ satisfies
\be{6a}g Z_2^z=Z_2^{gz},\ g^*Z_0^{gz}=Z_0^z\ee
for all $z\in Z$, using the generalized Peirce spaces studied in \cite{Sch}. Define the subgroup
$$\r K^U:=\{g\in\r K:\ gU=U\}=\{g\in\r K:\ gc\in U\}\ic\r K,$$
with Lie algebra $\r\kL^U:=\{A\in\kL:\ AU\ic U\}$, and its subgroup
$$\r K^c:=\{h\in\r K:\ gc=c\}\ic\r K^U,$$
with Lie algebra $\r\kL^c:=\{A\in\kL:\ Ac=0\}$. A similar notation will be used for $\r K_0$.

\begin{lemma} Every $g\in\r K^U$ satisfies $gW^\perp=W^\perp$.
\end{lemma}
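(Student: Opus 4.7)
The plan is to pass to adjoints and reduce the claim to $g^*W=W.$ Since the Peirce decomposition $Z=U\oplus V\oplus W$ is orthogonal with respect to the $K$-invariant hermitian inner product on $Z,$ the subspace $W^\perp=U\oplus V$ is $g$-invariant precisely when $W$ is $g^*$-invariant; once the inclusion $g^*W\subseteq W$ is shown for every $g\in\r K^U,$ applying it also to $g^{-1}\in\r K^U$ yields equality and hence $gW^\perp=W^\perp.$

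To produce $g^*W=W$ I would exploit the transformation rule \eqref{6a} at $z=c.$ It supplies two pieces of information simultaneously:
$$Z_2^{gc}=gZ_2^c=gU=U,\qquad g^*Z_0^{gc}=Z_0^c=W.$$
The second equation delivers the conclusion the instant one knows $Z_0^{gc}=W.$ To see the latter, note that $\r K$ preserves ranks, so $gc$ is of rank~$\el$; it lies inside its own Peirce 2-space $Z_2^{gc}=U$ (a Jordan algebra of rank~$\el$) and is therefore invertible in~$U.$ Its support tripotent $\t c$ is consequently a maximal tripotent of~$U,$ and the generalized Peirce spaces of $gc$ coincide with the ordinary Peirce spaces of~$\t c.$

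The core technical step is then the comparison of Peirce decompositions of~$Z$ induced by two different maximal tripotents $c$ and $\t c$ of the same Jordan subalgebra~$U.$ The cleanest argument is that any two such tripotents are conjugate via some $u\in K\ui\r K^U,$ and such a Jordan-triple automorphism preserves the Peirce components $U,V,W$ of~$Z;$ from this one reads off that the eigenspaces of $D(\t c,\t c)$ coincide with $U,V,W,$ so in particular $Z_0^{\t c}=W.$ Alternatively, a direct computation using the Peirce multiplication rules yields $Q_{\t c}^2=Q_c^2$ and $D(\t c,\t c)=D(c,c)$ on each of $U,V,W,$ hence $B(\t c,\t c)=B(c,c),$ whose range is~$W.$ Either way, this Peirce-theoretic identification of $Z_0^{gc}$ is the main obstacle; once granted, combining with \eqref{6a} immediately gives $g^*W=W,$ and hence $gW^\perp=W^\perp.$
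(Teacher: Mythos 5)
Your overall strategy is the same as the paper's: pass to adjoints, invoke \eqref{6a} at $z=c$, and reduce everything to the identity $Z_0^{gc}=W$. The paper closes this last step by directly citing Schwarz's Peirce-equivalence result \cite[Proposition~2.19]{Sch}, applied to the elements $c$ and $gc$ (using only $Z_2^{gc}=gU=U=Z_c^2$); you instead pass to the support tripotent $\t c$ of $gc$ and then try to compare the $Z$-Peirce decompositions of the two maximal tripotents $c$ and $\t c$ of $U$. You correctly single this comparison out as the ``main obstacle,'' but neither of your two sketches actually closes it.

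The conjugation route is circular: the claim that $u\in K\ui\r K^U$ with $uc=\t c$ ``preserves the Peirce components $U,V,W$'' is precisely the lemma you are proving, specialized to the subgroup $K\ui\r K^U$ (for unitary $u$, preserving $V$ and $W$ is equivalent to $uW^\perp=W^\perp$). The ``direct computation'' is also not immediate: the single-tripotent Peirce rules for $c$ do give $Q_{\t c}^2=0=Q_c^2$ on $V\oplus W$, since $\{Z_c^2;Z_c^1;Z_c^2\}\subseteq Z_c^3=0$ and $\{Z_c^2;Z_c^0;Z_c^2\}\subseteq Z_c^4=0$; but from $\{Z_c^2;Z_c^2;Z_c^k\}\subseteq Z_c^k$ one only learns that $D(\t c,\t c)$ preserves $V$ and preserves $W$, not that $D(\t c,\t c)|_V=\mathrm{id}$ and $D(\t c,\t c)|_W=0$. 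A~priori both $V$ and $W$ could split between the $0$- and $1$-eigenspaces of $D(\t c,\t c)$, so $B(\t c,\t c)=B(c,c)$ does not follow from the stated rules. One way to close the gap without the citation is to prove the intrinsic characterization $W=\{z\in Z:\,D(u,z)=0\ \forall u\in U\}$ (the inclusion $\supseteq$ uses $D(c,z)=0$ together with the fact that $D(c,z_2)$ preserves the Peirce grading while $D(c,z_1)$ raises it, forcing $z_2=z_1=0$); applied to $\t c$, whose Peirce $2$-space is also $U$, this gives $Z_{\t c}^0=W$ outright. Alternatively, the same characterization combined with the conformal covariance $gD(u,z)g^{-1}=D(gu,g^{-*}z)$ of elements of $\r K$ yields $g^*W=W$ directly for $g\in\r K^U$, bypassing \eqref{6a} and support tripotents altogether.
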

\begin{proof} For $g\in\r K^U$ we have $Z_2^{gc}=gZ_c^2=gU=U=Z_c^2$. Therefore \cite[Proposition 2.19]{Sch} implies that $c$ and $gc$ are Peirce equivalent. It follows that $W=Z_c^0=Z_0^{gc}=g^{-*}Z_c^0=g^{-*}W$. This implies the assertion.
\end{proof}

In particular every $h\in\r K^c\ic\r K^U$ satisfies $hW^\perp=W^\perp$. Since $T_c(\r Z_\el)=U\op V=W^\perp$ there exists a non-zero holomorphic $d_\el$-form on $\r Z_\el$ which is invariant under $\r K$ (resp., $\r K_0$) if and only if $\det\ h|_{W^\perp}=1$ for all
$h\in\r K^c$ (resp. $\r K^c_0$).

The case where $Z$ is not of tube type is uninteresting. In fact, for $\el=r$ the group $\r K_0$ acts transitively on $\r Z_r$ and the constant $d$-form $\lT=dz_1\yi\ldots\yi dz_d$ is $\r K_0$-invariant since $\det g=1$ for all $g\in\r K_0$. On the other hand, if $Z$ is not of tube type and $0<\el<r$ then there exists $A\in\r\kL_0$ with $Ac=0$ and $\tr A|_{W^\perp}\ne 0$. It follows that the range of
$\det h|_{W^\perp},$ with $h\in\r K^c_0,$ is all of $\Cl\setminus\{0\}$. Thus a non-zero $\r K_0$-invariant $d_\el$-form (or an invariant measure) on $\r Z_\el$ cannot exist.

From now on we assume that $Z$ is of tube type. Consider first the case $\el=r$. Then $\r Z_r=\r Z$ consists of all invertible elements in the Jordan algebra $Z$ with unit element $e$ and Jordan determinant $N$. Moreover, $U=W^\perp=Z$. For $A\in\r\kL$ we have
$${\tr}_Z A=\f dr(Ae|e)=\f p2(Ae|e).$$
If $p=2k$ is even, then ${\det}_Z g=N(ge)^k$ for all $g\in\r K$. This implies ${\det}_Z h=N(he)^k=N(e)^k=1$ for all $h\in\r K^e$. Thus there exists a non-zero $\r K$-invariant $d$-form $\lT$ on $\r Z$ which is given by
$$\lT=\f{dz_1\yi\ldots\yi dz_d}{N(z)^k}$$
for all $z\in\r Z$, since $N(gz)=N(z)\ N(ge)$ for all $g\in\r K$. On the other hand, if $p$ is odd, there is no holomorphic square-root of $N(z)^p$ and an invariant $d$-form does not exist. This occurs in two situations: Either $Z$ is a spin factor of odd dimension, or $Z=\Cl^{r\xx r}_{sym}$ with $r$ even. However, since $|{\det}_Z h|=|N(he)|^{p/2}=1$ for all $h\in\r K^e$, we always have an invariant positive measure $\lM$ on $\r Z$, which (up to a positive constant) is given by
$$d\lM(z)=\(\f i2\)^d\ \f{dz_1\yi d\o z_1\yi\ldots\yi dz_d\yi d\o z_d}{|N(z)|^p}.$$
If $p$ is even, the measure $\lM$ and the holomorphic $d$-form $\lT$ are related by $\lM=\(\f i2\)^d\ (-1)^{d(d-1)/2}\ \lT\yi\o\lT.$

Thus the most interesting case is when $Z$ is of tube type and $0<\el<r$. For $1\le i,j\le r$  let $\r\kL_{ij}$ denote the complex vector space generated by all linear transformations $D(x,y)$, where $x\in Z_{ik},\ y\in Z_{kj}$ and $1\le k\le r$ is arbitrary.

\begin{lemma}\label{6b} $\r\kL^U$ contains the spaces $\r\kL_{ij}$ if $i,j\le\el$, or $\el<i,j$, or $i\le\el<j$.
\end{lemma}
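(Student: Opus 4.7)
The plan is to apply the joint Peirce decomposition $Z=\bigoplus_{1\le i\le j\le r}Z_{ij}$ associated with the frame $e_1,\ldots,e_r$ (see \cite[Theorem 3.14]{L2}). Since $c=e_1+\cdots+e_\el$, the Peirce $2$-space of $c$ is
$$U=Z_c^2=\bigoplus_{1\le i\le j\le\el}Z_{ij},$$
so it suffices to verify, for each generator $D(x,y)$ of $\r\kL_{ij}$ with $x\in Z_{ik}$ and $y\in Z_{kj}$, that $D(x,y)Z_{mn}\subset U$ whenever $m,n\le\el$.

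The workhorse will be the Peirce ``weight balance'' rule for joint Peirce spaces (essentially \cite[JP21]{L2}): viewing each $Z_{pq}$ as a joint eigenspace of the commuting family $\{D(e_\nu,e_\nu)\}_\nu$, the triple bracket $\{Z_{ik};Z_{kj};Z_{mn}\}$ must sit in the joint eigenspace of weight $(e_i+e_k)-(e_k+e_j)+(e_m+e_n)=e_i+e_m+e_n-e_j$. Consequently the bracket vanishes unless $j\in\{m,n\}$, and in that case its image lies in $Z_{p,q}$ where $\{p,q\}$ is obtained from $\{m,n\}$ by replacing the index $j$ with the index $i$.

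With this rule in hand, the three cases become one-line index checks. When $i,j\le\el$ and $m,n\le\el$, the substitution $j\mapsto i$ stays within $\{1,\ldots,\el\}$, so the image lies in $U$. When $\el<i,j$ or $i\le\el<j$, we have $j>\el\ge m,n$, so $j\notin\{m,n\}$ and the bracket annihilates $U$ altogether. In all three cases $D(x,y)U\subset U$, and since $\r\kL^U$ is a $\Cl$-linear subspace the containment extends from the generators $D(x,y)$ to their full complex span $\r\kL_{ij}$.

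I expect no real obstacle: the argument is pure bookkeeping in the Peirce calculus. The only points requiring attention are (i)~quoting the multiplication rule in the \emph{joint} decomposition rather than the coarser $2$-$1$-$0$ decomposition of a single tripotent, and (ii)~observing that in the last two cases the inclusion holds \emph{vacuously} by vanishing, rather than by mapping $U$ nontrivially into itself.
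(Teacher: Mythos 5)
Your proof is correct in its essentials, but it takes a genuinely different route from the paper, and it contains one small slip worth fixing. The paper classifies the elements $x\in Z_{ik}$, $y\in Z_{kj}$ according to the \emph{coarse} Peirce decomposition of the single tripotent $c$ (i.e.\ which of $U=Z_c^2$, $V=Z_c^1$, $W=Z_c^0$ they lie~in, depending on whether $k\le\el$ or $k>\el$) and then invokes the $2$-$1$-$0$ multiplication rules: in cases 1 and~2 one has $x,y\in Z_\la^c$ for the same $\la$, so $\{Z_\la^c;Z_\la^c;U\}\subset U$; and in case~3, $\{U;V;U\}=\{V;W;U\}=0$. Your argument instead stays entirely in the \emph{joint} Peirce decomposition and reads off the image from the eigenvalues of the commuting family $D(e_\nu,e_\nu)$. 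The coarse route is arguably slicker here because the three cases fold into two lines, but the weight bookkeeping is a legitimate and more systematic alternative (it is also essentially what the paper uses in the neighboring Lemma~6.5).

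The slip: your vanishing criterion ``the bracket vanishes unless $j\in\{m,n\}$'' is not quite right. The weight of $\{Z_{ik};Z_{kj};Z_{mn}\}$ at the index $j$ is $\ld_i^j-1+\ld_m^j+\ld_n^j$; this is nonnegative not only when $j\in\{m,n\}$ but also when $j=i$, and the lemma includes the diagonal $\r\kL_{ii}$. Thus in your case~2 with $\el<i=j$, the bracket need not annihilate $U$: it maps $Z_{mn}$ into $Z_{mn}$, which is still contained in~$U$, so the conclusion $D(x,y)U\subset U$ survives, but the stated reason (``vanishes'') is false. The correct statement is that the bracket vanishes unless $j\in\{i,m,n\}$, and in the non-vanishing case the image sits in the joint Peirce space indexed by the multiset $\{i,m,n\}\setminus\{j\}$. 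With that correction, all three of your case checks go through unchanged.
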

\begin{proof} Let $x\in Z_{ik},\ y\in Z_{kj}$ for some $k$. If $i,j\le\el$ then $x,y\in\begin{cases}U&k\le\el\\V&k>\el\end{cases}$. If
$i,j>\el$ then $x,y\in\begin{cases}V&k\le\el\\W&k>\el\end{cases}$. In both cases $x,y\in Z_\la^c$ for the same $\la=0,1,2$, and the Peirce multiplication rules \cite[Theorem 3.13]{L2} imply $\{x;y;U\}\ic\{Z_\la^c;Z_\la^c;U\}\ic U$. If $i\le\el<j$ then $x\in\begin{cases}U&k\le\el\\V&k>\el\end{cases}$ and $y\in\begin{cases}V&k\le\el\\W&k>\el\end{cases}$. Since $\{U;V;U\}=\{0\}=\{V;W;U\}$ the assertion follows in the third case.
\end{proof}

Consider the commuting vector fields $D(e_k,e_k),\ 1\le k\le r$. By \cite[Lemma 1.5]{U2} we have
\be{6c}[D(c_k,c_k),A]=(\ld_i^k-\ld_j^k)A\ee
for all $k$ and $A\in\r\kL_{ij}$. Denote by $\tL_{-1}\ic\kL$ the real span of $iD(e_k,e_k)$ for $1\le k\le r$, and let $\tL$ be a maximal abelian subalgebra of $\kL$ containing $\tL_{-1}$. By \cite[Lemma 1.2]{U2} there is a decomposition $\tL=\tL_{-1}\op\ \tL_1$, where $\tL_1=\{A\in\tL:\ Ae_k=0\ \forall k\}$. Let
$$\r\kL=\tL^\Cl\op\S_{\la}\r\kL_\la$$
denote the {\bf root decomposition} of $\r\kL$ under $\tL^\Cl.$ By \cite[Theorem 1.7]{U2}, we have
$$\S_i\r\kL_{ii}=\tL^\Cl\op\S_{\la|_{\tL_{-1}=0}}\r\kL_\la.$$
Together with \cite[Corollary 1.6]{U2} we obtain
$$\r\kL=\S_{i=1}^r\r\kL_{ii}\ \op\ {\S_{i\ne j}}^\op\ \r\kL_{ij}=\tL^\Cl\op\S_{\la|_{\tL_{-1}=0}}\r\kL_\la\op{\S_{i\ne j}}^\op\ \r\kL_{ij}.$$
It follows that every $A\in\r\kL$ has a unique decomposition
\be{6d}A=\lx+\lh+\S_{\la|_{\tL_{-1}=0}}A_\la+\S_{i\ne j}A_{ij},\ee
where $A_{ij}\in\r\kL_{ij}$, $\lh\in\tL_1^\Cl$ satisfies $\lh e_k=0$ for all $k$, $A_\la\in\r\kL_\la$ and
$$\lx=\S_k\ll_k\ D(c_k,c_k)\in\tL_{-1}^\Cl.$$

\begin{lemma}\label{6e} Let $A\in\r\kL^U$ have the decomposition \er{6d}. Then $A_{ij}\in\r\kL^U$ for all $i,j$.
\end{lemma}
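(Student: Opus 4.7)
The plan is to recognize the decomposition (6.d) as the weight decomposition of $A$ under the adjoint action of the abelian subalgebra $\tL_{-1}$, and then exploit the fact that $\r\kL^U$ is preserved by this action, so that every weight component of $A$ must individually lie in $\r\kL^U$.

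First I would show that $\tL_{-1}\subset\r\kL^U$. The generators $iD(e_k,e_k)$ act by scalars on each joint Peirce space $Z_{pq}$, namely by $i(\ld_{pk}+\ld_{qk})$ in the usual normalization, and therefore preserve the direct sum $U=Z_c^2=\bigoplus_{p,q\le\el}Z_{pq}$. Exponentiating, the torus $T_{-1}:=\exp\tL_{-1}$ is contained in $\r K^U$. Since $\r K^U$ is then normalized by $T_{-1}$, the Lie algebra $\r\kL^U$ is $\operatorname{Ad}(T_{-1})$-invariant, and hence $\operatorname{ad}(\tL_{-1})$-invariant.

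Next I would verify that (6.d) is literally the eigenspace decomposition of $A$ under $\operatorname{ad}(\tL_{-1})$. By formula (6.c), for $i\ne j$ the subspace $\r\kL_{ij}$ lies in the eigenspace for the linear functional $D(c_k,c_k)\mapsto\ld_i^k-\ld_j^k$ on $\tL_{-1}$; since the elementary characters $\ld_i$ are linearly independent on $\tL_{-1}$, these eigenfunctionals are pairwise distinct and nonzero, so the various $\r\kL_{ij}$ ($i\ne j$) sit in distinct nonzero weight spaces. By \cite[Theorem~1.7]{U2}, the complementary piece $\sum_i\r\kL_{ii}=\tL^\Cl\oplus\sum_{\la|_{\tL_{-1}}=0}\r\kL_\la$ is exactly the 0-weight space. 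Thus (6.d) expresses $A$ as a sum over distinct $\operatorname{ad}(\tL_{-1})$-weight components, with each $A_{ij}$ ($i\ne j$) accounting for the weight $\ld_i-\ld_j$ and $\lx+\lh+\sum_\la A_\la$ accounting for weight~$0$.

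Finally, any $\operatorname{ad}(\tL_{-1})$-invariant subspace of $\r\kL$ automatically decomposes as the direct sum of its intersections with the weight spaces. Applying this to $\r\kL^U$, each weight component of $A\in\r\kL^U$ must lie in $\r\kL^U$, and in particular every $A_{ij}$ appearing in (6.d) lies in $\r\kL^U$. The main thing to pin down carefully is the very first step -- that $\tL_{-1}$ indeed preserves $U$ -- since once this is secured, the remainder of the argument is a formal consequence of the weight-space decomposition; no further delicate computation is needed.
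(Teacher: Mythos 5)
Your proof is correct, and it takes a cleaner, more structural route than the paper's. The paper first uses Lemma~\ref{6b} to subtract off the components $A_{ij}$ already known to lie in $\r\kL^U$ (those with $i,j\le\el$, or $\el<i,j$, or $i\le\el<j$) together with the root/Cartan part, reducing to the single ``hard'' case $A=\sum_{j\le\el<i}A_{ij}$; it then evaluates $[D(c_k,c_k),A]u$ for specific choices of $k$, and finishes by observing that the vectors $A_{ik}u$ land in pairwise orthogonal Peirce summands inside $U^\perp$, forcing each to vanish. In contrast, you observe that $\tL_{-1}\subset\r\kL^U$ (since each $D(e_k,e_k)$ preserves all Peirce spaces, hence $U$), recognize \er{6d} as the $\operatorname{ad}(\tL_{-1})$-weight decomposition with pairwise distinct weights $\ld_i-\ld_j$, and invoke the general fact that an $\operatorname{ad}(\tL_{-1})$-invariant subspace splits into its intersections with the weight spaces. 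This bypasses Lemma~\ref{6b} entirely and eliminates the case-by-case orthogonality argument; the paper's computation with $[D(c_k,c_k),\cdot]$ is effectively the by-hand version of your abstract weight-separation. One small simplification available to you: there is no need to exponentiate and argue through $T_{-1}$ normalizing $\r K^U$; it is enough to note that $\r\kL^U$ is a Lie subalgebra of $\r\kL$ (immediate from its definition as the stabilizer of $U$) containing $\tL_{-1}$, hence automatically $\operatorname{ad}(\tL_{-1})$-stable.
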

\begin{proof} In view of Lemma~\ref{6b} we may assume that $A=\S_{j\le\el<i}A_{ij}.$ Applying \er{6c} yields
$$[D(c_k,c_k),A]=\S_{j\le\el<i}[D(c_k,c_k),A_{ij}]=\S_{j\le\el<i}(\ld_i^k-\ld_j^k)A_{ij}$$
for all $k$. For $u\in U$ we have $[D(c_k,c_k),A]u=D(c_k,c_k)Au-AD(c_k,c_k)u\in U$ since $AU\ic U$. It follows that
$$\S_{j\le\el<i}(\ld_i^k-\ld_j^k)A_{ij}u\in U$$
for all $k$. Choosing $k\le\el$ we obtain $-\S_{\el<i}A_{ik}u\in U.$ The components $A_{ik}u\in\S_{m\le\el}Z_{im}$ belong to pairwise orthogonal subspaces of $U^\perp$. Thus each $A_{ik}u=0$ and hence $A_{ik}\in\r\kL^U$. Choosing $\el<k$ we obtain
$\S_{j\le\el}A_{kj}u\in U.$ The components $A_{kj}u\in\S_{m\le\el}Z_{km}$ belong to pairwise orthogonal subspaces of $U^\perp$. Thus each $A_{kj}u=0$ and hence $A_{kj}\in\r\kL^U$.
\end{proof}

\begin{proposition}\label{6f} Let $Z$ be of tube type. Then we have for all $A\in\r\kL^U$
$${\tr}_{W^\perp}A=\f{p-a\el}{2+a(\el-1)}{\tr}_U A+\f{a\el}p{\tr}_Z A=\f{p-a\el}2(Ac|c)+\f{a\el}2(Ae|e).$$
\end{proposition}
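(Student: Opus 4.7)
Both sides of the identity are $\Cl$-linear in $A$, so I apply the decomposition~\er{6d} provided by Lemma~\ref{6e}:
$$A=\lx+\lh+\S_{\la|_{\tL_{-1}=0}}A_\la+\S_{i\ne j}A_{ij},$$
with $\lx\in\tL_{-1}^\Cl$, $\lh\in\tL_1^\Cl$, and each $A_\la,A_{ij}$ a root vector. Each summand lies in $\r\kL^U$: the $A_{ij}$ by Lemma~\ref{6e}, and the remaining pieces because they commute with $\tL_{-1}^\Cl$ and hence preserve every joint Peirce space $Z_{mn}$, in particular $U$. It therefore suffices to verify the identity on each type of summand separately.

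For the off-diagonal part $A_{ij}$ with $i\ne j$, the weight relation~\er{6c} shows $A_{ij}$ shifts the index $j$ to $i$ on the joint Peirce decomposition; in particular it is strictly off-diagonal on any Peirce-invariant subspace, so $\tr_{W^\perp}A_{ij}=0$. Moreover $A_{ij}e=A_{ij}e_j\in Z_{ij}$ is orthogonal to every $Z_{mm}$, so $(A_{ij}e|e)=(A_{ij}c|c)=0$ and both sides vanish. The same weight argument, with $\tL_1^\Cl$-weights replacing $\tL_{-1}^\Cl$-weights, handles the $A_\la$ with $\la|_{\tL_{-1}}=0$: such $A_\la$ preserves each $Z_{mn}$ but shifts $\tL_1^\Cl$-weights, so has trace zero on each $Z_{mn}$, while $A_\la e_k$ must vanish as $Z_{kk}$ is a $\tL_1^\Cl$-weight-zero line; hence $(A_\la c|c)=(A_\la e|e)=0$ and both sides again vanish.

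For $\lh\in\tL_1^\Cl$ the right-hand side vanishes since $\lh c=\lh e=0$. To show $\tr_{W^\perp}\lh=0$ it suffices to show $\tr_{Z_{ij}}\lh=0$ for each pair with $Z_{ij}\ic W^\perp$. The case $i=j$ is immediate as $Z_{ii}$ is one-dimensional and $\lh e_i=0$. For $i<j$ the space $Z':=Z_{ii}\op Z_{ij}\op Z_{jj}$ is a rank-$2$ tube-type sub-Jordan-triple with maximal tripotent $e':=e_i+e_j$ and genus $p'=2+a$; since $\lh\in\r\kL$ is a derivation of $Z$, its restriction $\lh|_{Z'}$ is a derivation of $Z'$ and thus lies in the complexified structure algebra of $Z'$, so the trace formula applied inside $Z'$ gives $\tr_{Z'}\lh|_{Z'}=\f{p'}2(\lh e'|e')=0$. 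Subtracting the vanishing contributions from the one-dimensional summands $Z_{ii}$ and $Z_{jj}$ leaves $\tr_{Z_{ij}}\lh=0$.

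Finally, for $\lx=\S_k\ll_k D(e_k,e_k)\in\tL_{-1}^\Cl$ the identity is checked term by term, using that $D(e_k,e_k)$ has eigenvalue $2$ on $Z_{kk}$, $1$ on each $Z_{mk}$ with $m\ne k$, and $0$ elsewhere. For $k\le\el$ one obtains $\tr_U D(e_k,e_k)=2+a(\el-1)$ and $\tr_V D(e_k,e_k)=a(r-\el)$, summing to $2+a(r-1)=p$, while $D(e_k,e_k)c=2e_k=D(e_k,e_k)e$ gives $(Ac|c)=(Ae|e)=2$ and right-hand side $(p-a\el)+a\el=p$. For $k>\el$ one gets $\tr_U D(e_k,e_k)=0$ and $\tr_V D(e_k,e_k)=a\el$ (from the $\el$ spaces $Z_{ik}$, $i\le\el$), while $(Ac|c)=0$ and $(Ae|e)=2$, so both sides equal $a\el$. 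The equivalence of the two displayed forms of the right-hand side then follows from the general identities $\tr_Z A=\f p2(Ae|e)$ and $\tr_U A=\f{2+a(\el-1)}2(Ac|c)$ on $\r\kL^U$, which the same case analysis also yields. The main technical obstacle is the $\tL_1^\Cl$-step, where one must recognise $Z'$ as a sub-Jordan-triple and use that restrictions of derivations remain derivations, so that the rank-$2$ trace formula applies inside~$Z'$.
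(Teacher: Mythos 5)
Your proof is correct, and the overall skeleton (decompose $A$ via \er{6d}, treat each piece separately) is the same as the paper's. The treatments of $\lx$ agree up to bookkeeping, and your weight-shift argument for $A_\la$ and $A_{ij}$ is essentially the paper's commutator argument made explicit (the paper notes $B=[C,B]$ for some $C\in\tL^\Cl$ and concludes $\tr B=0$; you unpack this into "nonzero weight $\Rightarrow$ strictly off-diagonal on each Peirce block", which amounts to the same thing).

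The genuine difference is in the $\lh$ step. The paper applies the trace identity $\tr A=\f{d}{r}(Ae|e)$ to the three Peirce $2$-spaces $Z=Z_e^2$, $U=Z_c^2$, $W=Z_{e-c}^2$, gets $\tr_Z\lh=\tr_U\lh=\tr_W\lh=0$, and deduces $\tr_V\lh=0$ by subtraction. You instead apply the same identity to each rank-$2$ Peirce $2$-space $Z'=Z_{ii}\op Z_{ij}\op Z_{jj}$ and subtract the one-dimensional diagonal pieces to get $\tr_{Z_{ij}}\lh=0$ block by block. Both arguments rely on the same implicit step --- that the restriction of a structure-algebra element preserving a Peirce $2$-space lands in the structure algebra of that space, so the genus-trace identity is available on the subspace --- so the rank-$2$ version is not more elementary in that respect, but it is more local and arguably more transparent. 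One small imprecision: $\lh\in\r\kL=\kL^\Cl$ is not literally a derivation of $Z$ (only the compact real form $\kL$ consists of triple derivations); you should say $\lh$ is a $\Cl$-linear combination of derivations, hence lies in the complexified derivation (structure) algebra, and that this property restricts to $Z'$ because $\lh$, being in $\tL^\Cl$, preserves each joint Peirce space. With that wording fixed, the argument stands.
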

\begin{proof} Consider the decomposition \er{6d} of $A$. For $\lh$ we have ${\tr}_Z\lh=\f dr(\lh e|e)=0$, ${\tr}_U\lh=\f{d_U}{\el}(\lh c|c)=0$ and ${\tr}_W\lh=\f{d_W}{r-\el}(\lh(e-c)|e-c)=0$. Hence also ${\tr}_V\lh=0$, showing that ${\tr}_{W^\perp}\lh=0$. For a fixed root $\la\in\lD$ vanishing on $\tL_{-1}$ choose $H\in\tL$ such that $\la(H)\ne 0$. Then $[H,A_\la]=\la(H)A_\la.$ Similarly,
$$[D(c_k,c_k),A_{ij}]=(\ld_i^k-\ld_j^k)A_{ij}$$
for all $k$. Putting $B=A_\la$ or $B=A_{ij}$ we have $B=[C,B]$ for some $C\in\tL^\Cl$. It follows that ${tr}_Z B=0$. Since
$B\in\r\kL^U$ by Lemma \ref{6e} it follows that $B$ is also a commutator in $\r\kL^U$, which acts on $W^\perp$. Thus
${\tr}_{W^\perp}B=0$. Since $C$ commutes with the Peirce projection $\lp$ onto $U$ and $BU\ic U$, the same argument shows ${\tr}_U B=0$.
Thus it remains to consider $\lx$. For $1\le i\le j\le r$ and $z\in Z_{ij}$ we have
$$\lx z=\S_k\ll_k\ D(c_k,c_k)z=\S_k\ll_k(\ld_i^k+\ld_j^k)z=(\ll_i+\ll_j)z.$$
Thus ${\tr}_{Z_{ij}}\lx=(\ll_i+\ll_j)\dim Z_{ij}.$ Put $\ll':=\ll_1+\ldots+\ll_\el$ and $\ll'':=\ll_{\el+1}+\ldots+\ll_r$. Then
$${\tr}_U\lx=\S_{i\le j\le\el}{\tr}_{Z_{ij}}\lx=a\S_{i<j\le\el}(\ll_i+\ll_j)+2\S_{j\le\el}\ll_j=(a(\el-1)+2)\ll',$$
$${\tr}_V\lx=\S_{i\le\el<j}{\tr}_{Z_{ij}}\lx=a\S_{i\le\el<j}(\ll_i+\ll_j)=a(\el\ll''+(r-\el)\ll'),$$
$${\tr}_W\lx=\S_{\el<i\le j}{\tr}_{Z_{ij}}\lx=a\S_{\el<i<j}(\ll_i+\ll_j)+2\S_{\el<j}\ll_j=(a(r-\el-1)+2)\ll''.$$
Hence ${\tr}_{W^\perp}\lx=p\ll'+a\el\ll''$ and ${\tr}_Z\lx=p(\ll'+\ll'')$. It follows that
$${\tr}_{W^\perp}\lx=\f{p-a\el}{2+a(\el-1)}{\tr}_U\lx+\f{a\el}p{\tr}_Z\lx.$$
\end{proof}

\begin{lemma}\label{6g} Let $s=(s',s'')\in\r U\xx\r W$ satisfy ${\det}_Z P_s=1.$ Then ${\det}_{W^\perp}P_s=N_c(s')^{p-a\el}.$
\end{lemma}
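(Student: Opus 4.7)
The plan is to decompose the operator $P_s$ relative to the Peirce decomposition $Z = U \oplus V \oplus W$ and compute its determinant block by block. First, I will check that $P_s = Q_s Q_e$ preserves each of $U$, $V$, $W$ when $s = s' + s''$ with $s' \in U$, $s'' \in W$. In the tube-type setting, $P_s z = 2(sz)s - s^2 z$ is the Jordan quadratic representation, and the Peirce multiplication rules $U \cdot W = 0$, $U \cdot V \subset V$, $W \cdot V \subset V$ make all $s'/s''$ cross-terms either vanish or remain within the given Peirce space. In particular, $P_s|_U = P_{s'}|_U$ and $P_s|_W = P_{s''}|_W$ are the Jordan quadratic representations of the unital subalgebras $U$ (unit $c$, rank $\el$) and $W$ (unit $e-c$, rank $r-\el$), respectively.

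Next, I invoke the classical identity $\det P_x = N(x)^{2\dim/\operatorname{rank}}$ for tube-type Jordan algebras. Applied in $U$ and $W$, this gives $\det_U P_s = N_c(s')^{p_U}$ and $\det_W P_s = N_{e-c}(s'')^{p_W}$ with $p_U = 2 + a(\el - 1)$ and $p_W = 2 + a(r - \el - 1)$, while applied in $Z$ it gives $\det_Z P_s = N(s)^p$, using the Peirce factorization $N(s' + s'') = N_c(s') \, N_{e-c}(s'')$ of the Jordan determinant. The multiplicativity of $\det$ along the block decomposition $Z = U \oplus V \oplus W$ then forces $\det_V P_s = N_c(s')^{p - p_U} \, N_{e-c}(s'')^{p - p_W} = N_c(s')^{a(r - \el)} \, N_{e-c}(s'')^{a\el}$, so that $\det_{W^\perp} P_s = \det_U P_s \cdot \det_V P_s = N_c(s')^p \, N_{e-c}(s'')^{a\el}$.

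Finally, the hypothesis $\det_Z P_s = 1$ reads $(N_c(s') \, N_{e-c}(s''))^p = 1$. On the connected component containing $s = e$, where one may consistently choose $N(s) = 1$, this gives $N_{e-c}(s'') = N_c(s')^{-1}$, and substituting into the expression above yields the claimed identity $\det_{W^\perp} P_s = N_c(s')^{p - a\el}$. The main technical obstacle will be the careful verification that $P_{s' + s''}$ preserves the Peirce 1-space $V$; this requires working through the Peirce multiplication rules for $V$ against $U$ and $W$, not merely the orthogonality $U \perp W$. The branch-of-root issue in the final step is resolved by restricting to the identity component of the algebraic set $\{\det_Z P_s = 1\}$; a conceptually cleaner route, avoiding the direct computation of $\det_V P_s$, would apply Proposition~\ref{6f} at the Lie-algebra level to convert the multiplicative identity into the trace relation $\tr_{W^\perp} A = \tfrac{p - a\el}{p_U} \tr_U A + \tfrac{a\el}{p} \tr_Z A$.
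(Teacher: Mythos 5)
Your proof is correct and follows essentially the same route as the paper: both rely on $P_s$ being Peirce-diagonal, the formula $\det P_x = N(x)^{\text{genus}}$ on each tube-type Peirce $2$-block, and multiplicativity of the determinant across blocks, landing on $\det_{W^\perp}P_s = N_c(s')^p\,N_{e-c}(s'')^{a\el}$ before invoking $\det_Z P_s=1$. The only divergences are cosmetic (you compute $\det_V P_s$ as an intermediate step rather than dividing $\det_Z$ by $\det_W$ directly) and a welcome extra word of care about the $p$-th-root branch, which the paper's one-line conclusion leaves implicit.
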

\begin{proof} For all $s=(s',s'')\in\r U\xx\r W$ the (Peirce diagonal) transformation $P_s$ satisfies
${\det}_Z P_s=N(P_s e)^{d/r}=N(s)^p=N_c(s')^p\ N_{e-c}(s'')^p.$ Since $W=Z_2^{e-c}$ has the genus $p-a\el$ we have ${\det}_W P_{s''}=N_{e-c}(s'')^{p-a\el}.$ Therefore
$${\det}_{W^\perp}P_s=\f{{\det}_Z P_s}{{\det}_W P_{s''}}=N_c(s')^p\ N_{e-c}(s'')^{a\el}.$$
\end{proof}

\begin{theorem} Let $Z$ be of tube type and $0<\el<r$. Then there exists a $\r K_0$-invariant holomorphic $d_\el$-form $\lT$ on
$\r Z_\el$ if and only if $p-a\el=2+a(r-\el-1)$ is even. (Among all tube type domains, $p-a\el$ is odd only for the symmetric matrices
$Z=\Cl^{r\xx r}_{sym}$ with $r-\el$ even.)
\end{theorem}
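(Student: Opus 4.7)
The plan: apply the criterion recalled in the text (following~\cite{H}) that a nonzero $\r K_0$-invariant holomorphic $d_\el$-form on $\r Z_\el=\r K_0/\r K_0^c$ exists iff the character $\chi(h):=\det_{W^\perp}(h)$ of $\r K_0^c$ is identically~$1$. (Here $T_c(\r Z_\el)=W^\perp$ and $\r K^c\subseteq\r K^U$ preserves $W^\perp$.)

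The first step is to show $\chi$ is trivial on $(\r K_0^c)^0$. For $A\in\r\kL_0^c$ the condition $Ac=0$ kills the term $(Ac|c)$; and since $A$ is a sum of commutators, $\tr_Z A=0$, which via the identity $\tr_Z A=\f p2(Ae|e)$ also kills $(Ae|e)$. Proposition~\ref{6f} then gives $\tr_{W^\perp}A=0$, so $\chi$ descends to a character of the component group $\r K_0^c/(\r K_0^c)^0$.

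The second step uses the quadratic representation to produce explicit components. For $s=s'+s''$ with $s'\in\r U$, $s''\in\r W$, $(s')^2=c$, $(s'')^2=e-c$, and $N(s)=N_c(s')N_{e-c}(s'')=1$ (the last ensuring that $P_s$ lies in the commutator subgroup $\r K_0$), Peirce-diagonality gives $P_sc=(s')^2=c$, so $P_s\in\r K_0^c$, while Lemma~\ref{6g} yields $\chi(P_s)=N_c(s')^{p-a\el}$. For the forward direction, when $p-a\el$ is odd, pick minimal tripotents $c_1\le c$ in $U$ and $c'_1\le e-c$ in $W$ (possible since $0<\el<r$) and set $s'=c-2c_1$, $s''=(e-c)-2c'_1$; then $N_c(s')=N_{e-c}(s'')=-1$, so $N(s)=1$, and $\chi(P_s)=(-1)^{p-a\el}=-1$, obstructing invariance.

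Conversely, when $p-a\el$ is even, $\chi(P_s)=1$ on the family above. The main obstacle is to propagate this to all of $\r K_0^c$; I~would do this by constructing the invariant form directly, namely by pushing the $d_\el$-form $N_c(u)^{-(p-a\el)/2}\,du\wedge dv$ on $\r U\times V$ through the chart $\Lt$ of Lemma~\ref{2f}, which is a well-defined holomorphic form precisely because $(p-a\el)/2\in\Nl$. Its $\r K_0$-invariance then reduces to (i)~the infinitesimal vanishing already established and (ii)~a finite discrete check on the $P_s$-representatives just done; alternatively, one argues that these $P_s$ together with $(\r K_0^c)^0$ generate $\r K_0^c$ via a Peirce-compatible factorization through $\operatorname{Str}(U)\times\operatorname{Str}(W)$ plus the unipotent part acting on~$V$. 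The parenthetical assertion is a parity count: for $Z=\Cl^{r\times r}_{sym}$, $a=1$ and $p=r+1$, so $p-a\el=r-\el+1$ is odd iff $r-\el$ is even.
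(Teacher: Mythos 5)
Your necessity argument is essentially the paper's: construct an order-two $P_s\in\r K_0$ fixing~$c$ via tripotent sums and use Lemma~\ref{6g} to read off $\det_{W^\perp}P_s=N_c(s')^{p-a\el}=-1$. (Both you and the paper assert $P_s\in\r K_0$ from $\det_Z P_s=1$ without comment; this is a mild gap common to the two.) Your parity count for the parenthetical is also fine, though it only verifies the symmetric-matrix case rather than ruling out the others — for the remaining tube-type families $a$ is even, so $p-a\el=2+a(r-\el-1)$ is automatically even.

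The sufficiency direction is where the proposal breaks down. Your candidate form $N_c(u)^{-(p-a\el)/2}\,du\yi dv$ is not the right object: you have omitted the Jacobian of the chart $\Lt$ from Lemma~\ref{2f}, which contributes the factor $\det_V D(u,c)=N_c(u)^{d''_\el/\el}=N_c(u)^{a(r-\el)}$. The correct pullback, given in Proposition~\ref{6h}, is $N_c(u)^{a(r-\el)-k}\,du\yi dv$ with $k=\tfrac{p-a\el}{2}$; the exponent equals $\tfrac a2(r-\el+1)-1$, a nonnegative integer, so the density is actually polynomial, not a negative power of~$N_c$. More importantly, exhibiting a form in a single chart plus checking finitely many $P_s$ does not by itself prove global $\r K_0$-invariance: you would still need to show the form patches and that $\det_{W^\perp}$ is trivial on all of $\r K_0^c$, not just on its identity component and the $P_s$. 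Your alternative — that $(\r K_0^c)^0$ together with these $P_s$ generate $\r K_0^c$ — is exactly the missing hard part, and you do not prove it.

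The paper avoids the component group of the point stabilizer altogether. The key structural fact is that the \emph{Peirce space} stabilizer $\r K_0^U$ is connected, because $M_\el\al\r K_0/\r K_0^U$ is the conformal compactification of $V=Z_c^1$, hence simply connected, so $\pi_0(\r K_0^U)=\pi_1(M_\el)=1$ by the exact homotopy sequence. On this connected group the infinitesimal identity $\tr_{W^\perp}A=k(Ac|c)$ (Proposition~\ref{6f} together with $\tr_Z A=0$ on $\r\kL_0$) integrates to the character identity $\det_{W^\perp}g=N_c(gc)^k$, and then specializing to $h\in\r K_0^c\ic\r K_0^U$ gives $\det_{W^\perp}h=N_c(c)^k=1$. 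That connectedness argument is the ingredient your proposal is missing.
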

\begin{proof} For a diagonal element $s=\S_{j=1}^r s_j c_j$ with $s_1\cdots s_r=1$ consider the transformation $P_sz=Q_sQ_ez$ in
$\r K_0$. Put $s':=s_1\cdots s_\el,\ s'':=s_{\el+1}\cdots s_r$. For $s'=\pm 1$ we have $P_sc=c$. Since ${\det}_Z P_s=1$, Lemma~\ref{6g} implies ${\det}_{W^\perp}P_s=s'^{p-a\el}$. If $p-a\el$ is odd, this becomes $-1$ if $s'=-1$. Thus an invariant $d_\el$-form does not exist. Now assume that $p-a\el=2k$ is even. Since ${\det}_Z g=1$ for all $g\in\r K_0$ Proposition~\ref{6f} implies
$\tr_{W^\perp}A=k(Ac|c)$ for all $A\in \r\kL_0^U$. The Peirce manifold $M_\el\al\r K_0/\r K_0^U$ is the conformal compactification of $V=Z_c^1$ and is therefore simply-connected. The exact homotopy sequence implies that $\r K_0^U$ is connected. Therefore Lemma~\ref{6g} yields ${\det}_{W^\perp}g=N_c(gc)^k$ for all $g\in\r K_0^U$. In particular, ${\det}_{W^\perp}h=N_c(hc)^k=N_c(c)^k=1$ whenever $h\in\r K_c^0$, proving the existence of an invariant $d_\el$-form.
\end{proof}

\begin{proposition}\label{6h} Let $Z$ be of tube type. In case $p-a\el=2k$ is even, the invariant holomorphic $n$-form $\lT$ on
$\r Z_\el$ ($n:=d_\el$) has the local representation
\be{6i}(\ls^*\lT)_{u,v}=N_c(u)^{a(r-\el)-k}\ dz_1\yi\ldots\yi dz_n,\ee
where $a(r-\el)-k=\f a2(r+1-\el)-1.$
\end{proposition}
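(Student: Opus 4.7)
I will compute $(\Lt^*\lT)_{u,v}$ using the $\r K_0$-invariance of $\lT$ together with the holomorphic chart $\Lt(u,v)=B_{v,-c}u$ of Lemma~\ref{2f}, normalizing $\lT_c=dz_1\yi\cdots\yi dz_n$ for coordinates $z_1,\ldots,z_n$ on $W^\perp=U\op V$. The strategy proceeds in two reductions: first remove the $v$-dependence via $B_{v,-c}\in\r K_0$, then handle the $u$-dependence via a lift in $\r K_0^U$.

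\textbf{Step 1.} I first verify $B_{v,-c}\in\r K_0$. The Peirce multiplication rules give $D(v,c)(U)\ic V$, $D(v,c)(V)\ic W$, $D(v,c)(W)=0$, while $Q_vQ_c(U)\ic W$ and $Q_vQ_c$ vanishes on $V\op W$. Hence, with respect to $Z=U\op V\op W$, the operator $B(v,-c)=\mathrm{id}+D(v,c)+Q_vQ_c$ is block lower-triangular with identity diagonal blocks, so $\det_Z B(v,-c)=1$. Since $Z$ is irreducible tube type, the centre of $\r\kL$ is one-dimensional (spanned by $\f12 D(e,e)=\mathrm{id}$), so $\r K_0=[\r K,\r K]$ agrees with the connected kernel of $\det_Z$, placing $B_{v,-c}$ in $\r K_0$. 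Factoring $\Lt'_{u,v}=B_{v,-c}\oc M$ with $M(\lx,\lh)=\lx+D_{u,c}\lh$ acting on $W^\perp$ and invoking invariance of $\lT$ under $B_{v,-c}$, one obtains $(\Lt^*\lT)_{u,v}=M^*\lT_u$, visibly independent of $v$.

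\textbf{Step 2.} Picking an orthonormal basis $e_1,\ldots,e_n$ of $W^\perp$ adapted to $U\op V$, one has $M^*\lT_u(e_1,\ldots,e_n)=\det(M)\,\lT_u(e_1,\ldots,e_n)$. The block-diagonal structure yields $\det M=\det(D_{u,c}|_V)=N_c(u)^{d''_\el/\el}=N_c(u)^{a(r-\el)}$ by \er{2d} and the tube identity $b=0$. For $\lT_u$ I pick any $h_u\in\r K_0$ with $h_uc=u$ (transitivity of $\r K_0$ on $\r Z_\el$); by \er{6a} applied at $z=c$, $h_uU=Z_2^{h_uc}=Z_2^u=U$, so $h_u\in\r K_0^U$ automatically, and consequently $h_u|_{W^\perp}\colon W^\perp\to W^\perp$ is a well-defined endomorphism. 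The formula $\det_{W^\perp}h_u=N_c(h_uc)^k=N_c(u)^k$ derived in the proof of the preceding theorem then gives $\lT_u(e_1,\ldots,e_n)=\lT_c(h_u^{-1}e_1,\ldots,h_u^{-1}e_n)=\det_{W^\perp}(h_u^{-1})=N_c(u)^{-k}$.

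Combining the two factors yields $(\Lt^*\lT)_{u,v}=N_c(u)^{a(r-\el)-k}\,dz_1\yi\cdots\yi dz_n$ as claimed; the arithmetic identity $a(r-\el)-k=\f a2(r+1-\el)-1$ follows from $k=(p-a\el)/2$ and $p=2+a(r-1)$ in tube type. The main technical obstacle is Step~1's identification of $\r K_0$ with the connected kernel of $\det_Z$; once that is settled, every subsequent step reduces to Peirce bookkeeping together with the determinant formula already carried over from the preceding theorem.
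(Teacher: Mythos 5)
Your proof is correct and takes essentially the same route as the paper: both arguments ultimately combine the factor $N_c(u)^{a(r-\el)}$ coming from $\det(D_{u,c}|_V)$ (via \er{2d} with $b=0$) with the factor $N_c(u)^{-k}$ coming from Lemma~\ref{6g}, the paper doing so through the single lift $g=B_{v,-c}P_s$ (so that $g^{-1}\Lt'_{u,v}=P_s^{-1}(id_U\oplus D_{u,c}|_V)$) while you split it into the translation $B_{v,-c}$ and a generic $h_u\in\r K_0^U$. Your added verification that $B_{v,-c}\in\r K_0$, via the block lower-triangular structure of $B(v,-c)$ on $U\op V\op W$ together with the identification of $\r K_0$ with the connected kernel of $\det_Z$, is sound and makes explicit a point the paper leaves implicit in the assertion $B_{v,-c}P_s\in\r K_0$.
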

\begin{proof} Let $\lT_c=dz_1\yi\ldots\yi dz_n=du_1\yi\ldots\yi du_{d'_\el}\yi dv_1\yi\ldots\yi dv_{a\el(r-\el)}.$ Then, up to a constant factor, $\lT$ is given by $\lT_z:=\lT_c\oc\W^n g^{-1}$ on $\W^n T_z(\r Z_\el)=\W^n gW^\perp$, where $g\in\r K_0$ satisfies $gc=z$. Note that \er{6a} implies $g^*Z_0^z=g^*Z_0^{gc}=Z_c^0=W$ so that $gW^\perp={Z_0^z}^\perp=T_z(\r Z_\el)$. Let
$w:=\Lt(u,v)$. Then $Z_0^w=Z_0^{B_{v,-c}u}=B_{c,-v}^{-1}Z_0^u=B_{c,-v}^{-1}W$. For $u\in\r U$ choose $s=(s',s'')\in\r U\xx\r W$ such that $\det P_s=1$ and $P_sc=P_{s'}c=u$. Then $g:=B_{v,-c}P_s\in\r K_0$ satisfies $gc=B_{v,-c}u=z$. Therefore $gU=gZ_c^2=Z_2^{gc}=Z_2^w$ and $Z_0^w=Z_0^{gc}=g^{-1*}Z_c^0=g^{-1*}W$, and hence $gW^\perp=(Z_0^w)^\perp.$ Now Lemma~\ref{2f} implies
$g^{-1}\Lt'_{u,v}(\lx,\lh)=P_s^{-1}B_{v,-c}^{-1}\Lt'_{u,v}(\lx,\lh)=P_s^{-1}(\lx+D_{u,c}\lh).$ It follows that
$$(\ls^*\lT)_{u,v}=\lT_w\oc\W^n \Lt'_{u,v}=\lT_c\oc\W^n g^{-1}\Lt'_{u,v}=\lT_c\oc\W^n P_s^{-1}\oc(id_U\oplus D(u,c)|_V)
=\lT_c\ {\det}_{W^\perp}P_s^{-1}\ {\det}_V D_{u,c}.$$
Since ${\det}_{W^\perp}P_s=N_c(s')^{p-a\el}=N_c(u)^k$ by Lemma~\ref{6g}, the assertion follows with \er{2d}.
\end{proof}

In the example of the {\bf spin factor} $Z=\Cl^{n+1}$ of rank $r=2,$ for $\el=1$ it follows that $p-a\el=2+a(r-\el-1)=2$ is even. Thus there exists a non-zero holomorphic $n$-form $\lT$ on $\r Z_1$ which is invariant under $\r K_0=SO(n+1)$. This $n$-form is well-known
\cite[Lemma~2.2]{OPY} and has the form
$$\lT=\f{dz_1\yi\ldots\yi d z_n}{z_0}$$
on the open dense subset where $z_0\ne 0$. We have $U=\Cl c,\ W=\Cl\o c$ and $V=<c,\o c>^\perp$. For $v\in V$ we obtain
$Q_vc=\f12\{v;c;v\}=-\f12(v|\o v)\o c=-v\cdot v\ \o c$ since $(v|c)=0$. Hence
$$B_{v,-c}c=c+\{v;c;c\}+Q_vQ_cc=c+v+Q_vc=c+v-v\cdot v\ \o c=\(\f{1-v\cdot v}2,i\f{1+v\cdot v}2,v\)$$
and, more generally, $B_{v,-c}uc=\(u\f{1-v\cdot v}2,iu\f{1+v\cdot v}2,uv\)$ for $u\in\Cl.$ Thus $z_0=u\f{1-v\cdot v}2,\ z_1=iu\f{1+v\cdot v}2$ and $z_{k+1}=u\ v_k$ for $1\le k\le a=n-1$. Since
$$(v_1\ du+u\ dv_1)\yi\ldots\yi(v_a\ du+u\ dv_a)=u^a\ dv_1\yi\ldots\yi dv_a
+u^{a-1}\S_{k=1}^a(-1)^{k-1}v_k\ du\yi dv_1\yi\ldots\yi\widehat{dv_k}\yi\ldots\yi dv_a,$$
it follows that
$$-2i dz_1\yi dz_2\yi\ldots\yi d z_n=((1+v\cdot v)du+2u\ v\cdot dv)\yi(v_1\ du+u\ dv_1)\yi\ldots\yi(v_a\ du+u\ dv_a)$$
$$=u^a(1+v\cdot v)du\yi dv_1\yi\ldots\yi dv_a+2u^a\ v\cdot dv\S_{k=1}^a(-1)^{k-1}v_k\ du\yi dv_1\yi\ldots\yi\widehat{dv_k}\yi\ldots\yi dv_a$$
$$=u^a(1+v\cdot v)du\yi dv_1\yi\ldots\yi dv_a-2u^a\S_{k=1}^a v_k^2\ du\yi dv_1\yi\ldots\yi dv_k\yi\ldots\yi dv_a
=u^a(1-v\cdot v)du\yi dv_1\yi\ldots\yi dv_a.$$
Thus, in agreement with the general formula \er{6i},
$$\f{dz_1\yi\ldots\yi d z_n}{z_0}=i\ u^{a-1}du\yi dv_1\yi\ldots\yi dv_a.$$

For all tube domains, every $h\in\r K_c^0$ satisfies $({\det}_{W^\perp}h)^2=1$ and thus $\det_{W^\perp}h=\pm 1$. More generally, we have $|{\det}_{W^\perp}h|=1$ for all $h\in\r K_c^1$. Thus, as in the maximal rank case, there is always a $\r K_1$-invariant measure
$\lM$ on $\r Z_\el$ which is a multiple of $\lT\yi\o\lT$ if $p-a\el$ is even. Since $\r K_1$ contains $K$ this measure has a polar decomposition under the natural $K$-action.

\begin{theorem} For domains of tube type, the invariant measure $\lM$ on $\r Z_\el$ has the polar decomposition
\be{6j}\I_{\r Z_\el}d\lM(z)\ f(z)=\f{|S_\el|}{2^{d'_\el}}\I_{\lO_c}\f{dt}{N_c(t)^{d'_\el/\el}}\ \ N_c(t)^{ar/2}\I_K dk\ f(k\F t).\ee
\end{theorem}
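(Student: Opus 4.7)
The plan is to identify the radial part of $\lM$ by first finding its density with respect to the Riemann measure~$\lL_\el,$ and then applying~\er{2l}. Since $K\ic\r K_1,$ the measure $\lM$ is $K$-invariant, so it possesses a smooth positive $K$-invariant density $\ld:\r Z_\el\to\Rl_+$ with respect to $\lL_\el,$ and
$$d\t\lM(t)=\ld(\F t)\,d\t\lL_\el(t).$$
Combined with the explicit radial form \er{2e} of $\lL_\el$ and the value of $|S_\el|$ from~\er{2m}, the theorem reduces to determining $\ld$ on $\lO_c.$

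To compute $\ld$ on $\lO_c,$ I compare both measures in the holomorphic chart $\Lt$ of Lemma~\ref{2f}, restricted to $v=0.$ By Proposition~\ref{6h}, when $p-a\el$ is even we have $\Lt^*\lM=C\,|N_c(u)|^{2(a(r-\el)-k)}\,d\ll$ on $\r U\xx V$ (with $k=(p-a\el)/2$ and $C>0$ an arbitrary normalizing constant), obtained from $\lM\sim\lT\yi\o\lT.$ On the other hand $\Lt^*\lL_\el=|\det_\Cl\Lt'_{u,v}|^2\,d\ll,$ and at $(t,0)$ with $t\in\lO_c$ Lemma~\ref{2f} reduces to $\Lt'_{t,0}(\lx,\lh)=\lx+D(t,c)\lh,$ a block-diagonal map on $U\op V$ with complex determinant $N_c(t)^{d''_\el/\el}=N_c(t)^{a(r-\el)}$ by~\er{2d} (using $b=0$). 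Taking the ratio yields $\ld(t)=C\,N_c(t)^{-2k}$ on $\lO_c,$ and~hence $\ld(\F t)=C\,N_c(t)^{-k}.$

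Assembling via~\er{2e},
$$d\t\lM(t)=C\,\f{\lp^{d_\el}\,\lG_\el(a\el/2)}{\lG_\el(d/r)\,\lG_\el(ar/2)}\,N_c(t)^{d''_\el/\el-k}\,dt,$$
and the tube-type identities $d''_\el=\el a(r-\el),$ $d'_\el/\el=1+a(\el-1)/2,$ $p=2+a(r-1)$ yield the simplification
$$\f{d''_\el}\el-k=\f a2(r-\el+1)-1=\f{ar}2-\f{d'_\el}\el.$$
Rewriting the Gamma-factor as $|S_\el|/2^{d'_\el}$ via~\er{2m} and fixing $C=1$ through the freedom to rescale~$\lM$ produces the claimed polar decomposition. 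The main subtlety is the odd parity case $p-a\el$ odd, in which no invariant holomorphic $n$-form $\lT$ exists globally so Proposition~\ref{6h} does not directly apply; here one repeats the calculation of Proposition~\ref{6h} with $\lT\yi\o\lT$ replaced by the modulus-squared density $|N_c(u)|^{a(r-\el+1)-2}\,d\ll,$ relying on $|\det_{W^\perp}P_s|^2=N_c(u)^{p-a\el}$ from Lemma~\ref{6g} (well-defined as an integer power of a positive function, even when $k$ is a half-integer), after which the ratio yielding $\ld(\F t)=C\,N_c(t)^{-k}$ proceeds identically.
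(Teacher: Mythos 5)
Your proof is correct, and it takes a genuinely different route from the paper's.

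The paper works directly on the level of homogeneous spaces: it views $\r Z_\el = \r K_1/\r K_c^1$, maps $\lg:\lO_c\times K/K^c\to\r Z_\el$ by $(t,kK^c)\mapsto kt$, and invokes Helgason's integration formula (\cite[Lemma X.1.9]{H}) for the quotient of invariant measures, reducing the problem to a Jacobian $D(t)$ that factors through the group element $P_s^{-1}$ with $P_sc=t$ and $\det P_s=1$; Lemma~\ref{6g} and Lemma~\ref{2c} then give $D(t)$ explicitly, and the substitution $t\mapsto\F t$ finishes. You instead exploit that $\lM$ is absolutely continuous with a smooth $K$-invariant density against the Riemann measure $\lL_\el$, whose radial part is already known from \er{2e}; you read off that density by comparing $\Lt^*\lM$ (Proposition~\ref{6h}, plus the absolute-value modification in the odd-parity case — exactly the remark the paper makes after the theorem) with $\Lt^*\lL_\el$ computed holomorphically at $(t,0)$ via Lemma~\ref{2f} and \er{2d}. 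Both routes bottom out in the same determinant identity $\det_{W^\perp}P_s=N_c(s')^{p-a\el}$ from Lemma~\ref{6g}, but yours trades the abstract quotient-measure formula for the concrete chart $\Lt$ and the already-established radial part of $\lL_\el$, which is somewhat more elementary; the paper's version has the advantage of producing the normalization constant intrinsically (via the $G_c\times K$-invariant measure $N_c(t)^{-d'_\el/\el}\,dt\,ds$), whereas your argument determines $\ld$ only up to the scalar $C$ and must appeal to the ambiguity in normalizing $\lM$ to fix $C=1$. The dependencies you use (Lemma~\ref{2f}, \er{2d}, \er{2e}, \er{2l}, \er{2m}, Lemma~\ref{6g}, Proposition~\ref{6h}) all precede the theorem in the paper, so there is no circularity.
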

\begin{proof} Write $\lO_c=G_c/K_c$. Then $N_c(t)^{-d'_\el/\el}\ dt\ ds$ is the $G_c\xx K$-invariant measure on $\lO_c\xx K/K^c.$ Consider the map $\lg:\lO_c\xx K/K^c\to\r Z_\el=\r K_1/\r K_c^1$ defined by $\lg(t,kK^c):=k t$ for all $k\in K$ and $t\in\lO_c.$ Let
$s'=\F t\in\lO_c$ denote the square-root relative to $\lO_c$. Choose $s''\in\lO_{e-c}$ such that $s=s'+s''\in\lO$ satisfies
$\det P_s=1.$ Then $P_s c=P_{s'}c=t$. By \cite[Lemma X.1.9]{H} we have
$$\I_{\r Z_\el}d\lM(z)\ f(z)=\I_{\lO_c\xx K/K^c}d(\lg^*\lM)(t,\d k)\ f(kt)
=\I_{\lO_c}\f{dt}{N_c(t)^{d'_\el/\el}}\ D(t)\I_{K/K^c}d\d k\ f(k t),$$
where the function $D(t)$ on $\lO_c$ is the determinant of the linear transformation
$$T_t(P_s^{-1})\oc T_{c,t}(\lf)\oc T_{c,id}(P_{s'}\xx id_{S_\el})=P_s^{-1}|_{W^\perp}\oc T_{c,t}(\lf)\oc(P_{s'}\oplus id_{W^\perp})$$
on $W^\perp.$ Since ${\det}_{W^\perp}^\Rl P_s=N_c(s')^{2(p-a\el)}=N_c(t)^{p-a\el}$ by Lemma~\ref{6g} and
$\det P_{s'}=N_c(s')^{2d'_\el/\el}=N_c(t)^{d'_\el/\el},$ it follows with Lemma \ref{2c} that
$2^{d'_\el}\ D(t)=N_c(t)^{a\el-p+2a(r-\el)+d'_\el/\el}\ \det F'(t),$ with $a\el-p+2a(r-\el)=a(r+1-\el)-2.$ Hence
$$\I_{\r Z_\el}d\lM(z)\ f(z)=\I_{\lO_c}dt\ N_c(t)^{a(r+1-\el)-2}\ \det F'(t)\I_K dk\ f(k t).$$
Replacing $t$ by $\F t,$ the assertion follows.
\end{proof}

For $\el=1$ \er{6j} yields
$$\I_{\r Z_1}d\lM(z)\ f(z)=|S_1|\I_0^\oo dt\ t^{ra-1}\I_{S_1}ds\ f(ts).$$
For the spin factor $Z$ we have $ar=2(n-1)$ and hence, in agreement with \cite[Lemma 2.1]{MY},
$$\I_{\r Z_1}d\lM(z)\ f(z)=|S_1|\I_0^\oo dt\ \ t^{2n-3}\I_{S_1}ds\ f(ts).$$

Comparing \er{6j} and \er{2e} we obtain
\begin{corollary} In the tube domain case, the invariant measure $\lM$ and the Riemann measure $\lL_\el$ are related by
$d\lL_\el(z)=d\lM(z)\ \lD_\el(z,z)^k,$ where $2k=p-a\el.$
\end{corollary}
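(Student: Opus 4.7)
The plan is to compare the radial parts of $\lL_\el$ and $\lM$ directly, and then use Lemma~\ref{1c} to identify the resulting power of $N_c$ on $\lO_c$ with the globally defined $K$-invariant function $\lD_\el(z,z)^k$ on $\r Z_\el.$

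First, I would read off the radial part of $\lL_\el$ from \er{2e} and the radial part of $\lM$ from \er{6j}; calling these $d\t\lL_\el(t)$ and $d\t\lM(t)$ on $\lO_c,$ they are
$$d\t\lL_\el(t)=\f{\lp^{d_\el}\ \lG_\el(a\el/2)}{\lG_\el(d/r)\ \lG_\el(ar/2)}\ N_c(t)^{d''_\el/\el}\ dt,\qquad d\t\lM(t)=\f{|S_\el|}{2^{d'_\el}}\ N_c(t)^{ar/2-d'_\el/\el}\ dt.$$
The formula \er{2m} for $|S_\el|$ shows that the scalar prefactors of the two measures agree exactly, so the ratio of the radial parts reduces to
$$\f{d\t\lL_\el(t)}{d\t\lM(t)}=N_c(t)^{d''_\el/\el+d'_\el/\el-ar/2}=N_c(t)^{d_\el/\el-ar/2}.$$

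Second, I would compute $d_\el/\el-ar/2$ in the tube type case. Since $b=0,$ we have $d''_\el=\el a(r-\el)$ and $d'_\el=\el(1+\f a2(\el-1)),$ so $d_\el/\el=1+\f a2(\el-1)+a(r-\el),$ and a short rearrangement gives
$$d_\el/\el-ar/2=1+\tfrac a2(r-\el-1)=\tfrac12(p-a\el)=k,$$
using $p=2+a(r-1)$ \er{1m} in the tube case.

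Third, I would lift this identity from $\lO_c$ to $\r Z_\el$ using Lemma~\ref{1c}: for $t\in\lO_c$ one has $\lD_\el(\F t,\F t)=N_c(\F t)^2=N_c(t),$ so $\lD_\el(\F t,\F t)^k=N_c(t)^k.$ Since both $\lL_\el$ and $\lM$ are $K$-invariant and admit polar decompositions of the form \er{2a} against the same $K$-orbit structure, the pointwise equality $d\t\lL_\el(t)=\lD_\el(\F t,\F t)^k\ d\t\lM(t)$ of radial parts combined with $K$-invariance of $\lD_\el(z,z)$ yields the global identity $d\lL_\el(z)=\lD_\el(z,z)^k\ d\lM(z)$ on $\r Z_\el.$

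There is no substantive obstacle here; the only step requiring attention is the combinatorial simplification $d_\el/\el-ar/2=(p-a\el)/2,$ which uses the tube type hypothesis $b=0$ in an essential way (and without which the formula $2k=p-a\el$ would not be consistent with $\lD_\el(z,z)^k$ being a single-valued function on $\r Z_\el$).
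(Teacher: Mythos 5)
Your proof is correct and follows exactly the route the paper intends: the paper's own proof is just the phrase "Comparing \er{6j} and \er{2e}", and you have filled in precisely that comparison — matching the prefactors via \er{2m}, computing $d_\el/\el-ar/2=(p-a\el)/2$ using $b=0$, and lifting to $\r Z_\el$ via Lemma~\ref{1c} together with $K$-invariance.
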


For domains of tube type, the invariant measure $\lM$ satisfies $\Lt^*\lM=|N_c(u)|^{a(r+1-\el)-2}\ du\ dv.$ This follows from Proposition~\ref{6h} if $p-a\el$ is even, and in general by using the absolute value of the respective determinants in its proof. It follows that
$$|\det \Lt^{\prime*}_{u,v} \Lt'_{u,v}|\ du\ dv=\Lt^*(\lL_\el)=\ls^*(\lM)\ \lD_\el(\Lt(u,v))^k=N_c(u)^{a(r-\el)-k}\ \lD_\el(\Lt(u,v))^k\ du\ dv.$$
Therefore $|\det \Lt^{\prime*}_{u,v} \Lt'_{u,v}|=N_c(u)^{a(r-\el)-k}\ \lD_\el(\Lt(u,v))^k.$ Since $\Lt'(u,v)
=B_{v,-c}\oc(id_U\oplus D_{u,c}|_V)$ this implies $\det(B_{v,-c}^*B_{v,-c})_{W^\perp}=N_c(u)^{-a(r-\el)-k}\ \lD_\el(\Lt(u,v))^k.$ The latter value can in principle be computed.

\section{Hankel operators}
For a measure $d\t\lr(t)$ on $\lO_c$ invariant under the automorphism group $L_c=\operatorname{Aut}(X_c)$, and the associated radial measure $d\lr$ on
$\r Z_\el$, recall that the Hankel operator~$\Ho g$ on the space $H^2_\lr(\r Z_\el)\equiv H^2_\lr$ with symbol
$g\in H^2_\lr(\r Z_\el),$ is~the operator from $H^2_\lr$ into $L^2(\r Z_\el,d\lr)\equiv L^2_\lr$ defined~by
$$\Ho g f:=(I-P)(\o g f),$$
where $P:L^2_\lr\to H^2_\lr$ is the orthogonal projection. This is a densely defined operator, which~is (extends~to~be) bounded e.g.~whenever $g$ is bounded. For~the (analogously defined) Hankel operators on the unit disc $\Dl$ in~$\Cl$ or the unit ball $\Bl^n$ of~
$\Cl^n,\ n\ge2$, criteria for the membership of $\Ho g$ in the Schatten classes~$\SL^p,\ p>0$, were given in the classical papers by Arazy, Fisher and Peetre \cite{AFPD,AFPB}: For $p\le1$ there are no nonzero $\Ho g$ in $\SL^p$ on~$\Dl$, while for $p>1$, $\Ho g\in\SL^p$ if~and only~if $g\in B^p(\Dl)$, the $p$-th order Besov space on~$\Dl$; while on~$\Bl^n,\ n\ge2$, there are no nonzero $\Ho g$ in $\SL^p$ if $p\le 2n$, while for $p>2n$, again $\Ho g\in\SL^p$ if~and only~if $g\in B^p(\Bl^n)$. One~says that there is a cut-off at $p=1$ or $p=2n$, respectively. The~result remains in force also for $\Dl$ and $\Bl^n$ replaced any bounded strictly-pseudoconvex domain in $\Cl^n$, $n\ge1$, with smooth boundary \cite{Lue}. For~bounded symmetric domains of rank $r>1$, the situation changes drastically: there~are no nonzero compact Hankel operators $\Ho g$ at~all, i.e.~$\Ho g$ is compact only if $g$ is constant \cite[Theorem~D]{BBCZ}.

Returning to the Kepler manifolds, consider the unit ball $\c Z$ of $Z$ and the corresponging `Kepler balls' $\c Z_\el:= Z_\el\ui\c Z,$ and let $d\t\lr(t)=|S_\el| 2^{-d'_\el}N_c(t)^{a(r-\el)+b}\,dt$ (cf.~\er{2e}) be~the measure which makes $d\t\lr$ the Lebesgue surface measure on~$\r Z_\el$ (up~to a constant factor). Except for the top-rank tube domain case $\el=r,\ b=0$, which we exclude in this section, Theorem~\ref{1j} shows that $H^2_\lr$ can be viewed as a space of holomorphic functions on the closure~$Z_\el$, which we will do from now~on. In this situation, it will be shown that the~cut-off phenomenon for Hankel operators is exactly the same as described in the previous paragraph: i.e.~for $\el=1$, the cutoff occurs at $p=2d_1$, while for $\el>2$, $\Ho g$~is never a compact operator if it is nonzero.

\begin{theorem} \label{7a} Let $\c Z_\el$ and $d\lr$ be as above and $p\ge1$.
Then the following are equivalent.
\begin{itemize}
\item[(i)] There exists nonconstant $g\in H^2_\lr$ with $\Ho g\in\SL^p$.
\item[(ii)] There exists a nonzero partition $\m\in\Nl^r_+$ such that  $\Ho g\in\SL^p$ for all $g\in\PL_\m$.
\item[(iii)] $\el=1$ and $p>2d_1$.
\item[(iv)] $\Ho g\in\SL^p$ for any polynomial~$g$.
\end{itemize}
\end{theorem}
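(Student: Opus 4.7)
The implications $(iv)\Rightarrow(ii)\Rightarrow(i)$ are immediate, since any nonzero element of $\PL_\m$ with $\m\ne 0$ is a nonconstant polynomial lying in $H^2_\lr.$ The substance is in $(i)\Rightarrow(iii)$ and $(iii)\Rightarrow(iv),$ which I plan to establish via a Peter--Weyl block analysis. The first step is common to both: since $d\lr$ is $K$-invariant and $K$ acts unitarily on $L^2_\lr$ preserving $H^2_\lr,$ the multiplication operators satisfy $U_k M_g U_k^*=M_{k\cdot g},$ hence $\Ho{k\cdot g}=U_k\Ho g\, U_k^*$ and $\|\Ho g\|_{\SL^p}$ depends only on the $K$-orbit of~$g.$ Averaging against the central characters of the isotypic summands $\PL_\m$ and invoking Schur orthogonality yields, for the Peter--Weyl decomposition $g=\S_\m g_\m,$ an estimate $\|\Ho{g_\m}\|_{\SL^p}\le C\|\Ho g\|_{\SL^p}$ with a uniform constant~$C.$ This gives $(i)\Rightarrow(ii)$ and reduces the remaining analysis to a single symbol $g\in\PL_\m.$

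\medskip

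For $(iii)\Rightarrow(iv),$ take $\el=1$ and $p>2d_1.$ By Lemma~\ref{3d}, $H^2_\lr$ collapses to the one-parameter family $\PL_{(n)},\ n\ge 0,$ so it suffices to treat $g\in\PL_{(m)}.$ Using the polar decomposition~\er{2b} and Lemma~\ref{3f}, $\Ho g$ decomposes into finite-rank blocks indexed by~$n,$ whose singular values can be written explicitly in terms of the moments $\ls_n$ from Proposition~\ref{3m} and the sphere integrals evaluated via Lemma~\ref{3f}. Stirling applied to the Gindikin--Koecher moments~\er{3p}--\er{3q}, together with $d_{(n)}\sim cn^{d_1-1},$ produces a Schatten sum whose convergence condition, uniform in $m\ge 1,$ is precisely $p>2d_1.$ Since every polynomial is a finite sum of Peter--Weyl pieces, $(iv)$ follows. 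Conversely, for $\el=1$ the same calculation forces $p>2d_1$ whenever $\Ho g\in\SL^p$ for some nonconstant~$g,$ dispatching the rank-one part of~$(i)\Rightarrow(iii).$

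\medskip

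The remaining $\el\ge 2$ case of $(i)\Rightarrow(iii)$ asserts that $\Ho{g_\m}$ is never compact for $\m\ne 0.$ The key extra structure is that $\Nl^\el_+$ is a multi-parameter lattice when $\el\ge 2.$ Consider the partitions $\n_j=(j,j,\ldots,j,0,\ldots,0)$ of length~$\el.$ The Pieri-type expansion~\er{5i} of $\o{N_\m}\cdot N_{\n_j}$ reveals a component outside $H^2_\lr$ whose norm, after normalization of $N_{\n_j},$ stays bounded below as $j\to\oo;$ this requires the moment ratios $\ls_{\n_j}/\ls_{\n_j+\m'}$ to have a positive limit, which follows from Stirling for the $\el$-fold Gindikin--Koecher function $\lG_\el$ along the diagonal direction (the shifts cancel uniformly in each of the $\el$ factors). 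The corresponding normalized vectors form an infinite orthonormal sequence in $H^2_\lr$ whose images under $\Ho{g_\m}$ do not shrink, contradicting compactness; this reproduces the same mechanism as in~\cite[Theorem~D]{BBCZ} for higher-rank bounded symmetric domains.

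\medskip

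The principal technical obstacle is the Gindikin--Koecher asymptotic bookkeeping: pinning down the sharp rank-one exponent $p=2d_1$ and verifying the non-decay of the moment ratio along the diagonal direction in the higher-rank case. Both reduce to Stirling for $\lG_\el$ and explicit sphere-integral identities from Section~3, but the combinatorics of the Pieri coefficients $C^\m_\n$ from~\er{5i} must be tracked carefully, and the flat and bounded K\"ahler-potential cases of Proposition~\ref{2h} need a uniform treatment.
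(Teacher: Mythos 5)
Your proposal diverges from the paper's proof in both of the hard implications, and while some of the ideas are reasonable in spirit, there are concrete gaps.

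For the averaging step $(i)\Rightarrow(ii)$ you and the paper agree on the strategy, but you omit the key technical ingredient: it is not enough to know that $k\mapsto \Ho{U_k g}$ is bounded in $\SL^p$; one needs it to be \emph{norm-continuous} into $\SL^p$ in order to form the Bochner integral $\I_K dk\,\lc_\m(k)\,U_k\Ho g U_k^*=\Ho{g_\m}$ as an $\SL^p$-valued integral. The paper invokes the lemma of Arazy--Fisher--Peetre (\cite[p.~997]{AFPD}) for this; without it, "averaging against central characters" only gives an operator in the weak closure, not an estimate $\|\Ho{g_\m}\|_{\SL^p}\le C\|\Ho g\|_{\SL^p}$.

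For the rank-one case, your route is genuinely different: you propose a direct Peter--Weyl block computation of the singular values of $\Ho g$ via the moments $\ls_n$ and Stirling. The paper instead appeals to the Engli\v{s}--Zhang Dixmier-trace machinery on strictly pseudoconvex domains \cite{EZ}, extended to the singular variety $Z_1$ through Boutet de Monvel's Toeplitz theory \cite{BDMi}, obtaining $\Ho g\in\SL^{2n,\oo}$ with positive Dixmier trace of $(\Ho g^*\Ho g)^n$. Your approach, if it could be carried out, would be more elementary and self-contained; but the claim that $\Ho g$ with $g\in\PL_{(m)}$ "decomposes into finite-rank blocks indexed by $n$ whose singular values can be written explicitly" is an oversimplification. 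The multiplier $\o g$ maps $\PL_{(n)}$ into $\o{\PL_{(m)}}\cdot\PL_{(n)}$, which as a $K$-module decomposes into several irreducible pieces, and the Hankel operator is the projection onto the anti-holomorphic part; writing down the singular values requires untangling this decomposition and the inner products in each piece. This can in principle be done (it is essentially what \cite{E} does for the spin factor), but as stated your sketch neither pins down the block structure nor shows that the convergence threshold is exactly $p=2d_1$ uniformly in $m$.

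The $\el\ge 2$ argument has a more serious problem. You appeal to the "Pieri-type expansion~\eqref{5i} of $\o{N_\m}\cdot N_{\n_j}$," but \eqref{5i} is the identity $(\lg-1)_\n=\S_\m C_\n^\m(\lg)_\m$ coming from the expansion of $N(e-x)E_e^\m(x)$ --- a product of two \emph{holomorphic} polynomials. It says nothing about the $K$-decomposition of $\o h\cdot f$ with $h,f$ holomorphic, which is the object you actually need, and indeed $\o h f$ is not a polynomial in $z$ at all. The quantity that controls compactness of $\Ho g$ is $\|\o g f\|^2-\|P(\o g f)\|^2$, and this is not governed by moment ratios $\ls_{\n_j}/\ls_{\n_j+\m'}$ alone; you would need the full sesqui-holomorphic Clebsch--Gordan data for $\o{\PL_\m}\otimes\PL_{\n_j}$. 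The paper sidesteps all of this with a much cleaner device: restrict to the polydisc $D=\Dl e_1+\dots+\Dl e_\el\subset\c Z_\el$, observe that $(I-P_D)R_D\Ho g=\Ho{R_Dg}^D R_D$ with $R_D$ surjective, and then invoke Timoney's known result on the polydisc to force $R_Dg$ constant. That reduction is both shorter and avoids any moment asymptotics. Your construction in the spirit of \cite[Theorem~D]{BBCZ} might be repairable, but as written it does not establish non-compactness.
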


\begin{proof} (i)$\implies$(ii) Let $g=\S_\m g_\m$ be the Peter-Weyl expansion of~$g$. For~$k\in K$, consider the rotation operator
$U_k f(z):=f(k^{-1}z)$ for $z\in\c Z_\el.$ Clearly $U_k$ is unitary on $H^2_\lr$ as well as on~$L^2_\lr$, and
$$U_k\Ho g U_k^*=\Ho{U_k g}.$$
Thus also $\Ho{U_k g}\in\SL^p$ for all $k\in K$. Furthermore, the~action $k\mapsto U_k$ is continuous in the strong operator topology, i.e.~$k\mapsto U_k f$ is norm continuous for each $f\in L^2_\lr$. By \cite[Lemma on p.~997]{AFPD} this implies that the map $k\mapsto \Ho{U_k g}$ is~even continuous from $K$ into~$\SL^p$. Consequently, denoting by $\lc_\m$ the character of the representation $\lp_\m$ of $K$ on~$\PL_\m$, the~Bochner integral
$$\I_K dk\ \lc_\m(k)\ U_k\Ho g U^*_k=\Ho{g_\m}$$
also belongs to~$\SL^p$, for~each~$\m$. As~$g$ is nonconstant, there exists nonzero $\m\in\Nl^r_+$ for which $g_\m\ne 0$. Then, again,
$\Ho{U_k g_\m}=U_k\Ho{g_\m}U^*_k\in\SL^p$ for all $k\in K$; since, by~irreducibility, the~translates $U_k g_\m=\pi_\m(k)g_\m$ span all of~$\PL_\m$, (ii)~follows.

(ii)$\implies$(iii) We~first show that $\Ho g$, with $g\in\PL_\m$ nonconstant, can~never be compact if $\el>1$. To~this end, assume that $\el\ge2$, and take the polydisc $D:=\Dl e_1+\dots+\Dl e_\el\subset\c Z_\el$. Denote temporarily by $R_D$ the restriction map
$f\mapsto f|_D$, and by $\Ho h^D:f\mapsto(I-P_D)(\o hf)$ the Hankel operator with symbol $\o h$ on~$D$, where $P_D$ is the Bergman
projection on~$D$. Then
$$(I-P_D)R_D\Ho g f=(I-P_D)R_D(gf-P(gf))=\Ho{R_D g}^D R_D f-(I-P_D)R_D P(gf)=\Ho{R_D g}^D R_D f,$$
because, since a~restriction to $D$ of a holomorphic function is again holomorphic, $(I-P^D) R_D P=0$. Thus
$(I-P_D)R_D\Ho g=\Ho{R_D g}^D R_D$, hence if $\Ho g$ is compact, then~so is $\Ho{R_D g}^D R_D$. Since $\c Z\ic D\xx\Cl^{d-\el}$
(see~\cite[Theorem~2.5~(ii)]{Timo}), the~restriction operator $R_D$ is surjective (a~holomorphic function in the Bergman space of $D$ extends to a function in $H^2_\lr$ by making it independent of the superfluous $d-\el$ variables; here we are using the fact that
$d\lr$ was taken to be the Lebesgue measure). Thus $\Ho{R_D g}^D$ is compact. By~the well-known result recalled in the previous paragraph (which for the polydisc in fact goes back to \cite[Proposition~4.1]{Timo}), $R_D g$ must be constant. This contradicts the fact that $R^D\PL_\m$ contains, for~instance, the~nonconstant function $z_1^{m_1}\dots z_\el^{m_\el}$ on~$D$.

The~hypothesis (ii) therefore implies that necessarily $\el=1$. In~that case, $\c Z_1$ is just the Euclidean unit ball
$\{z\in Z_1:\|z\|<1\}$ of~$Z_1$, in~particular, it~is a strictly pseudoconvex domain with smooth boundary
(albeit with a singularity at the origin). Now~a general theory of Hankel operators on smoothly bounded strictly pseudoconvex domains in $\Cl^n$ with symbols smooth on the closure of the domain --- even not necessarily anti-holomorphic as in this paper
--- was~developed by one of the authors and G.~Zhang in \cite{EZ}, showing in particular that $\Ho g$, with $g$ smooth on the closure, belongs to the weak Schatten ideal $\SL^{2n,\oo}$, and furthermore, $(\Ho g^*\Ho g)^n$ belongs to the Dixmier class and its Dixmier trace is given, in~the particular case of the spherical boundary we have here, just by an integral over the `unit sphere'
$\{z\in Z_1:\|z\|=1\}$ of $|\o\dl_b\o g|^2$ (up~to~a constant factor), where $\o\dl_b$ is the boundary Cauchy-Riemann operator.
(See \cite[Theorem~11 and formula (24)]{EZ} for the details.) In~particular, if~$g$ is holomorphic and nonconstant, then $\o\dl_b\o g$ does not vanish identically, hence the Dixmier trace of $(\Ho g^*\Ho g)^n$ is positive, implying in particular that $\Ho g$ cannot belong to any $\SL^p$ with $p\le 2n$. Finally, the~proofs in \cite{EZ} relied on the theory, due~to Boutet de Monvel and Guillemin, of~generalized Toeplitz operators (with pseudo-differential symbols), which remains in force also if domains in $\Cl^n$ are replaced by domains in complex varieties which can have singularities in the interior but not on the boundary, cf.~\oldS 2.i~in~\cite{BDMi}; in~particular, this theory --- and, hence, also~all the results in~\cite{EZ} --- remain in force also in our situation when the domain is the unit ball in the complex variety $Z_1$ with the sole singularity at the origin. Altogether, it~thus follows that if $\el=1$ and $g\in\PL_\m$ is nonconstant, then $\Ho g\in\SL^{2n,\oo}\ic\SL^p$ for all $p>2n$, $n:=\dim_\Cl\r Z_1$, but $\Ho g\notin\SL^p$ for any $p\le2n$. Thus (iii) holds. (The~beginning of the proof of this implication was inspired by the proof of \cite[Proposition~1]{CuSa}.)

(iii)$\implies$(iv) We~have seen in the proof of (ii)$\implies$(iii) that for $\el=1$ and $p>2d_1$, $\Ho g\in\SL^p$ even for any $g$ smooth on the closure of~$\c Z_1$. In~particular, since polynomials are smooth on all of~$Z$, this will certainly be the case for any polynomial~$g$.

Since the implication (iv)$\implies$(i) is trivial, this completes the proof of the theorem.
\end{proof}

\end{document}